\documentclass[12pt]{amsart}

\usepackage{amsmath,amsfonts,amssymb,graphicx,hyperref, amsthm,mathabx,amsrefs,geometry}
\hypersetup{colorlinks=true,citecolor=blue,linkcolor=blue,urlcolor=blue,pdfstartview=FitH,bookmarksdepth=3}
\usepackage[noabbrev, nameinlink]{cleveref}
\usepackage{enumitem}
\setlist[enumerate]{label=\arabic*.}

\newcommand{\paren}[2][]{\expandafter\ifx\expandafter\relax\detokenize{#1}\relax
  \left( #2 \right)
  \else
  \left( #2 \right)^{#1}
  \fi}
\newcommand{\set}[1]{\ensuremath{\mathopen{}\left\{ #1 \right\}\mathclose{}}}

\newcommand{\innerprod}[1]{\ensuremath{\mathopen{}\left< #1 \right>\mathclose{}}}

\newcommand{\abs}[1]{\ensuremath{\mathopen{}\left| #1 \right|\mathclose{}}}
\newcommand{\norm}[1]{\ensuremath{\mathopen{}\left\| #1 \right\|\mathclose{}}}

\newcommand{\summod}[1]{\ensuremath{\,(\mathrm{mod}\,#1)}}

\newcommand{\Matrix}[1]{\begin{pmatrix}#1\end{pmatrix}}

\newcommand\revdots{\mathinner{\mkern1mu\raise1pt\vbox{\kern7pt\hbox{.}}\mkern2mu\raise4pt\hbox{.}\mkern2mu \raise7pt\hbox{.}\mkern1mu}}

\newcommand{\piecewise}[1]{\left\{\begin{matrix}#1\end{matrix}\right.}

\newcommand{\If}{\mbox{if }}
\newcommand{\Otherwise}{\mbox{otherwise}}

\renewcommand{\Re}{{\mathop{\mathgroup\symoperators Re}}}
\renewcommand{\Im}{{\mathop{\mathgroup\symoperators Im}}}
\newcommand{\sgn}{{\mathop{\mathgroup\symoperators \,sgn}}}

\newcommand{\Max}[1]{\ensuremath{\max \set{#1}}}
\newcommand{\Min}[1]{\ensuremath{\min \set{#1}}}

\newcommand{\Z}{\mathbb{Z}}
\newcommand{\R}{\mathbb{R}}
\newcommand{\N}{\mathbb{N}}

\newcommand{\C}{\mathbb{C}}

\newcommand{\wbar}[1]{\overline{#1}}
\newcommand{\wtilde}[1]{\widetilde{#1}}
\newcommand{\what}[1]{\widehat{#1}}
\newcommand{\wcheck}[1]{\widecheck{#1}} 
\newcommand{\BigO}[2][]{O_{#1}\paren{#2}}
\newcommand{\e}[1]{e\paren{#1}}
\newcommand{\pFqName}[2]{{_{#1}F_{#2}}}
\newcommand{\pFq}[5]{\pFqName{#1}{#2}\paren{\begin{matrix}#3;\\#4;\end{matrix}\,#5}}

\newcommand{\trans}[1]{{#1}^T}

\newcommand{\Poch}[2]{\paren{#1}_{#2}}

\DeclareMathOperator*{\res}{res}
\DeclareMathOperator{\diag}{diag}


\theoremstyle{plain} 
\newtheorem{thm}{Theorem}
\newtheorem*{thm*}{Theorem}
\newtheorem{cor}[thm]{Corollary}
\newtheorem*{cor*}{Corollary}
\newtheorem{lem}[thm]{Lemma}
\newtheorem*{lem*}{Lemma}

\newtheorem*{prop*}{Proposition}

\newtheorem*{conj*}{Conjecture}

\newtheorem*{ax*}{Axiom}



\theoremstyle{definition}
\newtheorem*{defn*}{Definition}

\theoremstyle{remark}
\newtheorem*{rem*}{Remark}

\newtheorem*{prob*}{Problem}

\crefname{thm}{Theorem}{Theorems}
\crefname{lem}{Lemma}{Lemmas}
\crefname{prop}{Proposition}{Propositions}
\crefname{cor}{Corollary}{Corollaries}
\crefname{conj}{Conjecture}{Conjectures}
\crefname{defn}{Definition}{Definitions}
\crefname{prob}{Problem}{Problems}

\newif\ifshowTODOs
\showTODOstrue
\newcommand{\TODO}[1]{\ifshowTODOs{\ifmmode \text{\color{blue}TODO: #1} \else {\ {\color{blue}TODO: #1}} \fi}\fi}
\newcommand{\pTODO}[1]{\ifshowTODOs{\ifmmode \text{\color{blue}(TODO: #1)} \else {\ {\color{blue}(TODO: #1)}} \fi}\fi}

\DeclareMathAlphabet{\mathcalligra}{T1}{calligra}{m}{n}

\newcommand{\pFqStarName}[2]{{_{#1}F_{#2}^*}}
\newcommand{\pFqStar}[5]{\pFqStarName{#1}{#2}\paren{\begin{matrix}#3;\\#4;\end{matrix}\,#5}}
\newcommand{\pFqDaggerName}[2]{{_{#1}F_{#2}^\dagger}}
\newcommand{\pFqDagger}[5]{\pFqDaggerName{#1}{#2}\paren{\begin{matrix}#3;\\#4;\end{matrix}\,#5}}

\renewcommand{\ggg}{>\!\!>\!\!>}

\newcommand{\Weyl}{\mathcal{W}}

\usepackage{subcaption}
\usepackage{xspace}

\geometry{margin=1in}

\newcommand{\mlb}{\\ \hspace*{0.3in}}
\newcommand{\mlbb}{\\ \hspace*{0.6in}}
\newcommand{\mlbbb}{\\ \hspace*{0.9in}}
\newcommand{\dspecmu}{\xspace d_\text{spec}\mu}

\title[Bessel functions on $GL(n)$, II]{Bessel functions on $GL(n)$, II - the case $n=4$}
\author{Jack Buttcane}
\date{22 March 2025}
\address{Department of Mathematics \& Statistics, 5752 Neville Hall, Orono, ME 04469, USA}
\email{jack.buttcane@maine.edu}
\thanks{During the time of this research, the author was supported by NSF grant DMS-2348653.}

\begin{document}

\begin{abstract}
The purpose of this article is to verify the conjectures of the previous paper in the particular case of $GL(4)$.
We accomplish this in general, but observe two failures of the conjectures:
First, that the Strong Interchange of Integrals conjecture is perhaps false for a single Weyl element $w_{2,2}$, though we prove the Weak Interchange of Integrals still holds.
Second, again for a single Weyl element $w_{2,1,1}$ and its conjugate $w_{1,1,2}$, it appears that the space of solutions to the Bessel differential equations may not be spanned by the Frobenius series solutions.
We discuss what refinements, namely to the Asymptotics Theorem, would be necessary to uniquely identify the Bessel functions for such Weyl elements, and prove them in the exceptional cases for $GL(4)$.
\end{abstract}

\subjclass[2020]{Primary 11F72; Secondary 11F55, 33C70, 42B37}

\maketitle

\section{Introduction}

In this paper, we continue the study of the $GL(n)$ Bessel functions in the form initiated in \cite{GLnI}.
The particular functions we are concerned with are those that appear in the Kuznetsov (aka relative trace) formulas for $GL(n)$.
The formulas generalize those of Kuznetsov on $GL(2)$ \cite{Kuz01} and the kernel functions appearing the integral transforms generalize in turn the classical $J$- and $K$-Bessel functions.

The history of the Kuznetsov formulas on $GL(n)$ starts with Li's formula appearing in \cite{Gold01} and several specializations to $GL(3)$ appearing in \cite{Val01,GoldKont02} along with a partial inversion appearing in the author's thesis \cite{MeThesis}, but the first to tackle the $GL(3)$ formula by directly studying the kernel functions was \cite{SpectralKuz}.
This latter was the approach employed in \cite{GLnI}, which gives a series of conjectures on generalizing the author's method from $GL(3)$ to $GL(n)$, and the current paper proves these conjectures for $GL(4)$.

This paper was intended to be a complete, worked example of the method which would help illuminate the general case, and it is largely successful with a few caveats, but a highly interesting discovery arose along the way:
In \cite[Sect. 5.5.1]{GLnI}, the author made a blatant guess as to an integral representation for the Bessel functions, and in this paper we see that guess is, in fact, correct; even stronger, the constant of proportionality is one.
This opens the possibility of simply constructing the Kuznetsov formula in such a way that those integrals directly occur as the kernel functions, and the author intends to pursue this approach in a subsequent paper.
Such a method would skip past the interchange of integrals (see below), differential equations, and power series representations, which are certainly desirable in their own right, and proceed directly to the integral representations, which are the most important for applications.

The caveats encountered are primarily minor exceptions to the conjectures of \cite{GLnI}, which are described below, but one relates to the purity of mathematical rigor:
The author was able to work through the proof of the Interchange of Integrals Conjecture by hand in every case but the long Weyl element (which was simply too long, even for the author), but the resulting proofs are trees of cases so involved that the author considers it to be unpublishable (certainly the author would not inflict the checking of it on a referee).
Instead, we rely non-trivially on a computer algebra package to reduce a certain Boolean combination of linear inequalities to just one or two particular vectors that need to be manually checked, and this corresponds to a reduction from over forty pages to just six (including the description of the algorithm) in \cref{sect:IoI}.
The expression to be reduced corresponds precisely to the tree of cases alluded to above, but a human (i.e. the author) would approach the problem by grouping common terms to simplify later cases, while the computer-based approach is a brute-force simplification of the entire expression at once.

\subsection{The Kuznetsov formula}
The Kuznetsov formula for a discrete subgroup $\Gamma \subset GL(4,\R)$ is an equality between a sum of the Fourier-Whittaker coefficients of automorphic forms in some spectral family and sums of generalized Kloosterman sums.
For integral diagonal matrices $m$ and $n$, the Kuznetsov formula has the rough form
\begin{align*}
	& \sum_\varphi \rho_\varphi(n) \wbar{\rho_\varphi(m)} f(\mu_\varphi)+\text{ Eisenstein series terms} \\
	&= \sum_{w\in W} \sum_c \frac{S_w(m,n,c)}{\abs{c_1 c_2 c_3}} H_w(f, mc wn^{-1} w^{-1}),
\end{align*}
where $\varphi$ runs through the family of cusp forms in $L^2(\Gamma\backslash GL(4,\R)/\R^+)$ sharing a common minimal weight $\Lambda$ and discrete spectral parameters $\delta$, $W$ is the Weyl group of $GL(4)$, and $c$ runs through the diagonal part of the Bruhat decomposition of $\Gamma$.
Here, $\mu_\varphi$ are the continuous spectral parameters of $\varphi$ and $f(\mu)$ is some nice test function, $\rho_\varphi(m)$ is the Fourier-Whittaker coefficient of $\varphi$ at $m$, $S_w(m,n,c)$ is the Kloosterman sum, and $H_w(f,y)$ is an integral transform of $f$.

A theorem of Friedberg \cite{Friedberg01} implies that only the ``relevant'' Weyl elements $w \in W^\text{rel} \subset W$, which are reverse-block diagonal matrices formed from identity matrices in the form
\begin{align}
\label{eq:RelevantWeylEles}
	w_{r_1,\ldots,r_\ell} = \Matrix{&&I_{r_1}\\&\revdots\\I_{r_\ell}}, \qquad r_1+\ldots+r_\ell = 4,
\end{align}
contribute to the Kuznetsov formula, and similarly, we define the Kloosterman sums to be zero unless a certain ``compatibility'' condition is met, which corresponds to the argument of $H_w(f,\cdot)$ lying in a subspace $Y_w$ of the diagonal matrices.

The most technically difficult theorem of this paper is that each $H_w(f,y)$ can be written as a kernel integral transform of $f$, and we call the kernel functions $K_w(y,\mu,\delta)$ the $GL(4)$ Bessel functions.
As this theorem is also somewhat technical to state, we wait until \cref{sect:Conjectures} to do so.

We tend to suppress commas in the subscript and double subscripts, so, e.g., $w_{121} := w_{1,2,1}$ and $K_{121}(y,\mu,\delta) := K_{w_{1,2,1}}(y,\mu,\delta)$.
In this notation, the relevant Weyl elements for $GL(4)$ are
\[ w_4=I, w_{13}, w_{31}, w_{22}, w_{112}, w_{121}, w_{211}, w_{1111} =: w_l. \]
From the symmetry \cite[Prop. 2]{GLnI} of the Bessel function $K_w(y,\mu,\delta)$ under the involution $w \mapsto w^\iota$ with $\iota:G \to G$ defined by $g^\iota = w_l \trans{\paren{g^{-1}}} w_l$, it suffices to consider the elements
\[ w_{31}, w_{22}, w_{121}, w_{211}, w_{1111}. \]
Note that $K_I(y,\mu,\delta)=1$, so there is no further need to study this function.

\subsection{Results}
The end goal of this paper is to provide usable integral and series representations of the $GL(4)$ Bessel functions $K_w(y,\mu,\delta)$.
Applications of the $GL(4)$ Kuznetsov formulas will require bounds or asymptotics for the Bessel functions and their integral transforms, which we leave to future papers.
In the current paper, we first show that the integral kernels in the $GL(4)$ Kuznetsov formula exist as functions, see \cref{sect:IoI}.
That is, we prove the Interchange of Integrals Conjecture of \cite{GLnI} for $n=4$.

Next, we construct the differential equations satisfied by the Bessel functions, see \cref{sect:DEPS}.
\begin{thm}
The Bessel functions at $w=w_{31},w_{22},w_{121},w_{211},w_l$ are annihilated by the differential operators \eqref{eq:w31DE}, \eqref{eq:w22DE}, \eqref{eq:w121DE1} and \eqref{eq:w121DE2}, \eqref{eq:211DEs1} and \eqref{eq:211DEs2}, and \eqref{eq:1111DEs1}-\eqref{eq:1111DEs3}, respectively.
\end{thm}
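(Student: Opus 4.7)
The plan is to derive each listed differential equation by pushing bi-invariant differential operators through the integral representation of $K_w$ established in \cref{sect:IoI}. That integral representation exhibits $K_w(y,\mu,\delta)$ as an integral over a subgroup of $U^-$ (dictated by $w$) of the completed Jacquet--Whittaker function $W^*_\mu$ translated by elements of the form $y\bar u w$, multiplied by an additive character $\psi(\bar u)$ and explicit factors depending on $y$. The crucial input is that $W^*_\mu$ is a joint eigenfunction of the entire algebra $\mathcal{Z}$ of bi-invariant differential operators on $GL(4,\R)$, with eigenvalues fixed by $\mu$ through the Harish--Chandra isomorphism.

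For each of the five Weyl elements, I would proceed in three steps. First, fix a set of generators of $\mathcal{Z}$ modulo the centre of $GL(4,\R)$; three independent generators suffice (Casimir together with two higher invariants of degrees $3$ and $4$). Second, compute the radial part of each generator $Z$ in Bruhat coordinates $g = y \bar u w$, writing it as $Z = D_y + R$ where $D_y$ carries only $y$-derivatives, possibly with $\bar u$-dependent coefficients, and $R$ carries at least one $\bar u$-derivative. Third, insert the eigenvalue relation $Z\cdot W^*_\mu = \chi_\mu(Z)\,W^*_\mu$ under the integral and apply integration by parts in $\bar u$ to transfer the $\bar u$-derivatives in $R$ onto $\psi(\bar u)$; the resulting character factors combine with the $\bar u$-dependent coefficients of $D_y$ and integrate to give a pure differential operator in $y$, which then coincides with $\chi_\mu(Z)$ times the identity. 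Expanding and rearranging yields the operators cited in \eqref{eq:w31DE}, \eqref{eq:w22DE}, \eqref{eq:w121DE1}, \eqref{eq:w121DE2}, \eqref{eq:211DEs1}, \eqref{eq:211DEs2}, and \eqref{eq:1111DEs1}--\eqref{eq:1111DEs3}.

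The main obstacle will be the bookkeeping for the degenerate Weyl elements. For $w \ne w_l$ the cell $Aw U$ is not open in $G$ and $Y_w$ has strictly smaller dimension than $A$, so some generators of $\mathcal{Z}$ produce radial parts whose $y$-derivative component vanishes on $Y_w$ (a compatibility relation, not a differential equation). To recover the correct count---one equation each for $w_{31}$ and $w_{22}$, two each for $w_{121}$ and $w_{211}$, three for $w_l$---one must select linear combinations of the generators whose leading symbol on $Y_w$ is non-trivial and whose lower-order terms still simplify after the integration by parts. For $w_{121}$ and $w_{211}$ I would additionally check that the two listed operators are functionally independent by inspecting their principal symbols at a generic point of $Y_w$. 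The long element case $w_l$ is the most uniform: the Bruhat cell is open and dense, all three generators contribute, and the computation reduces to the standard radial-part calculation for the fundamental Bessel function of Wallach adapted to the $GL(4,\R)$ torus.

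As an independent cross-check, I would verify each operator by passing to Mellin coordinates: since $K_w$ admits a Mellin--Barnes representation inherited from the spectral integral of $f(\mu)$, each differential equation on the $y$-side must correspond to an algebraic identity on the Mellin side expressing the Harish--Chandra eigenvalue, and this identity is straightforward to confirm directly from the explicit form of $\chi_\mu(Z)$.
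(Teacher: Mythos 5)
Your high-level strategy is the one the paper uses: exploit the fact that the Whittaker function is a joint eigenfunction of $\mathcal{Z}$, feed that through the defining integral for $K_w$, and convert the bi-invariant operators to differential operators in $y$ alone. But the paper does not actually proceed by integration by parts in $\bar u$. Instead it extends $K_w$ from $Y_w$ to $G_w := U(\R)\,Y_w\,(w\wbar{U}_w(\R)w^{-1})$ via the equivariance \eqref{eq:KwSymmetry}, computes the matrix of $E_{ij}$ operators in the Bruhat coordinates $g = x y \trans{u}$ explicitly, and then acts directly on a test function encoding exactly that equivariance, evaluated at $x=u=I$. The eigenfunction property of $W_\sigma$ together with the already-proved Interchange of Integrals then transfers the annihilation to $K_w$. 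Your integration-by-parts framing is a dual, workable description of the same mechanism, but note the argument is $ywug$ with $u$ ranging over a subgroup $\wbar{U}_w \subset U$ (upper-triangular), not $U^-$, and the claim that the resulting $y$-operator ``coincides with $\chi_\mu(Z)$ times the identity'' is not quite right for compound operators: the operators used are not bi-invariant generators $Z$ themselves but elements $\sum_i a_i X_i \circ (\Delta_i - \lambda_i)$ with $X_i \in \C[\Delta_2,\ldots,\Delta_4,E_{11},\ldots,E_{33}]$, and what one proves is that such a combination vanishes on $K_w$, not that the radial part of a single $Z$ produces $\chi_\mu(Z)$.

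The real work, which your proposal identifies but does not carry out, is the choice of these combinations for $w \ne w_l$. You correctly observe that naive application of a single generator produces $\bar u$-dependent coefficients or derivatives transverse to $Y_w$; the difficulty is exhibiting enough combinations whose restriction to $G_w$-equivariant functions kills exactly the transverse terms while leaving a nondegenerate $y$-operator. That discovery is where all the content is, and ``select linear combinations whose leading symbol on $Y_w$ is non-trivial'' does not by itself produce, e.g., the operator $4(\Delta_4-\lambda_4)+2(3+2E_{44})\circ(\Delta_3-\lambda_3) -(4 E_{44}\circ(3+E_{44})+2\Delta_2+2\lambda_2-1)\circ(\Delta_2-\lambda_2)$ for $w_{31}$, with its specific coefficients in $E_{44}$, $\Delta_2$, $\lambda_2$. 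Also note that the number of independent equations produced for each Weyl element ($1,1,2,2,3$) does not follow from counting generic functionally independent operators: it reflects the dimension of $Y_w$ and the structure of $W_w$. Finally, the suggested Mellin-side cross-check is not an independent verification: the Mellin--Barnes integrals in \cref{sect:IntRepns} are constructed a posteriori to match the power series solutions of these very differential equations, so verifying the equations against those integrals is circular.
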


Also in \cref{sect:DEPS}, we find the power series (Frobenius series) solutions in the principal series case.
\begin{thm}
Using the usual (terminating) $\pFqName{p}{q}$ hypergeometric series, the power series solutions $J_w(y,\mu)$ to the Bessel differential equations of the previous theorem are given by
\begingroup
\allowdisplaybreaks
\begin{align}
\label{eq:Jw31}
	J_{31}(y,\mu) =& \abs{y_3}^{\frac{3}{2}-\mu_4} \sum_{m=0}^\infty a_{31,m}(\mu) y_3^m, \\
\label{eq:Jw22}
	J_{22}(y,\mu) =& \abs{y_2}^{2+\mu_1+\mu_2} \sum_{m=0}^\infty a_{22,m}(\mu) y_2^m, \\
\label{eq:Jw121}
	J_{121}(y,\mu) =& \abs{y_1}^{\frac{3}{2}+\mu_1} \abs{y_3}^{\frac{3}{2}-\mu_4} \sum_{m_1=0}^\infty \sum_{m_2=0}^\infty a_{121,m_1,m_2}(\mu) y_1^{m_1} y_3^{m_2}, \\
\label{eq:Jw211}
	J_{211}(y,\mu) =& \abs{y_2}^{2+\mu_1+\mu_2} \abs{y_3}^{\frac{3}{2}-\mu_4} \sum_{m_1=0}^\infty \sum_{m_2=0}^\infty a_{211,m_1,m_2}(\mu) y_2^{m_1} y_3^{m_2}, \\
\label{eq:Jw1111}
	J_{1111}(y,\mu) =& \abs{y_1}^{\frac{3}{2}+\mu_1} \abs{y_2}^{2+\mu_1+\mu_2} \abs{y_3}^{\frac{3}{2}-\mu_4} \sum_{m_1=0}^\infty \sum_{m_2=0}^\infty \sum_{m_3=0}^\infty a_{1111,m_1,m_2,m_3}(\mu) y_1^{m_1} y_2^{m_2} y_3^{m_3},
\end{align}
\endgroup
with coefficients
\begingroup
\allowdisplaybreaks
\begin{align*}
	a_{31,m}(\mu) :=& \frac{(16\pi^4)^{-\mu_4} (-16\pi^4)^m}{m! \, \Gamma(1+\mu_1-\mu_4+m) \Gamma(1+\mu_2-\mu_4+m) \Gamma(1+\mu_3-\mu_4+m)}, \\
	a_{22,m}(\mu) :=& \frac{(16\pi^4)^{\mu_1+\mu_2+m} \Gamma(1+2\mu_1+2\mu_2+2m)}{m! \, \Gamma(1+2\mu_1+2\mu_2+m) \Gamma(1+\mu_1-\mu_3+m)} \\
	& \qquad \times \frac{1}{\Gamma(1+\mu_1-\mu_4+m) \Gamma(1+\mu_2-\mu_3+m) \Gamma(1+\mu_2-\mu_4+m)}, \\
	a_{121,m}(\mu) =& \frac{(8\pi^3)^{\mu_1-\mu_4} (8\pi^3 i)^{m_1} (-8\pi^3 i)^{m_2} \Gamma(1+\mu_1-\mu_4+m_1+m_2)}{m_1! \, m_2! \, \Gamma(1+\mu_1-\mu_2+m_1) \Gamma(1+\mu_1-\mu_3+m_1)\Gamma(1+\mu_1-\mu_4+m_1)} \\*
	& \qquad \times \frac{1}{\Gamma(1+\mu_1-\mu_4+m_2) \Gamma(1+\mu_2-\mu_4+m_2) \Gamma(1+\mu_3-\mu_4+m_2)}, \\
	a_{211,m}(\mu) =& \frac{(2\pi)^{3\mu_1+3\mu_2-2\mu_4} (-8\pi^3 i)^{m_1} (4\pi^2)^{m_2}}{m_1! \, m_2! \, \Gamma\paren{1+\mu_1-\mu_4+m_1} \Gamma\paren{1+\mu_2-\mu_4+m_1} \Gamma\paren{1+\mu_3-\mu_4+m_2}} \\*
	& \qquad \times \frac{1}{\Gamma\paren{1+\mu_1-\mu_3} \Gamma\paren{1+\mu_2-\mu_3}} \\*
	& \qquad \times \pFq32{ -m_1,1+2\mu_1+2\mu_2+m_1,\mu_4-\mu_3-m_2}{1+\mu_1-\mu_3, 1+\mu_2-\mu_3}{1}, \\
	a_{1111,m}(\mu) =& \frac{(2\pi)^{3\mu_1+\mu_2-\mu_3-3\mu_4+2m_1+2m_2+2m_3}}{m_1! \, m_2! \, m_3! \Gamma\paren{1+\mu_3-\mu_4} \Gamma\paren{1 + \mu_1 - \mu_2+m_1} \Gamma\paren{1 + \mu_1 - \mu_3+m_1}} \\*
	& \qquad \times \frac{\Gamma\paren{1 + 2\mu_1 + 2\mu_2+m_2 + m_3} \Gamma\paren{1 + \mu_1 - \mu_3+m_1 + m_2}}{\Gamma\paren{1 + 2\mu_1 + 2\mu_2+m_2} \Gamma\paren{1 + \mu_1 - \mu_3+m_2}} \\*
	& \qquad \times \frac{1}{\Gamma\paren{1 + \mu_2 - \mu_3+m_2} \Gamma\paren{1 + \mu_1 - \mu_4+m_3} \Gamma\paren{1 + \mu_2 - \mu_4+m_3}} \\*
	& \qquad \times \pFq43{-m_1 - \mu_1 + \mu_3, -m_2 - \mu_2 + \mu_3, -m_2 - \mu_1 + \mu_3, -m_3}{1 + \mu_3 - \mu_4, -m_1 - m_2 - \mu_1 + \mu_3, -m_2 - m_3 - 2 (\mu_1 + \mu_2)}{1}.
\end{align*}
\endgroup
\end{thm}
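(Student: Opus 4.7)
The approach is a Frobenius method adapted to several variables. For each relevant Weyl element $w$, I would make the ansatz
\[
J_w(y,\mu) = \prod_{j} \abs{y_{i_j}}^{\alpha_j(\mu)} \sum_{m_1,\ldots,m_k\ge 0} a_{w,m}(\mu)\, y_{i_1}^{m_1}\cdots y_{i_k}^{m_k},
\]
where the product runs over exactly those coordinates on which the DE of the previous theorem depends: $y_3$ for $w_{31}$, $y_2$ for $w_{22}$, $(y_1,y_3)$ for $w_{121}$, $(y_2,y_3)$ for $w_{211}$, and $(y_1,y_2,y_3)$ for $w_l=w_{1111}$. Substituting this ansatz into each DE and looking at the lowest-order terms gives an indicial equation whose roots are permutations of the entries of $\mu$; fixing the ``principal'' root (the branch giving a true power series, with no logarithmic contributions) recovers precisely the exponents displayed in \eqref{eq:Jw31}--\eqref{eq:Jw1111}.

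First I would dispose of $w_{31}$ and $w_{22}$, both univariate. Substituting the ansatz reduces each DE to a one-term recurrence between $a_m$ and $a_{m+1}$ whose ratio is a rational function of $m$: for $w_{31}$ a ratio with three shifted linear factors in the denominator, producing a $\pFqName{0}{3}$-type coefficient, and for $w_{22}$ an extra numerator factor $\Gamma(1+2\mu_1+2\mu_2+2m)/\Gamma(1+2\mu_1+2\mu_2+m)$ reflecting the special (degenerate) structure of the $w_{22}$ DE. In both cases the recurrence is solved explicitly; the overall multiplicative constants (the powers of $16\pi^4$ and the $m$-independent $\Gamma$-factors) are then pinned down by matching against the integral representation of $K_w$ from \cref{sect:IoI} at a convenient specialization of $\mu$, or equivalently by comparing leading-order behavior as $y\to 0$.

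For the two-variable cases, the pair of DEs translates into a system of two coupled recurrences in $(m_1,m_2)$, and for the long element $w_l$ into a system of three coupled recurrences in $(m_1,m_2,m_3)$. For $w_{121}$ the recurrences decouple after a change of variables, yielding a product of two $\pFqName{0}{2}$-like factors in $y_1$ and $y_3$ respectively, which is exactly the claimed form. For $w_{211}$ the recurrences do not fully decouple: one of them is summed in closed form, while the other forces at each level a terminating balanced inner sum, which I would recognize as the $\pFqName{3}{2}(1)$ displayed in $a_{211,m}$. The analogous argument for $w_{1111}$ yields the terminating $\pFqName{4}{3}(1)$ at each level.

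The main obstacle is the identification and verification of the terminating hypergeometric coefficients for $w_{211}$ and especially $w_{1111}$. Computing the first several coefficients and guessing the closed form is routine; verifying that the guessed closed form satisfies \emph{every} recurrence in the coupled system requires invoking standard contiguous relations for $\pFqName{3}{2}$ and $\pFqName{4}{3}$ (in the spirit of Thomae/Whipple transformations), and this check, while mechanical, is the most intricate part of the argument. Once verified, uniqueness of the Frobenius solution with prescribed leading exponent forces $J_w$ to be a constant multiple of the stated series, with the constant determined as above.
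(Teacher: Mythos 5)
Your Frobenius-method framework is exactly the one the paper uses for $w_{31}$, $w_{22}$, $w_{121}$ and (in part) for $w_{211}$: substitute a multi-power series, read off the indicial roots, translate the DEs into recurrences for the coefficients, and for the higher Weyl elements verify a guessed closed form against those recurrences via contiguous relations. For $w_{31}$ and $w_{22}$ the recurrence is indeed one-term; for $w_{121}$ the paper decouples the system by the substitution written below \eqref{eq:Jw121}; for $w_{211}$ the paper verifies \eqref{eq:a211def} against the two recurrences \eqref{eq:a211recur1}--\eqref{eq:a211recur2} using the $\pFqName{3}{2}(1)$ contiguous relations \eqref{eq:3F2recur1}--\eqref{eq:3F2recur2}, exactly as you anticipate.

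There are, however, two places where your route diverges from the paper's, and one of them conceals a nontrivial difficulty you should not wave off as ``routine.'' First, for $w_{211}$ the paper explicitly records that the closed form $a^*_{211,m}$ was \emph{not} obtained by computing initial coefficients and guessing; it was read off from the Mellin--Barnes integral of \cref{sect:BesselMBProof} and only afterwards verified to satisfy the recurrences. The recurrences themselves involve a two-variable coupled system whose solution is a terminating $\pFqName{3}{2}(1)$ possessing three distinct-looking representations \eqref{eq:a211def}--\eqref{eq:c211def}; recovering any of these by inspection is not automatic. Second, for $w_{1111}$ the paper does \emph{not} derive a recurrence directly from the three Casimir differential equations \eqref{eq:1111DEs1}--\eqref{eq:1111DEs3}, which would be a genuinely laborious calculation. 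Instead it invokes Hashizume's recurrence for the spherical Whittaker coefficients (via the relation established in \cite[Sect.\ 12.1]{GLnI} between the long-element Bessel function and the Whittaker function) and then reduces the verification of all six equivalent $\pFqName{4}{3}(1)$ forms to the single contiguous relation \eqref{eq:wlRecurRel2}, which it proves from scratch after observing and correcting a typographical error in the relevant formula from Wilson's thesis. Your ``analogous argument'' skips this Whittaker-theoretic shortcut entirely and would face a substantially heavier direct computation, with no guarantee the resulting recurrence reduces as cleanly.

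One smaller inaccuracy: the multiplicative normalization of $J_w$ is \emph{not} pinned down by matching against an integral representation of $K_w$ or by specializing $\mu$. It is built into the definition: $J_w^*(y,\mu)\sim I_{\mu,0}(y)$ as $y\to 0$ in $Y_w$, and then $J_w = J_w^*/\Lambda_w(\mu)$ with $\Lambda_w$ the explicit gamma product defined in the background section. The relation of $K_w$ to the $J_w$'s is a separate matter (the Asymptotics Theorem and \eqref{eq:KwToJw}) and plays no role in establishing the theorem you are proving.
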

Having the power series solutions provides the following benefits:
\begin{enumerate}
\item As a technical point of the current paper, in combination with the Asymptotics Theorem (see \cref{sect:Asymptotics}), knowing the power series expansions allows us to show that the Mellin-Barnes integrals we construct are, in fact, the Bessel functions.

\item When all of the $y$ arguments are small in terms of the spectral parameters $\mu$, the power series expansion is the asymptotic expansion.

\item Knowing that the Mellin-Barnes integral is equal to a particular sum of power series dramatically simplifies shifting its contour past the poles as there is no need work out residues at the actual poles, or to show that certain potential poles do not, in fact, exist.
This is a particularly trying procedure, see \cref{sect:BesselMBProof}.

\item Min Lee has communicated to the author a method through which one can use the power series to show that $H_w(f,y)$ is not merely negligible, but actually equal to zero when one of the coordinates of $y$ is small enough, provided $f$ has zeros at proscribed locations.
\end{enumerate}

For $\ell \in \Z$, $s \in \C$ we define a multiplicative character of $\R^\times$ by
\[ \chi_s^\ell(a) := \sgn(a)^\ell \abs{a}^s. \]
For $\mu \in \C^4$ with $\mu_1+\ldots+\mu_4=0$ and $\delta \in \Z^n$ (without restriction), we extend this to a character of diagonal matrices, aka the ``power function'' by
\[ I_{\mu,\delta}(\diag(a_1,\ldots,a_4)) := \prod_{i=1}^4 \chi_{\rho_i+\mu_i}^{\delta_i}(a_i), \]
where $\rho=(3,1,-1,-3)/2$ is the half-sum of the positive roots for $GL(4)$.
Finally, we also use the power function in the form
\[ \wtilde{I}_{\mu,\delta}\paren{a_1,a_2,a_3} := \prod_{i=1}^3 \chi_{-1+\mu_{i+1}-\mu_i}^{\delta_i+\delta_{i+1}}(a_i). \]
The spectral parameters $\mu$ are called the ``continuous part'' of the character $I_{\mu,\delta}$, and the parameters $\delta$ are called the ``discrete part''.
\emph{Throughout the paper, unless otherwise specified, we assume the coordinates of $\mu$ are distinct modulo $\Z$.}

For $\eta \in \Z$ and $s \in \C$, we define
\[ G_\eta(s) = \pi^{\frac{1}{2}-s} i^{\eta'} \frac{\Gamma\paren{\frac{\eta'+s}{2}}}{\Gamma\paren{\frac{1+\eta'-s}{2}}}, \]
where $\set{0,1} \ni \eta' \equiv \eta \pmod{2}$, and for $\ell \in \Z, t \in C^n, \eta \in \Z^n$, we define
\[ G_\ell(s,t,\eta) = \prod_{j=1}^n G_{\ell+\eta_j}(s+t_j). \]

Then in \cref{sect:IntRepns}, we construct Mellin-Barnes integrals for the Bessel functions.
\begin{thm}
Writing $\Delta=\delta_1+\delta_2+\delta_3+\delta_4$, the Bessel functions $K_w(y,\mu,\delta)$ are given by the Mellin-Barnes integrals
\begingroup
\allowdisplaybreaks
\begin{gather}
\label{eq:Kw31MB}
\begin{aligned}
	K_{31}(y,\mu,\delta) =& \frac{(-1)^\Delta}{4} \sum_{\ell\in\set{0,1}} \int_{\Re(s)=\frac{1}{5}} \chi_{\frac{3}{2}-s}^\ell(-y_3) G_\ell(s,-\mu,\Delta-\delta) \frac{ds}{2\pi i},
\end{aligned} \\
\label{eq:Kw22MB}
\begin{aligned}
	K_{22}(y,\mu,\delta) =& \frac{1}{4} \sum_{\ell\in\set{0,1}} \int_{\Re(s)=\frac{1}{4}} \chi_{2-s}^{\ell}(y_2) \frac{G_\ell(s,(\mu_i+\mu_j)_{i<J}, (\delta_i+\delta_j)_{i<j})}{G_\Delta(2s)} \frac{ds}{2\pi i},
\end{aligned} \\
\label{eq:Kw121MB}
\begin{aligned}
	& K_{121}(y,\mu,\delta) = \\*
	& \frac{(-1)^\Delta}{4} \sum_{\ell\in\set{0,1}^2} \int_{\Re(s)=(\frac{1}{7},\frac{1}{7})} \chi_{\frac{3}{2}-s_1}^{\ell_1}(-y_1) \chi_{\frac{3}{2}-s_2}^{\ell_2}(y_3) \frac{G_{\ell_1}(s_1,\mu,\delta) G_{\ell_2}(s_2,-\mu,\Delta-\delta)}{G_{\ell_1+\ell_2+\Delta}(s_1+s_2)} \frac{ds}{(2\pi i)^2},
\end{aligned} \\
\label{eq:Kw211MB}
\begin{aligned}
	& K_{211}(y,\mu,\delta) = \\*
	& \frac{(-1)^\Delta}{8} \sum_{\ell\in\set{0,1}^3} \int_{\Re(s)=(\frac{1}{7},\frac{1}{7},\frac{1}{7})} \chi_{2-s_1}^{\ell_1}(y_2) \chi_{\frac{3}{2}-s_3}^{\ell_3}(y_3) \\*
	& \times  G_{\ell_1}(s_1,-(\mu_1+\mu_2,\mu_1+\mu_3,\mu_2+\mu_3),\Delta-(\delta_1+\delta_2,\delta_1+\delta_3,\delta_2+\delta_3)) G_{\ell_3+\Delta-\delta_4}(s_3-\mu_4) \\*
	& \times \frac{G_{\ell_1+\ell_2+\delta_4}(s_1-s_2-\mu_4) G_{\ell_2+\ell_3}(s_3-s_2) G_{\ell_2}(s_2,-(\mu_1,\mu_2,\mu_3),\Delta-(\delta_1,\delta_2,\delta_3))}{G_{\ell_1+\ell_2+\Delta-\delta_4}(s_1+s_2+\mu_4)} \frac{ds}{(2\pi i)^3},
\end{aligned} \\
\label{eq:Kw1111MB}
\begin{aligned}
	& K_{1111}(y,\mu,\delta) = \\*
	& \frac{(-1)^\Delta}{16} \sum_{\ell\in\set{0,1}^4} \int_{\Re(s)=(2\epsilon,2\epsilon,2\epsilon,\epsilon)} \chi_{\frac{3}{2}-s_1}^{\ell_1}(-y_1) \chi_{2-s_2}^{\ell_2}(-y_2) \chi_{\frac{3}{2}-s_3}^{\ell_3}(-y_3) \\*
	& G_{\ell_1}(s_1,(\mu_1,\mu_2),(\delta_1,\delta_2)) G_{\ell_2}(s_2,(\mu_1+\mu_2,\mu_3+\mu_4),(\delta_1+\delta_2,\delta_3+\delta_4)) \\*
	& G_{\ell_3}(s_3,-(\mu_1,\mu_2),\Delta-(\delta_1,\delta_2)) G_{\ell_4}(s_4,(\mu_3,\mu_4),(\delta_3,\delta_4)) \\*
	& \frac{G_{\ell_1+\ell_4}(s_1-s_4) G_{\ell_2+\ell_4}(s_2-s_4,(\mu_1,\mu_2),(\delta_1,\delta_2)) G_{\ell_3+\ell_4+\delta_1+\delta_2}(s_3-s_4+\mu_1+\mu_2)}{G_{\ell_1+\ell_2+\ell_4+\delta_1+\delta_2}(s_1+s_2-s_4+\mu_1+\mu_2) G_{\ell_2+\ell_3+\ell_4+\Delta}(s_2+s_3-s_4)} \frac{ds}{(2\pi i)^4}.
\end{aligned}
\end{gather}
\endgroup
\end{thm}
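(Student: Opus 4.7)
The plan is to identify each Mellin-Barnes integral with the corresponding Bessel function by reducing both to the same explicit Frobenius series from the preceding theorem, and appealing to the Asymptotics Theorem of \cref{sect:Asymptotics} for uniqueness. In outline: first convergence, then the differential equations, then a contour shift whose residue sum is matched to $J_w(y,\mu)$ at Weyl-translated spectral parameters, and finally an appeal to uniqueness of solutions with prescribed asymptotics.

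First I would verify absolute convergence of each integral on its stated contour. Stirling's formula applied to the $\Gamma$ factors inside each $G_\eta$ yields polynomial bounds in the imaginary directions of the $s_j$; the exponential parts of the numerator and denominator $\Gamma$'s cancel against one another, leaving a power of $|\Im(s_j)|$ of sufficiently negative total order on the prescribed contours to give integrability. Next I would check that each integral is annihilated by the corresponding differential operators of the previous theorem. Applying such an operator under the integral sign amounts to multiplying the integrand by a polynomial in $s$ and $\mu$, and the functional equation $\Gamma(z+1)=z\,\Gamma(z)$ combined with the cancellation designed into the $G_{\ell+\eta_j}$ factors in numerator and denominator forces that polynomial to vanish identically.

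The heart of the argument is to shift each contour leftwards past the poles of the integrand, which come from the numerator $\Gamma$-factors inside the $G_{\ell+\eta_j}$'s. The pole locations read off from these $\Gamma$-factors are exactly the characteristic exponents controlling the starting powers of $J_w(y,\mu)$, and summing the residues produces, for each fixed $\ell\in\{0,1\}^k$, an explicit combination of Frobenius-type series at Weyl-translated values of $\mu$. Knowing the target series in advance from \eqref{eq:Jw31}-\eqref{eq:Jw1111} allows us to bypass the computation of iterated multi-residues at coinciding pole hyperplanes and to identify the resulting hypergeometric expression directly, rather than having to verify that certain potential poles are in fact removable. After summing over $\ell\in\{0,1\}^k$, the factors $\chi^{\ell_i}_{\cdot}$ assemble into the correct sign-dependence on the $y_i$ prescribed by $\delta$, and the resulting expansion matches that of $K_w(y,\mu,\delta)$ supplied by the Asymptotics Theorem; together with the differential equations this pins the integral down as $K_w$.

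The principal obstacle is the contour shifting for the long Weyl element $w_{1111}$: the contour is four-dimensional, the integrand has an intricate web of intersecting pole hyperplanes arising from the numerator $G$-factors and partially cancelled by the denominator $G$-factors, and a direct multi-residue calculation would be prohibitive. It is here that the Frobenius theorem earns its keep, as advertised in item (3) of the introduction: once the target is known up to Weyl translates and signs, one need only justify the contour shifts (polynomial decay on intermediate vertical slabs) and match leading residues, rather than tabulate every residue. The coupled contour shifts for $w_{211}$, where the denominator factor $G_{\ell_1+\ell_2+\Delta-\delta_4}(s_1+s_2+\mu_4)$ links the $s_1$ and $s_2$ shifts and the $\pFqName{3}{2}$ hypergeometric in $a_{211,m}(\mu)$ must emerge from the multi-residue, are handled by the same strategy. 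The technical details of the pole analysis are carried out in \cref{sect:BesselMBProof}.
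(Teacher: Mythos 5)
Your proposal captures the broad strategy---shift contours left, expand in residues, and match the resulting Frobenius-series expansion against \eqref{eq:KwToJw} and \eqref{eq:CwToR}---which is indeed what \cref{sect:BesselMBProof} does via the residue-calculus lemma \eqref{eq:mcFEval}. But your central claim, that ``knowing the target series\dots allows us to bypass the computation of iterated multi-residues\dots rather than having to verify that certain potential poles are in fact removable,'' misreads item~(3) of the introduction. That remark concerns \emph{using} the theorem (shifting contours in later applications, once the series identity is in hand), not \emph{proving} it. To establish that a given Mellin--Barnes integral equals $\sum_{w'}C_w(\mu^{w'},\delta^{w'})J_w(y,\mu^{w'},\delta^{w'})$, one must show the residue sum reproduces precisely these Frobenius series and nothing else; for $w_{211}$ and $w_{1111}$ this is where all the work lies. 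One must verify that the residues at the ``wrong'' pole configurations vanish (the empty-terms subsections, which rest on the $\pFqName{4}{3}$ identities \eqref{eq:4F3Denom2} and \eqref{eq:4F3normalizedgenrel} and contiguous relations), that the trigonometric poles cancel pairwise among $\what{K}_A,\what{K}_B,\what{K}_C,\what{K}_D$ for $w_{1111}$, and that the surviving residues match the hypergeometric coefficients $a^*_{1111,m},\dots,f^*_{1111,m}$ (and $a^*_{211},b^*_{211},c^*_{211}$ via the three-way identity \eqref{eq:F211Symmetries}) at Weyl-translated $\mu$. Matching only leading residues does not suffice: without the pole-by-pole analysis you have not yet shown the integral is \emph{any} linear combination of Frobenius series.

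Your other step---verifying the differential equations directly on the Mellin--Barnes integrand---is not used in the paper and is underdeveloped as stated. The Casimir operators involve multiplications by powers of $y_j$, which shift $s_j\mapsto s_j+k$ in the inverse Mellin transform; the differential equation thus becomes a finite-difference relation on the Mellin density, not ``a polynomial in $s$ and $\mu$ which vanishes identically,'' and making this precise requires its own contour manipulation to align the shifted integrands. Even granting it, the argument would not close for $w_{211}$: as \cref{sect:w211hiccup} records, the dimension of the full solution space of \eqref{eq:211DEs1} and \eqref{eq:211DEs2} is not established, so ``satisfies the differential equations with the prescribed asymptotics'' does not by itself imply ``lies in the span of the twelve Frobenius series.'' The direct residue computation of \cref{sect:BesselMBProof} is precisely how the paper avoids relying on that unproved dimension count.
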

The Mellin-Barnes integral representations can be used to provide a simple bound on the Bessel functions, but are more generally used to provide a truncated power series expansion by shifting the contours to the left.
The former, in concert with known bounds on the Kloosterman sums \cite{DabReeder,BlomerMan,Linn}, can be used to prove a Weyl law with power-saving error term.
The latter gives a Paley-Weiner theorem for $H_w(f,y)$, i.e. that $H_w(f,y)$ has better decay in $y$ near zero than $K_w(y,\mu,\delta)$ itself, and this is essential in proving that the sums of Kloosterman sums (i.e. the $c$ sum) can be truncated (in particular, the arithmetic/geometric side of the Kuznetsov formula converges).

For $\eta \in \Z$ and $a>0$, we define
\begin{align*}
	\mathcal{Z}_s^\eta(-a) =& 4 i^\eta K_s\paren{4\pi \sqrt{a}} \cos\tfrac{\pi}{2}\paren{s-\eta}, \\
	\mathcal{Z}_s^\eta(+a) =& \pi i^\eta \frac{J_{-s}\paren{4\pi \sqrt{a}}-(-1)^\delta J_s\paren{4\pi \sqrt{a}}}{\sin\frac{\pi}{2}\paren{s+\eta}},
\end{align*}
where $K_s$ and $J_s$ are the classical Bessel functions.
In the current notation, this would be a particular normalization of the $GL(2)$ Bessel function $K_{11}(y,\mu,\delta)$.

In \cref{sect:IntRepns}, we construct Stade-type integral representations where the $GL(4)$ Bessel function is given as an integral of $GL(2)$ Bessel functions.
These are useful to bound $K_w(y,\mu)$ and show decay when one of the coordinates of $y$ is large.
In \cite{Subconv,GPSSubconv}, these were used to cut off the sums of Kloosterman sums, but the author hopes to replace that with analysis of the inverse Mellin transform, see below.
\begin{thm}
We have the Stade-type multiple Bessel integrals
\begingroup
\allowdisplaybreaks
\begin{gather}
\label{eq:K31Stade}
\begin{aligned}
	K_{31}(y,\mu,\delta) =& (-1)^{\delta_4} \chi_{\frac{3-\mu_1-\mu_2}{2}}^{\delta_1+\delta_3+\delta_4}(y_3) \int_{\R} \chi_{\mu_3+\mu_4-1}^{\delta_2+\delta_4}(u) \mathcal{Z}_{\mu_2-\mu_1}^{\delta_1+\delta_2}(y_3 u) \mathcal{Z}_{\mu_4-\mu_3}^{\delta_3+\delta_4}(-1/u) du,
\end{aligned} \\
\label{eq:K22Stade}
\begin{aligned}
	K_{22}(y,\mu,\delta) =& \chi_{2+\frac{\mu_4-\mu_2}{2}}^{\delta_1+\delta_4}(y_2) \int_{\R^2} \chi_{-1-\frac{\mu_1-2\mu_2+\mu_3}{2}}^{\delta_1+\delta_2}(u_1) \chi_{-1+\mu_4-\mu_2}^{\delta_2+\delta_4}(u_2) \chi_{-1+\frac{\mu_1-2\mu_2+\mu_3}{2}}^{\delta_2+\delta_3}(1-u_1) \\*
	& \qquad \times \mathcal{Z}_{\mu_3-\mu_1}^{\delta_1+\delta_3}(y_2 u_2/u_1) \mathcal{Z}_{\mu_3-\mu_1}^{\delta_1+\delta_3}(1/(u_2 (1-u_1))) du,
\end{aligned} \\
\label{eq:K121Stade}
\begin{aligned}
	K_{121}(y,\mu,\delta) =& (-1)^{\Delta-\delta_2} \chi_{\frac{3-\mu_2-\mu_3}{2}}^{\delta_1}(y_1) \chi_{\frac{3-\mu_1-\mu_4}{2}}^{\Delta-\delta_4}(y_3) \int_{\R^3} \chi_{-1+\mu_2-\mu_3}^{\delta_2+\delta_3}(u_1) \chi_{-1+\frac{\mu_1-2\mu_2+\mu_4}{2}}^{\delta_2+\delta_4}(u_2) \\*
	& \qquad \times  \chi_{-1-\frac{\mu_1-2\mu_2+\mu_4}{2}}^{\delta_1+\delta_2}(u_3)\chi_{-1+\mu_3-\mu_2}^{\delta_1+\delta_3}(1-u_1) \\*
	& \qquad \times  \mathcal{Z}_{\mu_4-\mu_1}^{\delta_1+\delta_4}\paren{\frac{y_1}{u_3}} \mathcal{Z}_{\mu_4-\mu_1}^{\delta_1+\delta_4}\paren{\frac{y_3}{u_2}} \e{\frac{u_2}{u_1}-\frac{u_3}{1-u_1}} du,
\end{aligned} \\
\label{eq:K211Stade}
\begin{aligned}
	& K_{211}(y,\mu,\delta) = \\*
	& (-1)^{\delta_4} \chi_{2+\frac{\mu_4-\mu_2}{2}}^{\delta_1+\delta_4}(y_2) \chi_{\frac{3-\mu_1-\mu_4}{2}}^{\delta_1+\delta_2+\delta_3}(y_3) \int_{\R^3} \chi_{-1+\mu_3-\mu_2}^{\delta_2+\delta_3}(u_1) \chi_{-1+\frac{\mu_1-2\mu_3+\mu_4}{2}}^{\delta_3+\delta_4}(u_2) \\*
	& \qquad \times \chi_{-1-\frac{\mu_1+2\mu_2-\mu_3-2\mu_4}{2}}^{\delta_2+\delta_4}(u_3) \chi_{-1+\mu_4-\mu_1}^{\delta_1+\delta_4}(1-u_1) \chi_{-1+\mu_3-\mu_1}^{\delta_1+\delta_3}(1-u_2) \\*
	& \qquad \times \mathcal{Z}_{\mu_3-\mu_1}^{\delta_1+\delta_3}\paren{y_2 u_3} \mathcal{Z}_{\mu_4-\mu_1}^{\delta_1+\delta_4}\paren{-\frac{y_3}{u_2}} \e{\frac{1}{u_3(1-u_1)}+\frac{u_2}{u_1 u_3(1-u_1)(1-u_2)}} du,
\end{aligned} \\
\label{eq:K1111Stade}
\begin{aligned}
	& K_{1111}(y,\mu,\delta) = \\*
	& (-1)^{\delta_2+\delta_4} \chi_{\frac{3+\mu_1+\mu_2}{2}}^{\delta_1}(y_1) \chi_2^{\delta_1+\delta_3}(y_2) \chi_{\frac{3+\mu_1+\mu_2}{2}}^{\delta_1+\delta_2+\delta_3}(y_3) \int_{\R^2} \chi_{-1+\mu_3+\mu_4}^{\delta_1+\delta_4}(u_1) \chi_{-1+\mu_3+\mu_4}^{\delta_1+\delta_3}(u_2) \\*
	& \qquad \chi_0^{\delta_1+\delta_2}((1-u_1)(1-u_2)) \mathcal{Z}_{\mu_2-\mu_1}^{\delta_1+\delta_2}\paren{y_1(1-u_2^{-1})} \mathcal{Z}_{\mu_2-\mu_1}^{\delta_1+\delta_2}\paren{y_2 (1-u_1^{-1})(1-u_2)} \\*
	& \qquad \mathcal{Z}_{\mu_2-\mu_1}^{\delta_1+\delta_2}(-y_3(1-u_1)) \mathcal{Z}_{\mu_4-\mu_3}^{\delta_3+\delta_4}(-y_2 u_2/u_1) du.
\end{aligned}
\end{gather}
\endgroup
\end{thm}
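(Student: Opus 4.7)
The plan is to verify each Stade-type integral by comparing its Mellin transform in the $y$ variables against the Mellin-Barnes representation of the previous theorem. Since the $G_\eta(s)$ factors encode precisely the Mellin transforms of the $GL(2)$ kernels $\mathcal{Z}_s^\eta$, the matching should reduce to bookkeeping on each monomial in the integrand.

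I would first record the two basic Mellin computations once and for all. The Mellin-transform identity
\[
\int_{\R} \chi_{s-1}^\ell(y)\,\mathcal{Z}_\nu^\eta(\pm y)\,dy \;=\; \text{a normalized ratio of } G_{\ell+\eta}(\cdot)\text{ evaluated at }s,\nu,
\]
deduced from the classical Mellin transforms of $J_\nu$ and $K_\nu$ together with the definitions of $\mathcal{Z}_s^\eta(\pm a)$, is what converts each $\mathcal{Z}$-factor in a Stade integrand into a $G$-factor after Mellin inversion. Second, the beta-type evaluations
\[
\int_\R \chi_a^{\eta_1}(u)\chi_b^{\eta_2}(1-u)\,du \;=\; \text{a product/ratio of } G_\cdot(\cdot),
\]
together with the Mellin representation of the additive characters $\e{\cdot}$ appearing in the $w_{121}$ and $w_{211}$ integrals, handle all remaining $u$-integrations.

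With these lemmas in hand, for each $w \in \{w_{31},w_{22},w_{121},w_{211},w_{1111}\}$ I would apply Mellin inversion to every $\mathcal{Z}$ (and every $\e{\cdot}$) in the proposed Stade integral, interchange the $s$- and $u$-integrals after verifying absolute convergence on strips of $\Re(s)$ near the contours specified in the Mellin-Barnes theorem, and then evaluate the inner $u$-integrals by the beta-type lemmas. The result, after relabeling the Mellin variables and using $\chi_s^\ell(-y) = (-1)^\ell \chi_s^\ell(y)$ together with the $\chi^\cdot(y_i)$ prefactors pulled out of the Stade expressions, should agree term-for-term with the Mellin-Barnes formulas \eqref{eq:Kw31MB}--\eqref{eq:Kw1111MB}. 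For $w_{31}$ (single $u$-integral, two $\mathcal{Z}$'s) this is essentially immediate; for $w_{22}$, $w_{121}$, $w_{211}$ the identification of the denominator $G_\cdot$ in the Mellin-Barnes integrand with the ratio produced by the beta integrals (or by the inverse Mellin transform of $\e{\cdot}$) is the central check.

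The main obstacle will be the combinatorial bookkeeping for $w_{1111}$, where the Stade integrand has four $\mathcal{Z}$'s but only a two-dimensional $u$-integral, so two Mellin variables must be ``absorbed'' by the nontrivial $u$-dependence inside the arguments $y_1(1-u_2^{-1})$, $y_2(1-u_1^{-1})(1-u_2)$, $y_3(1-u_1)$, $y_2 u_2/u_1$. In practice this requires finding a change of variables on the $u$-domain that decouples the four Mellin transforms into standard beta integrals, and verifying that the denominator $G$-factors $G_{\ell_1+\ell_2+\ell_4+\delta_1+\delta_2}(\cdots)$ and $G_{\ell_2+\ell_3+\ell_4+\Delta}(\cdots)$ in \eqref{eq:Kw1111MB} arise correctly from the resulting pattern of Gamma functions. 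A secondary issue is justifying the interchange of integrals: each $\mathcal{Z}$ has only polynomial decay/oscillation and the inner $u$-integrals are conditionally convergent, so this will be handled by first shifting the Mellin contours in $s$ into a region of absolute convergence, performing the interchange there, and then analytically continuing back, in the spirit of the Interchange of Integrals arguments of \cref{sect:IoI}.
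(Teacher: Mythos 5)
Your plan and the paper's share the same computational core — Mellin-expanding the $\mathcal{Z}$ factors via \eqref{eq:ZMellin}, evaluating the remaining $u$-integrals by the beta-type identity \eqref{eq:BetaEval}, and matching against the $G_\ell$ quotients in the Mellin--Barnes integrals — but they differ in logical organization in a way that matters. The paper does not prove each Stade integral by directly Mellin-transforming it and comparing with the (already-proven) MB formula. Instead, all of the representations (inverse Mellin, Fourier-inverse Mellin, Stade, Mellin--Barnes) are derived from the single conditionally convergent integral $\mathcal{I}_w = \int_{\wbar{U}_w(\R)} I^W_{\mu,\delta}(ywu)\,\wbar{\psi_I(u)}\,du$ by elementary substitutions and by collapsing a subset of the $x$-integrals into $\mathcal{Z}$ functions via \eqref{eq:mcZInt}. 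The identification $K_w = \mathcal{I}_w$ is then established once, via the Mellin--Barnes form and the power-series comparison in \cref{sect:BesselMBProof}. This has the advantage that the substitutions used (e.g.\ $x_1 \mapsto x_1 x_2 x_3 / x_4$ for $w_{22}$, or the choice of which $x$-variables to collapse into $\mathcal{Z}$'s) come naturally from the unipotent coordinates and do not have to be reverse-engineered from the final Stade expression; it also lets the convergence issues be handled uniformly by wrapping with a test function $f(\mu)$ and the $\theta < \pi/2$ truncation of \eqref{eq:PsiThetaInvMellin}, rather than by contour shifting and analytic continuation as you propose.

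The place where this distinction becomes a genuine gap in your plan is $w_{1111}$. You correctly flag the difficulty — four $\mathcal{Z}$'s whose arguments $y_1(1-u_2^{-1})$, $y_2(1-u_1^{-1})(1-u_2)$, $-y_3(1-u_1)$, $-y_2 u_2/u_1$ have coupled $u$-dependence, against only a two-dimensional $u$-domain — but ``finding a change of variables that decouples the four Mellin transforms into standard beta integrals'' is not a step you have, and the paper in fact avoids it. The paper does \emph{not} pass directly from \eqref{eq:K1111Stade} to \eqref{eq:Kw1111MB}; both are obtained from the intermediate six-dimensional representation \eqref{ex:LastIw1111} (obtained from $\mathcal{I}_{1111}$ by the substitutions displayed there), with the Stade form coming from collapsing the $x_1,x_3,x_4,x_6$ integrals into $\mathcal{Z}$'s and the MB form from Mellin-expanding all eight additive-exponential terms and doing all six $x$-integrals. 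A direct Mellin-matching argument starting from the Stade integrand would have to recover, from a two-variable beta-type integral with products of the form $(1-u_i)$ and $(1-u_i^{-1})$ inside the $\mathcal{Z}$ arguments, the specific three-$G$-to-two-$G$ ratio in the denominator of \eqref{eq:Kw1111MB}, and as stated you have not shown this can be done. So you should either carry out that matching explicitly, or adopt the paper's strategy of deriving both Stade and MB forms from the common direct integral $\mathcal{I}_w$.
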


We will use the following auxiliary functions:
\begingroup
\allowdisplaybreaks
\begin{align*}
	Z_{22}(t) :=& \mathcal{Z}_0^0(t), \\
	Z_{121}(t) :=& \int_{\R^2} \e{t_1 u_1+\frac{u_2}{u_1}+\frac{1}{u_2}} \frac{du_2 \, du_1}{\abs{u_2 u_1}} = \int_{\R} \mathcal{Z}_0^0\paren{t_1 u} \e{\frac{1}{u}} \frac{du}{\abs{u}}, \\
	Z_{211}(t) :=& \int_{\R^2} \e{u_1+u_2+\frac{t_1}{u_1}+\frac{t_2}{u_2}+t_3 \frac{u_1}{u_2}+t_4 \frac{u_2}{u_1}+\frac{t_5}{u_1 u_2}} \frac{du_2 \, du_1}{\abs{u_1 u_2}} \\*
	=& \int_{\R} \mathcal{Z}_0^0\paren{\paren{1+\frac{t_3}{u}}\paren{t_4 u+t_1+\frac{t_5}{u}}} \e{u+\frac{t_2}{u}} \frac{du}{\abs{u}},\\
	Z_{1111}(y,z) :=& \int_{\R^3} e\biggl(-y_3 \frac{u_1 u_2 z_1-z_2+u_1 (1-u_2) z_3}{u_1^2(1-u_2) u_2 u_3}-y_2 u_1-y_1 u_3 \\*
	& \qquad+\frac{u_1 z_1 - z_2}{u_1 u_3 (1-u_2) z_1}+\frac{z_2-u_1(1-u_2)z_3}{u_1 u_2 z_2}+\frac{u_1 u_2 u_3}{z_3}\biggr) \frac{du}{\abs{u_1 u_2 (1-u_2) u_3}} \\*
	=& \int_{\R^2} \mathcal{Z}_0^0\paren{\paren{y_1-\frac{u_1 u_2}{z_3}}\paren{y_3 \frac{u_1 u_2 z_1-z_2+u_1 (1-u_2) z_3}{u_1^2(1-u_2) u_2}-\frac{u_1 z_1 - z_2}{u_1 (1-u_2) z_1}}} \\*
	& \qquad\e{-y_2 u_1+\frac{z_2-u_1(1-u_2)z_3}{u_1 u_2 z_2}} \frac{du}{\abs{u_1 u_2 (1-u_2)}}.
\end{align*}
\endgroup
We note that $Z_{22}(t)$ is a normalization of the $GL(2)$ Bessel function $K_{11}(t,0,0)$ and $Z_{121}(t)$ is a normalization of the $GL(3)$ Bessel function $K_{21}(t,0,0)$, but $Z_{211}(t)$ and $Z_{1111}(y,z)$ appear to be new functions.
These integrals converge conditionally, but nicely enough, see \cref{sect:IntRepns}.

In \cref{sect:IntRepns}, we find good integral representations for the inverse Mellin transform.
We define the inverse Mellin transforms (i.e. Mellin expansion) of the Bessel functions by
\begin{align}
\label{eq:wcheckKdef}
	K_w(y,\mu,\delta) =& I_{-\mu,\delta}(y^{\iota}) \int_{\R^3} \wcheck{K}_w(y,z) \wtilde{I}_{\mu,\delta}(z) dz,
\end{align}
where, in terms of the coordinates \eqref{eq:GwlCoordsY},
\begin{align*}
	I_{\mu,\delta}(y^\iota) =& \chi_{\frac{3}{2}-\mu_4}^{\delta_4}(y_1) \chi_{2-\mu_3-\mu_4}^{\delta_3+\delta_4}(y_2) \chi_{\frac{3}{2}-\mu_2-\mu_3-\mu_4}^{\delta_2+\delta_3+\delta_4}(y_3).
\end{align*}
\begin{thm}
We have the inverse Mellin transforms
\begingroup
\allowdisplaybreaks
\begin{align}
\label{eq:K31IM}
	\wcheck{K}_{31}(y,z) =& (-1)^{\delta_1+\delta_2+\delta_3} \e{-y_3 z_1-\frac{z_2}{z_1}-\frac{z_3}{z_2}+\frac{1}{z_3}}, \\
\label{eq:K22IM}
	\wcheck{K}_{22}(y,z) =& Z_{22}\paren{\frac{(1-y_2 z_2)(z_2+z_1 z_3)(z_1-z_3)}{z_1 z_2 z_3}}, \\
\label{eq:K121IM}
	\wcheck{K}_{121}(y,z) =& (-1)^{\delta_1} Z_{121}\paren{\frac{(z_1-z_2)(z_2-z_3)(1+y_3 z_1)(1-y_1 z_3)}{z_1 z_2 z_3}}, \\
\label{eq:K211IM}
	\wcheck{K}_{211}(y,z) =& (-1)^{\delta_1} Z_{211}\paren{y_2\paren{\frac{z_2}{z_1}-y_3 z_1}, \frac{1}{z_3}-y_3,\frac{1}{y_2 z_2}, y_2 z_3, -\frac{y_2 y_3 z_2}{z_3}}, \\
\label{eq:K1111IM}
	\wcheck{K}_{1111}(y,z) =& (-1)^{\delta_1+\delta_2} Z_{1111}(y,z).
\end{align}
\endgroup
\end{thm}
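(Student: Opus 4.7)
The plan is to verify each claimed identity by substituting the proposed $\wcheck{K}_w$ into the right-hand side of \eqref{eq:wcheckKdef} and matching the result against one of the integral representations of $K_w$ already in hand. The most natural match is to the Stade-type representations \eqref{eq:K31Stade}--\eqref{eq:K1111Stade}, because the auxiliary functions $Z_{22}, Z_{121}, Z_{211}, Z_{1111}$ are themselves defined as the iterated integrals stripped of character data, so the comparison becomes essentially a change of variables plus Mellin inversion.

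For $w_{31}$, $\wcheck{K}_{31}$ is a pure additive character, and the right-hand side of \eqref{eq:wcheckKdef} becomes a triple iterated Mellin integral whose $z_i$-integration has the shape $\int \e{a u + b/u} \chi^\eta_{s-1}(u)\,du$. Each such integration produces a $\mathcal{Z}$ factor (or a $G_\eta$ Gamma factor in degenerate cases) with a Mellin-shifted exponent. Peeling off the three $z$-integrations one at a time, and tracking how the prefactor $I_{-\mu,\delta}(y^{\iota})$ combines with the resulting $\chi$-weights, reproduces either the Mellin-Barnes integral \eqref{eq:Kw31MB} or, after a further Mellin inversion, the Stade integral \eqref{eq:K31Stade}. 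Comparison of signs then fixes the prefactor $(-1)^{\delta_1+\delta_2+\delta_3}$.

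For $w_{22}$, $w_{121}$, $w_{211}$, I would run the same strategy, but organize the calculation around the $Z_w$ functions. Substituting the defining integral for $Z_w$ into the proposed $\wcheck{K}_w$ and interchanging the $z$-integrations past the $u$-integrations of $Z_w$, the change of variables that linearizes the argument of $Z_w$ in the $u$-variables converts the $\wtilde{I}_{\mu,\delta}(z)$ weights into precisely the power functions $\chi$ of the corresponding Stade integrals, while the inner $\mathcal{Z}_0^0$ kernels inside $Z_w$ absorb the residual Mellin weights and become the $\mathcal{Z}_{\mu_i-\mu_j}^{\delta_i+\delta_j}$ factors appearing in the Stade integral. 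The $I_{-\mu,\delta}(y^{\iota})$ prefactor must match the $y$-character factors pulled out in front of the Stade integrals, which determines the signs and normalizations.

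The hardest case will be $w_{1111}$, where $Z_{1111}(y,z)$ is itself a triple integral that entangles the four $\mathcal{Z}$-kernels of \eqref{eq:K1111Stade} through a nontrivial rational expression in $(u,z)$. The main obstacle is the bookkeeping of the resulting six-fold change of variables, combined with justifying the interchange of integrals among conditionally convergent oscillatory integrals. I would address this by inserting a smoothing regularization (for instance, multiplying the integrand by $e^{-\epsilon(|u|^2+|z|^2)}$ and taking $\epsilon \to 0^+$), using the oscillatory phase to control the contributions of large $(u,z)$, and reusing the Boolean-case analysis developed in \cref{sect:IoI} to handle the wedges in which different stationary-phase regimes dominate. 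Once the interchange is justified, the change of variables identifies the right-hand side of \eqref{eq:wcheckKdef} with \eqref{eq:K1111Stade} and fixes the $(-1)^{\delta_1+\delta_2}$ sign.
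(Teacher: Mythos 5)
Your outline can be made to work, but it takes a genuinely different and longer route than the paper, and one auxiliary ingredient does not fit the task. The paper's proof is a direct \emph{forward} computation: starting from the identity $K_w=\mathcal{I}_w$ (established in \cref{sect:BesselMBProof}), one performs a change of variables in the $\wbar{U}_w$-integral of $\mathcal{I}_w$ — see \eqref{eq:Iw31}, \eqref{eq:Iw22}, \eqref{eq:Iw121}, \eqref{eq:Iw211} and the corresponding $w_l$ display — so that the three arguments of $\wtilde{I}_{\mu,\delta}$ become the new coordinates $z_1,z_2,z_3$. By comparison with \eqref{eq:wcheckKdef}, the residual integral over the leftover $\wbar{U}_w$-coordinates then \emph{is} $\wcheck{K}_w(y,z)$, and one recognizes it as $Z_w$ evaluated at the stated argument. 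The Stade-type integrals never enter this derivation; they are a sibling consequence, obtained from $\mathcal{I}_w$ by a different substitution and by collapsing pairs of coordinates via \eqref{eq:mcZInt}. Choosing Stade as your comparison target therefore costs you a round trip: the $z$-integral of $Z_w(\cdots)$ against $\wtilde{I}_{\mu,\delta}(z)$ does not collapse to the $\mathcal{Z}_{\mu_i-\mu_j}^{\delta_i+\delta_j}$ kernels of the Stade formulas by a change of variables alone; you would have to expand the inner $\mathcal{Z}_0^0$ kernels, reproduce $\mathcal{I}_w$ in extended variables, and then re-collapse a different subset of them, which is exactly the parallel Stade derivation redone. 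Aiming directly at $\mathcal{I}_w$ removes all of that.

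For $w_{1111}$ the convergence treatment you propose is also a misfit. The Boolean case analysis of \cref{sect:IoI} is purpose-built for the Iwasawa-decomposed phase $\phi(x)$ of \eqref{eq:KwPhase} arising in $\mathcal{H}_w$ and is used to prove the Interchange of Integrals; it is not the tool for justifying coordinate changes in the Bruhat-decomposed $\mathcal{I}_w$. The paper handles all the conditional-convergence issues in \cref{sect:IntRepns} uniformly through the six-step framework laid out at the start of that section (insert a test function in $\mu$, Mellin-expand the exponentials via \eqref{eq:PsiThetaInvMellin} with $\theta<\frac{\pi}{2}$, apply Fubini--Tonelli to the now absolutely convergent iterated integral, take $\theta\to\frac{\pi}{2}^-$ by dominated convergence). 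Your Gaussian regularization is a plausible alternative, but it is unnecessary and it does not combine cleanly with the Boolean analysis you plan to reuse.
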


Finally, also in \cref{sect:IntRepns}, we find integral representations for the Fourier transform of the inverse Mellin transform.
We define this transform by
\begin{align}
\label{eq:wtildeKdef}
	\wtilde{K}_w(t,z) =& \int_{Y_w} \wcheck{K}_w(y,z) \prod_{i=1}^\ell \e{-y_{\hat{r}_i} t_i} dy_i,
\end{align}
for $w=w_{r_1,\ldots,r_\ell}$, $t\in\R^\ell$, where $\hat{r}_i := r_1+\ldots+r_i$.
\begin{thm}
We have the Fourier-inverse Mellin transforms
\begingroup
\allowdisplaybreaks
\begin{align}
\label{eq:K22FIM}
	\wtilde{K}_{22}(t,z) =& \frac{(-1)^{\delta_1+\delta_3}}{\abs{t_1}} \e{-\frac{t_1}{z_2}-\paren{\frac{z_2+z_1 z_3}{t_1}}\paren{\frac{1}{z_3}-\frac{1}{z_1}}}, \\
\label{eq:K121FIM}
	\wtilde{K}_{121}(t,z) =& \frac{(-1)^{\delta_1}}{\abs{t_1 t_2}} \e{\frac{t_2}{z_1}+\frac{(z_1-z_2)(z_3-z_2)}{z_2 t_1 t_2}-\frac{t_1}{z_3}}, \\
\label{eq:K211FIM}
	\wtilde{K}_{211}(t,z) =& \frac{(-1)^{\delta_1}}{\abs{t_1 t_2+z_1}} \e{-\frac{z_1}{z_2}+\frac{z_2 t_2-z_1 z_3}{z_1(z_1+t_1 t_2)}-\frac{t_1 t_2}{z_2}-\frac{z_2+t_1 z_3}{z_3(z_1+t_1 t_2)}}. \\
\label{eq:K1111FIM}
	\wtilde{K}_{1111}(t,z) =& \delta_{U \ge 0} \frac{(-1)^{\delta_1+\delta_2}}{\sqrt{U}} \sum_\pm e\biggl(\frac{t_2 t_3}{z_1}+(z_1-z_3)\paren{\frac{1}{t_3 z_3}-\frac{1}{t_1 z_1}}+\frac{z_1+z_3}{z_2} \\*
	& \qquad +\frac{t_1 t_2(z_2+t_3 z_3)}{z_2 z_3} \pm \sqrt{U}\paren{\frac{1}{t_1 z_1}-\frac{1}{z_2}+\frac{1}{t_3 z_3}}\biggr), \nonumber
\end{align}
\endgroup
where
\[ U = (t_1 t_2 t_3+z_1 - z_3)^2+4 t_1 t_3 (z_2+t_2 z_3), \]
while the Fourier-inverse Mellin transform of $K_{31}$ is singular.
\end{thm}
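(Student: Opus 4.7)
The plan is to substitute the explicit formulas \eqref{eq:K22IM}--\eqref{eq:K1111IM} into the defining integral \eqref{eq:wtildeKdef} and compute the Fourier transforms directly. In each case, the key idea is to replace $Z_w$ by its defining multiple integral (given in the paragraph preceding \eqref{eq:K31IM}), interchange the order of integration, and perform the Fourier integrals in the $y$-variables on the inside. Since each $Z_w$ is (after this expansion) an integral of a pure exponential whose phase is a rational function of auxiliary variables $u_i$ and $y$, each Fourier integral $\int \e{\phi(u,z,y)-y_i t_i} dy_i$ collapses to a Dirac delta enforcing a single algebraic relation among the $u_i$.

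For $w_{22}$, the one-dimensional representation $Z_{22}(t_0)=\int_\R\e{u+t_0/u}\frac{du}{|u|}$ is inserted; the $y_2$-Fourier integral fixes $u$ to a single value linear in $t_1^{-1}$, and routine simplification gives the exponential in \eqref{eq:K22FIM} together with the factor $1/|t_1|$ coming from the Jacobian of the delta. The cases $w_{121}$ and $w_{211}$ proceed along the same lines: each of $Z_{121}$ and $Z_{211}$ is a two-dimensional integral of $\e{\cdots}$, and the two Fourier integrations in $y_1,y_3$ (resp.\ $y_2,y_3$) produce two delta constraints that are linear in the $u_i$. Solving these constraints and substituting back into the residual exponent yields the claimed formulas \eqref{eq:K121FIM} and \eqref{eq:K211FIM}, with the Jacobian factors $1/|t_1 t_2|$ and $1/|t_1 t_2+z_1|$ emerging from the linear changes of variables.

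The case $w_{1111}$ is the substantive one. Inserting the three-dimensional representation of $Z_{1111}$ and performing the three $y_j$-Fourier integrals gives three delta constraints on $u_1,u_2,u_3$. Two of these can be solved linearly, but eliminating them reduces the third constraint to a quadratic equation in the remaining variable with discriminant precisely $U=(t_1 t_2 t_3+z_1-z_3)^2+4t_1 t_3(z_2+t_2 z_3)$. This accounts for all the nonstandard features of \eqref{eq:K1111FIM}: the indicator $\delta_{U\ge 0}$ (real solutions exist iff $U\ge 0$), the factor $1/\sqrt U$ (Jacobian from the quadratic inversion), the sum $\sum_\pm$ (the two roots), and the $\pm\sqrt U$ inside the exponential (the part of the reduced phase that depends on the chosen root). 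The remaining terms in the exponent are exactly what is left of the phase after eliminating $u_1,u_2,u_3$ via the three constraints. For $w_{31}$, the function $\wcheck{K}_{31}$ depends on $y_3$ only through the linear exponential $\e{-y_3 z_1}$, so its Fourier transform is a distributional $\delta(z_1+t_1)$ times the other exponential factors — hence the statement that $\wtilde{K}_{31}$ is singular.

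The principal obstacles I anticipate are twofold. First, the $Z_w$ integrals are only conditionally convergent and the interchanges of integration must be justified, presumably by reinstating a regularization of the type used throughout \cref{sect:IntRepns} (for example, truncating in $u$ and passing to the limit, or working with the Mellin-Barnes side on which the integrands are absolutely convergent in the appropriate strip). Second, and most delicately, the quadratic resolution in the $w_{1111}$ case requires careful bookkeeping of signs and branches so that the two roots assemble into the symmetric sum $\sum_\pm$, the square root $\sqrt U$ is taken with the correct (positive real) branch on $\{U\ge 0\}$, and the absolute-value Jacobian produces the stated factor $(-1)^{\delta_1+\delta_2}/\sqrt U$ rather than a signed version. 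Once these branches are fixed consistently, the remaining manipulations are algebraic simplification of the residual exponent.
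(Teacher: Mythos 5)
Your proposal matches the paper's proof: the paper also computes each $\wtilde{K}_w$ by writing $\wcheck{K}_w(y,z)$ as an integral over residual auxiliary variables with the $y$-dependence made linear in the exponent by a change of coordinates, so that the Fourier integrals in $y$ collapse to delta constraints whose solutions and Jacobians produce both the prefactors ($1/\abs{t_1}$, $1/\abs{t_1 t_2}$, $1/\abs{t_1 t_2 + z_1}$, $1/\sqrt{U}$) and the residual phases, with the last constraint for $w_{1111}$ being quadratic (hence the $\pm\sqrt{U}$ and $\delta_{U\ge 0}$) and $w_{31}$ having no auxiliary variable left (hence singular). The paper records this as a chain of explicit substitutions — e.g. $(x_3,x_5)\mapsto\bigl(-t_1,-\tfrac{z_2+z_3 t_1}{z_1+t_1 t_2}\bigr)$ for $w_{211}$ and the quadratic substitution in $x_5$ for $w_{1111}$ — which is precisely the delta-function elimination you describe written in closed form.
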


In terms of the depth of the corresponding analysis, the Weyl law mentioned above is the weakest application of the Kuznetsov formula; it is a significant step up to show subconvexity for the corresponding $L$-functions, and the author envisions applying these integral representations as follows:
\begin{enumerate}[label=\alph*.]
\item To provide a sharp cutoff for the sums of Kloosterman sums (i.e. each sum is negligibly small when any coordinate of $c$ is large enough).
With the inverse Mellin transform we have
\[ H_w(f,y) = \int_{\R^3} \wcheck{K}_w(y,z) \wcheck{f}(y,z) dz, \qquad \wcheck{f}(y,z) := \int_{\Re(\mu)=0} f(\mu) I_{-\mu,\delta}(y^{\iota}) \wtilde{I}_{\mu,\delta}(z) \dspecmu, \]

Analyzing $\wcheck{f}(y,z)$ is trivial (if the effective support of $f$ is large, then $\wcheck{f}$ has small effective support near eight points) and the $GL(2)$ Bessel functions appearing in $\wcheck{K}_w(y,z)$ are (the classical Bessel functions) $K_0(\cdot), J_0(\cdot), Y_0(\cdot)$ as opposed to the functions $K_{\mu_j-\mu_k}(\cdot), J_{\mu_j-\mu_k}(\cdot), Y_{\mu_j-\mu_k}(\cdot)$ appearing in the Stade-type integrals above.
The former are far simpler to analyze, with the trade-off of considering a slightly higher-dimensional integral.

The inverse Mellin transform was also used directly in \cite{MeFan01} to provide a simple, but strong upper bound on $H_w(f,y)$.
(The present method offers a considerably simpler expression than what appeared in that paper.)

\item To provide a bound on the Fourier transform of $H_w(f,y)$ after applying Poisson summation in the index variables $m$ and $n$.
At, say, $w=w_l$, using the Fourier-inverse Mellin transform we have
\begingroup
\allowdisplaybreaks
\begin{gather*}
\begin{aligned}
	\int_{\R^3} \int_{\R^3} &H_{w_l}\paren{f,(m_1n_3,m_2n_2,m_3n_1)} g(m,n) \e{\sum_{i=1}^3(\xi_i m_i+\zeta_i n_i)} dm \, dn \\*
	&= \int_{\R^3} \int_{\R^3} \wtilde{K}_{w_l}(t,z) \wtilde{f}(g,m,n) dt dz,
\end{aligned} \\
	\wtilde{f}(g,m,n) := \int_{\R^3} \int_{\R^3} \wcheck{f}(m_1n_3,m_2n_2,m_3n_1,z) g(m,n) \e{\sum_{i=1}^3(\xi_i m_i+\zeta_i n_i+m_i n_{4-i} t_i)} \, dm \, dn.
\end{gather*}
\endgroup
Again, $\wtilde{f}(g,m,n)$ is (relatively) trivial to analyze and $\wtilde{K}_{w_l}(t,z)$ is quite simple in most cases.
\end{enumerate}

\subsection{Some notation}
Let $G := GL(n,\R)/\R^+$ and $K := O(n)$.
As usual, we treat elements of $G$ as matrices and write $Y \subset G$ for the diagonal matrices, $Y^+ \subset Y$ for the positive diagonal matrices, $U(\R) \subset G$ for the upper-triangular unipotent matrices with standard generic character $\psi_I$, $W \subset G$ for the Weyl group, $\wbar{U}_w = U \cap (w^{-1} \trans{U} w)$ and $U_w = U \cap (w^{-1} U w)$.

As always, the left and right actions of the Weyl group on $\mu$ and $\delta$ are
\[ I^*_{\mu^w,\delta^w}(y) = I^*_{\mu,\delta}(wyw^{-1}) = I^*_{w^{-1}(\mu),w^{-1}(\delta)}(y), \]
where $I^*_{\mu,\delta} := I_{\mu-\rho,\delta}$ is the unnormalized power function (which always looks confusing, but is, in fact, the correct order for the left and right actions), and we use the conventions $\mu^w_i := (\mu^w)_i = \mu_{i^w} = \mu_{w^{-1}(i)} = w^{-1}(\mu)_i$.

We extend the power function via the Iwasawa decomposition to
\begin{align*}
	I^I_{\mu,\delta,\sigma}(xyk) =& I_{\mu,\delta}(y) \Sigma_{\delta,\sigma} \sigma(k), \qquad \Sigma_{\delta,\sigma} := \frac{1}{\abs{V}} \sum_{v \in V} I_{-\rho,\delta}(v) \sigma(v), \qquad \sigma \in \what{K}, \qquad V := Y \cap K,
\end{align*}
and via the Bruhat decomposition on $G_{w_l} := U(\R) Y \trans{U(\R)}$ to
\begin{align*}
	I^B_{\mu,\delta}(xy\trans{u}) =& I_{\mu,\delta}(y).
\end{align*}
Lastly, we define a Whittaker-like power function on $G_{w_l}$ by
\begin{align*}
	I^W_{\mu,\delta}(xy\trans{u}) =& \psi_I(x) I_{\mu,\delta}(y).
\end{align*}

For an irreducible, unitary representation $\sigma$ of $K$, the matrix-valued Jacquet-Whittaker function of $g \in G$ is
\[ W_\sigma(g,\mu,\delta) = \int_{U(\R)} I^I_{\mu,\delta,\sigma}(w_l u g) \wbar{\psi_I(u)} du, \]
where
\[ \psi_I(x) = \e{x_{1,2}+x_{2,3}+x_{3,4}}, \qquad x\in U(\R), \qquad \e{t}:= e^{2\pi it}, \]
and $w_l$ the long Weyl element of $G$.

We will refer to (principal series) representations of $G$ induced from the minimal parabolic as ``principal series'' (aka ``almost spherical'') and (principal series) representations of $G$ induced from a non-minimal parabolic as ``generalized principal series''.
Families of such representations $\pi$ are parameterized by the minimum weight of the irreducible representations of $K$ occurring in $\pi$ and share a common discrete parameter $\delta$.
The weight of a representation of $K$ is either trivial or parameterized either by a single integer $2 \le k_1 \in \N$ or two integers $2 \le k_2 \le k_1 \in \N$.
For such weights, define $\Lambda=(0,0,0,0)$, $\Lambda=\paren{\tfrac{k_1-1}{2},-\tfrac{k_1-1}{2},0,0}$ or $\Lambda=\paren{\tfrac{k_1-1}{2},-\tfrac{k_1-1}{2},\tfrac{k_2-1}{2},-\tfrac{k_2-1}{2}}$, respectively, and we require $\delta_1+\delta_2\equiv k_1 \pmod{2}$ in the second and third case and $\delta_3+\delta_4\equiv k_2 \pmod{4}$ in the third case (see \cite[eq. (23)]{GLnI}).
We tend to, somewhat incorrectly, refer to $\Lambda$ as the weight of the representation, and the spectral parameters $\mu$ of automorphic forms in the family associated to $\Lambda$ and $\delta$ satisfy $\abs{\Re(\mu_i)-\Lambda_i} < \frac{1}{2}$ for $i=1,\ldots,4$ with $\mu_1-\Lambda_1=\mu_2-\Lambda_2$ in the second and third cases, and $\mu_3-\Lambda_3=\mu_4-\Lambda_4$ in the third case.
The tempered forms have $\Re(\mu_i)=\Lambda_i$, $i=1,\ldots,4$, and we take $i \mathfrak{a}_0^*(\Lambda)$ to be the set of all such $\mu$.
The principal series case is then $\Lambda=0$, while the spherical case is $\Lambda=\delta=0$.

Write $\dspecmu$ for the spectral (Plancherel) measure on $i \mathfrak{a}_0^*(\Lambda)$.
With this notation, the kernel integral appearing on the arithmetic/geometric side of the $GL(n)$ Kuznetsov formula (see \cite[Sect. 3]{GLnI}) is
\begin{align}
\label{eq:HwStarDef}
	I_{0,0}(y) H_w^*(f,y,g) =& \int_{\wbar{U}_w(\R)} \int_{i \mathfrak{a}_0^*(\Lambda)} f(\mu) W_\sigma(ywug,\mu,\delta) \dspecmu \, \wbar{\psi_I(u)} du,
\end{align}
for a ``nice'' (e.g. Schwartz-class and holomorphic on a tube domain containing $i \mathfrak{a}_0^*(\Lambda)$) test function $f(\mu)$, and $y,g \in G$.
We generally restrict to $y$ in that subset $Y_w$ of the diagonal matrices $Y$ where the integral is well-defined, viewing $\wbar{U}_w$ as $U/U_w$.

We note that Proposition 2 of \cite{GLnI} should read
\begin{align*}
	K_w(y,\mu,\delta) =& K_{w^\iota}(\tilde{v}y^\iota,-\mu^{w_l},\delta^{w_l}),
\end{align*}
where $\tilde{v} = v w v w^{-1}$ with $v \in Y$ having all coordinates $v_i = -1$, $i=1,\ldots,n$.
(The involution $w^\iota$ is missing from the subscript in that paper.)

\subsection{The conjectures}
\label{sect:Conjectures}
We summarize the ideas of \cite[Sect. 4 and 5.5.1]{GLnI} as follows
\begin{itemize}
\item \emph{The Interchange of Integrals Conjecture.}
If we were able to interchange the $u$ and $\mu$ integals in $H_w^*$ we could conclude
\begin{align}
\label{eq:HwStarDef}
	I_{0,0}(y) H_w^*(f,y,g) = \int_{i \mathfrak{a}_0^*(\Lambda)} f(\mu) K_w(y,\Lambda,\mu,\delta) W_\sigma(g,\mu,\delta) \dspecmu,
\end{align}
for some function $K_w(y,\Lambda,\mu,\delta)$ defined by 
\begin{align}
\label{eq:KwDef}
	K_w(y,\Lambda,\mu,\delta) W_\sigma(g,\mu,\delta) = \int_{\wbar{U}_w(\R)} W_\sigma(ywug,\mu,\delta) \wbar{\psi_I(u)} du.
\end{align}
Here we have applied \cite[Prop. 1]{GLnI} which uses Shalika's multiplicity one theorem to separate the functions $K_w$ and $W_\sigma$, and shows that $K_w$ will not depend on $\sigma$ by considering the action of the Lie algebra.

The latter integral does not converge absolutely, so we cannot apply the theorems of Fubini and Tonelli. Instead we interpret the integral conditionally by considering a particular choice of coordinates on $u$, introducing a smooth partition of unity, and treating the smooth, compactly supported integral as an oscillatory integral.
The conjecture is then that the series in the dyadic partition converges absolutely (the weak conjecture), which allows the interchange, and hopefully converges rapidly (the strong conjecture).

\begin{thm}
The $GL(4)$ Bessel functions $K_w(y,\Lambda,\mu,\delta)$ at $w=w_{121},w_{211}$ and $w=w_l$ and the $GL(n)$ Bessel functions at $w=w_{n,1}$ satisfy the Strong Interchange of Integrals Conjecture.
The Bessel function at $w=w_{22}$ satisfies the Weak Interchange of Integrals Conjecture.
\end{thm}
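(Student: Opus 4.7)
The plan is to fix coordinates on $\wbar{U}_w(\R)$ adapted to the block structure of $w$, introduce a smooth dyadic partition of unity in those coordinates, and thereby reinterpret the inner integral of \eqref{eq:KwDef} as a sum---over dyadic boxes $B_{\vec j}$ indexed by height vectors $\vec j$---of smooth, compactly supported oscillatory integrals. Within each $B_{\vec j}$, Fubini--Tonelli legitimately interchanges the $\mu$- and $u$-integrals, so the Interchange of Integrals Conjecture reduces to showing that, after performing the $u$-integral first on each piece, the sum over $\vec j$ converges absolutely (Weak) or with rapid decay in $\vec j$ (Strong).

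The key technical step is to bound each dyadic contribution by exploiting the oscillation of $\wbar{\psi_I(u)}$ combined with the oscillation coming from the power function $I^I_{\mu,\delta,\sigma}(w_l u \cdot)$ appearing in $W_\sigma(ywug,\mu,\delta)$. Repeated integration by parts in an appropriately chosen coordinate of $u$ should yield a power saving in the dyadic height $\vec j$. The subtlety is that the best direction for integration by parts depends on $\vec j$: one must identify, for each $\vec j$, at least one coordinate along which a partial derivative of the total phase is large enough to yield non-trivial savings. This turns the convergence problem into verifying a Boolean combination of linear inequalities in the exponents $j_{i,k}$ encoding the dyadic heights, with one clause per candidate direction.

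For the family $w=w_{n,1}$, $\wbar{U}_w$ consists of a single column of essentially free coordinates, and the case analysis collapses to a short, direct argument in any dimension. For $w_{121}$ and $w_{211}$, the combinatorics, while more elaborate, remain tractable by hand. The principal obstacle is the long Weyl element $w_l$: here the Boolean combination is vast, and I would pass to a symbolic computation package (as described in \cref{sect:IoI}) to simplify it down to a short list of borderline vectors $\vec j$ on which no single direction is immediately effective. Each remaining configuration is then treated by hand, either by a sharper inspection of the phase or by regrouping adjacent dyadic cells. Finally, for $w_{22}$, the analogous analysis still gives absolute convergence, but in certain extremal configurations the decay in $\vec j$ is only marginal rather than rapid, yielding the Weak (but apparently not the Strong) conjecture; I would exhibit those extremal configurations explicitly as witnesses to the caveat announced in the abstract.
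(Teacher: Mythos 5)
Your proposal follows essentially the same route as the paper's Section~\ref{sect:IoI}: dyadic partition of unity in coordinates on $\wbar{U}_w(\R)$, stationary/non-stationary phase analysis on each box, a Boolean combination of linear inequalities in the dyadic heights, symbolic reduction of that combination, and a residual hand analysis of the few surviving configurations (including the marginal ones for $w_{22}$ that give only the Weak conjecture). A few technical points you elide deserve emphasis, because they are what make the strategy close. First, the phase is extracted via the \emph{Iwasawa decomposition} $ywxt=x^*y^*k^*$: by equivariance of the Whittaker function, $W_\sigma(ywxt,\mu,\delta)=\psi_I(x^*)W_\sigma(y^*,\mu,\delta)\sigma(k^*)$, and the phase $\phi(x)$ is precisely the argument of $\psi_I(x^*)\wbar{\psi_I(x)}$; it is not ``the oscillation coming from the power function.'' Second, the factor $W_\sigma(y^*,\mu,\delta)\sigma(k^*)$ is \emph{not} an oscillatory contribution to be exploited by integration by parts -- it is rolled into the smooth weight, and the crucial input from the Whittaker function is its super-polynomial \emph{decay} unless $y^*_i\ll 1$. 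Those decay constraints are the initial conditions of the Boolean system; drop them and the set of dyadic heights is unbounded and the case analysis does not close. Third, the paper invokes a specific non-stationary-phase lemma (essentially \cite[Lem.~8.1]{BKY01}, here \cref{cor:BKYLemma}) tailored to phases built from the fixed menu of elementary factors that arise in the Iwasawa decomposition, rather than generic integration by parts; and the symbolic reduction is applied to all non-$w_{n,1}$ Weyl elements ($w_{22}$, $w_{121}$, $w_{211}$, $w_l$), not only $w_l$ -- the author reports that $w_{121}$ and $w_{211}$ are hand-tractable in principle but not in publishable length. With those corrections of emphasis your outline is sound.
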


\item \emph{The Analytic Continuation Conjecture.}
The Strong Interchange of Integrals Conjecture implies the function $K_w(y,\Lambda,\mu,\delta)$ does not depend on $\Lambda$, as this is true of the Whittaker function, which, with rapid convergence, implies that \eqref{eq:KwDef} at different $\Lambda$ is given by analytic continuation from $\Lambda=0$ (see \cite[Prop. 3]{GLnI}).
We drop $\Lambda$ from the notation and write $K_w(y,\mu,\delta)$.
Furthermore, the leftover Whittaker function can be removed from the Kuznetsov formulas by a Stone-Weierstrass-type theorem so that $H_w^*(f,y,g)$ becomes
\begin{align}
\label{eq:HwDef}
	I_{0,0}(y) H_w(f,y) = \int_{i \mathfrak{a}_0^*(\Lambda)} f(\mu) K_w(y,\mu,\delta) \dspecmu.
\end{align}

\begin{thm}
The $GL(4)$ Bessel functions satisfy the Analytic Continuation Conjecture.
\end{thm}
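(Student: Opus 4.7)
The plan is to split the argument according to which version of the Interchange of Integrals Conjecture has been established for the relevant Weyl element. For $w\in\{w_{31},w_{121},w_{211},w_l\}$ the preceding theorem gives the Strong form, and I would reproduce the template of \cite[Prop.~3]{GLnI}: rapid convergence of the dyadic decomposition of $\wbar U_w$ permits differentiating \eqref{eq:KwDef} under the integral sign in $\mu$; the integrand $W_\sigma(ywug,\mu,\delta)$ is holomorphic in $\mu$ on a tube neighborhood independent of $\Lambda$, and its dependence on $\Lambda$ is itself by analytic continuation of the induced character data. Hence $K_w(y,\Lambda,\mu,\delta)$ is holomorphic in $\mu$ and agrees, for each fixed $y$ and $\delta$, with its $\Lambda=0$ instance on the common domain of holomorphy. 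The passage from \eqref{eq:HwStarDef} to \eqref{eq:HwDef} then follows by Fubini, which is justified by the now absolutely convergent double integral.

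The only case that genuinely requires a different argument is $w=w_{22}$, for which only Weak Interchange is available: the $u$-integral is merely conditionally convergent, and we cannot move $\partial/\partial\mu$ past it. My plan is to bridge the two values of $\Lambda$ through the Mellin-Barnes integral \eqref{eq:Kw22MB} which is established in \cref{sect:IntRepns}. Write $\mathcal{K}_{22}(y,\mu,\delta)$ for the right-hand side of \eqref{eq:Kw22MB}; its gamma-factor structure shows that after a small contour shift it is holomorphic in $\mu$ on a tube neighborhood of $i\mathfrak{a}_0^*(\Lambda)$ for any fixed $\Lambda$. At $\Lambda=0$ the identity $K_{22}(y,0,\mu,\delta)=\mathcal{K}_{22}(y,\mu,\delta)$ is part of that later Mellin-Barnes theorem, proven there by matching the contour-shifted integral against the Frobenius series \eqref{eq:Jw22} via the Asymptotics Theorem of \cref{sect:Asymptotics}.

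To transfer this equality to arbitrary $\Lambda$, observe that both sides are annihilated by the Bessel differential operator \eqref{eq:w22DE}: this operator is extracted from the center of $U(\mathfrak{g})$ acting on $W_\sigma$, and so is the same for every $\Lambda$, while the annihilation of $\mathcal{K}_{22}$ is a contour-shift verification. For generic $\mu$, the Frobenius series construction combined with the Asymptotics Theorem characterizes the Bessel function uniquely within the space of solutions with the prescribed leading behavior as $y_2\to 0$. Consequently $K_{22}(y,\Lambda,\mu,\delta)=c(\mu,\delta)\mathcal{K}_{22}(y,\mu,\delta)$ with a scalar $c$ independent of $\Lambda$, and the initial case $\Lambda=0$ forces $c\equiv 1$; the independence of $\Lambda$ follows, and \eqref{eq:HwDef} is then obtained exactly as in the Strong case. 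The main obstacle is precisely this last matching step: one must verify that the conditional convergence guaranteed by Weak Interchange is strong enough for the leading Frobenius coefficient of $K_{22}(y,\Lambda,\mu,\delta)$ to be read off from \eqref{eq:KwDef} and compared to that of $\mathcal{K}_{22}$. Here we cannot appeal to dominated convergence; instead, I would exploit the specific oscillatory structure in the coordinate on $\wbar U_{w_{22}}$ transverse to $y_2$, so that integration by parts in that coordinate alone extracts the leading $\abs{y_2}^{2+\mu_1+\mu_2}$ behavior with an explicitly computable prefactor, uniformly in $\Lambda$.
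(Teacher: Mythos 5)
Your handling of the Strong Interchange cases $w\in\{w_{31},w_{121},w_{211},w_l\}$ matches the paper's route through \cite[Prop.~3]{GLnI} and is correct, so I will focus on $w_{22}$.

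Your plan to bridge $\Lambda=0$ to $\Lambda\ne0$ via the Mellin--Barnes integral $\mathcal{K}_{22}$ is close in spirit to what the paper does (Section \ref{sect:w22hiccup} compares Frobenius expansions of $K_{22}(y,\Lambda,\cdot)$ with the known $\Lambda=0$ expansion), but there is a gap you have not addressed: on $i\mathfrak{a}_0^*(\Lambda)$ with $\Lambda\ne 0$ the parameter $\mu$ is forced to satisfy $\mu_1-\mu_2=k_1-1\in\Z$ (and in Case II also $\mu_3-\mu_4=k_2-1\in\Z$). This is \emph{precisely} the non-generic situation in which the Frobenius basis degenerates, since the indicial exponents $2+\mu_i+\mu_j$ differ by integers and several of the series $J_{22}(y,\mu^{w'})$ coincide up to sign. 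Your appeal to ``for generic $\mu$, the Frobenius series construction \ldots\ characterizes the Bessel function uniquely'' therefore cannot apply on the relevant locus; the paper handles this by explicitly constructing the logarithmic solutions $Y_{22,1},\ldots,Y_{22,4}$ and verifying that the solution space remains six-dimensional. Without something like this, your claim that both $K_{22}(y,\Lambda,\cdot)$ and $\mathcal{K}_{22}$ ``are determined'' by their Frobenius data is unsupported.

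Two further issues. First, matching only ``the leading Frobenius coefficient'' as $y_2\to0$ does not determine a solution uniquely in a six-dimensional solution space: on $\Re(\mu)=\Lambda$ several of the exponents $2+\Re(\mu_i+\mu_j)$ are equal, so the small-$y_2$ behavior is a superposition; you need to match \emph{all} of the expansion coefficients, or equivalently six independent conditions. Second, the step ``$K_{22}(y,\Lambda,\mu,\delta)=c(\mu,\delta)\mathcal{K}_{22}$ with a scalar $c$ independent of $\Lambda$, and the initial case $\Lambda=0$ forces $c\equiv1$'' begs the question: the tube domains for distinct $\Lambda$ are disjoint affine subspaces of $\mu$-space (with $\Lambda\ne0$ being strictly lower-dimensional because of the additional linear constraint), so any $c$ you obtain a priori lives on a different domain for each $\Lambda$; there is no analytic continuation argument that automatically makes it $\Lambda$-independent. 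What you actually need --- and what the paper does --- is to directly read off the Frobenius data of $K_{22}(y,\Lambda,\mu,\delta)$ from \eqref{eq:KwDef} at fixed $\Lambda$, by expanding both $\e{y_2^* x_4}$ and the Whittaker function (as a function of $y_2^*$) into power series and noting that the higher terms improve the convergence of the $\wbar U_{22}(\R)$ integral, then compare to the Whittaker function's known Frobenius expansion (which \emph{is} an analytic continuation in $\Lambda$, per \cite[Sect.~9.2]{GLnI}). Your proposed integration-by-parts alternative may also work for extracting the coefficients, but as written it is a sketch that extracts only one coefficient, which is insufficient.
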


\item \emph{The Differential Equations and Power Series Conjecture.}
As the (iterated) integrals \eqref{eq:HwStarDef} and \eqref{eq:HwStarDef} \emph{do} converge absolutely, it follows (by taking $f$ to be an approximation to the identity) that $K_w$ should satisfy some partial differential equations inherited from the action of the Casimir operators $\Delta_i$, $i=2,\ldots,n$, on the Whittaker function.
These differential equations, which are conjecturally independent of $\delta$, can then be solved through the method of Frobenius to arrive at power series (really Frobenius series) solutions $J_w(y,\mu)$ and then $K_w$ must be a linear combination of those, but there are some complications for $w$ different from the long Weyl element $w_l$.

We extend $K_w$ from $Y_w$ to $G_w := U(\R)Y_w (w \wbar{U}_w(\R) w^{-1})$ by the natural symmetry
\begin{align}
\label{eq:KwSymmetry}
	K_w\paren{uy(wu'w^{-1}),\mu,\delta} =& \psi_I(uu') K_w(y,\mu,\delta),
\end{align}
but still find ourselves short of symmetries.
That is, for $w \ne w_l$, the total number of coordinates in $G_w$ is less than that of $G$ and so a naive application of a Casimir operator would result in a differential equation involving Bessel-type functions defined using the derivatives of the Whittaker function.

This conjecture is then that there are operators which avoid the missing coordinates, and below the statement of the conjecture in \cite[Sect. 4]{GLnI}, the author suggests looking for elements of the form
\[ \sum_{i=2}^n a_i X_i \circ (\Delta_i-\lambda_i) \]
with $a_i = a_i(\lambda_2,\ldots,\lambda_n) \in \C[y_1,\ldots,y_{n-1}]$ and $X_i$ an element of the commutative algebra
\[ \C[\Delta_2,\ldots,\Delta_n,E_{1,1},\ldots,E_{n-1,n-1}], \]
using the usual basis $E_{i,j}$ of the Lie algebra of $G$; in the current paper, it seems that the coefficients $a_i$ depending on $y$ are, in fact, not necessary.
The conjecture also predicts the number of solutions to be $\abs{W/W_w}$ where $W_w$ are those Weyl elements that fix all $y \in Y_w$ under conjugation.

\item \emph{The Asymptotics Theorem} of \cite{GLnI} computes the coefficients involved in writing $K_w$ as a linear combination of the $J_w$ series at $\Lambda=0$, assuming all of the conjectures; these are the first-term asymptotics of $K_w$ as $y \to 0$ in $Y_w$.

\item \emph{The Direct Integral Representation Conjecture.}
Section 5.5.1 of \cite{GLnI} conjectures that $K_w(y,\mu,\delta)=C(\mu,\delta) \mathcal{I}_w(y,\mu,\delta)$ for some constant $C(\mu,\delta)$ where
\[ \mathcal{I}_w(y,\mu,\delta) = \int_{\wbar{U}_w(\R)} I^W_{\mu,\delta}(y,w,u) \wbar{\psi_I(u)} du. \]
On the results of this paper, the author would now strengthen the conjecture to $C(\mu,\delta)=1$.
As usual, this integral does not converge absolutely, but it does converge in a nice enough manner for a nice choice of coordinates.

\item \emph{The Mellin-Barnes Integrals Conjecture.}
We extend the usual definition of a Mellin-Barnes integral a bit to include finite sums of multi-dimensional inverse-Mellin transforms of quotients of gamma functions.
(Some of the variables in the inverse-Mellin transforms will actually be one.)
It is a fact that the $J_w$ functions can be written as Mellin-Barnes-type integrals where the contours are not vertical, but horizontal; the conjecture here is that certain linear combinations of the $J_w$ function, and in particular the $K_w$ function, can be written as Mellin-Barnes integrals where the contours are vertical, though still bent slightly for convergence.
\end{itemize}

\subsection{Caveats}
Now, where we need exceptions to the conjectures:
\begin{enumerate}
\item The higher weight case for $w_{22}$.
Though we prove the Weak Interchange of Integrals (existence of an integral kernel for the defining integral transform) for the $w_{22}$ Weyl element, it appears the Strong Interchange of Integrals (rapid convergence of the defining integral) is likely just false in this case.
In hindsight, this is perhaps not too surprising and follows from two difficulties:
$\wbar{U}_{22}$ is symmetric about the reverse diagonal and has minimal presence (a single nonzero entry) on the first super-diagonal, leading to a small number of terms in the phase of the oscillatory integral.

The computations given here cover the principal-series (i.e. $\Lambda=0$) case, but since the Weak Interchange of Integrals does not imply the Analytic Continuation Conjecture, this means we need to extend the Asymptotics Theorem to the generalized principal series case and argue that the Differential Equations and Power Series Conjecture holds in that case, as well; this is the approach we use for $w_{22}$ in \cref{sect:w22hiccup}.
For the former, the author only retreated in face of notational complexities (and the hope that it was unnecessary), and the latter should also pose no trouble as this is a standard fact about Frobenius series solutions.

\item The complete solution to the differential equations in the $w_{211}$ case.
In the other cases, we are able to show the Frobenius series span the solution set by dimensionality arguments -- i.e. showing the dimension of the solution space is equal to the number (dimension) of Frobenius series solutions, but for $w_{211}$, though we have a complete set of Frobenius series solutions, the author was unable to compute the dimension of the full space of solutions.

It may be possible to show (directly from the definition) that the Bessel functions are linear combinations of Frobenius series, as one would expect.
Then clearly those Frobenius series must be solutions of the differential equations -- this bypasses the need to discuss the full solution space.

One might hope to accomplish this by leveraging the Interchange of Integrals against the known power series and Mellin-Barnes expansions of the Whittaker function (using \cite[eq. (49)]{GLnI}) and the complex exponential, but the author was unable to solve a technical detail in this approach, and instead \cref{sect:w211hiccup} proceeds by a (messy) Mellin-Barnes integral representation for the product of the Bessel and Whittaker functions.
\end{enumerate}
The proofs in \cref{sect:hiccups} are less than precise because the full proof would be quite lengthy, and in any case, we anticipate subsequent papers will make these issues irrelevant.

Finally, two remarks:
\begin{enumerate}[topsep=0pt]
\item The goal for applications are the integral representations as in \cref{sect:IntRepns}.
For every Weyl element of $GL(4)$, as described above, we show that the integral representation of \cite[Sect. 5.5.1]{GLnI} holds and in fact, it holds with $C(\mu,\delta)=1$.
(Though we don't prove it here, the same applies for $GL(3)$ and $GL(2)$.)
A simple heuristic justification is that the Bruhat-based $I^B_{\mu,\delta}$ power function (see \cref{sect:Chars}) is a limit of $K$-finite power functions $I^I_{\mu,\delta,\sigma}$ and the Jacquet integral of $I^B_{\mu,\delta}$ is essentially the Whittaker-like power function $I^W_{\mu,\delta}$.
Thus the integral representation of the Bessel function would follow, if one could justify the necessary interchanges of this limit and the relevant integrals.
It seems easier, in the general case, to simply prove the heuristic.

Also, one might consider to simply construct the Kuznetsov formula by replacing the Whittaker function in the inverse Whittaker transform (see \cite[Sect. 3]{GLnI}) with $I^W_{\mu,\delta}$.
This would avoid the Interchange of Integrals Conjecture, but of course, this also loses Whittaker inversion from Wallach's Whittaker Plancherel formula \cite{Wallach} (and may interfere with the localization) and so the spectral side of the formula would then require a similar interchange of limit process.

\item The power series for the higher-rank Bessel functions should tend to involve multi-dimensional hypergeometric functions at 1.
Because of this, it may seem like the arguments here could not be generalized, but the author would like to point out that the well-known contiguous relations for one-dimensional hypergeometric series imply the same for the multi-dimensional case.
\end{enumerate}

\section{Background}
We use coordinates
\begingroup
\allowdisplaybreaks
\begin{align}
\label{eq:GwlCoordsX}
	x=X(x_1,x_2,x_3,x_4,x_5,x_6):=&\Matrix{1&x_1&x_2&x_4\\&1&x_3&x_5\\&&1&x_6\\&&&1}, \\ \label{eq:GwlCoordsY}
	y=Y(y_1,y_2,y_3,y_4):=&\Matrix{y_1 y_2 y_3 y_4\\&y_2 y_3 y_4\\&&y_3 y_4\\&&&y_4},
\end{align}
\endgroup
as well as
\[ K_{ij}(x) := \Matrix{I_{i-1}\\&\frac{1}{\sqrt{1+x^2}}&&-\frac{x}{\sqrt{1+x^2}}\\&&I_{j-i-1}\\&\frac{x}{\sqrt{1+x^2}}&&\frac{1}{\sqrt{1+x^2}}\\&&&&I_{4-j}}. \]

On $Y$, it is sometimes useful to instead apply the coordinates
\begin{align}
\label{eq:GwlCoordsA}
	y = A(a_1,a_2,a_3,a_4) := \Matrix{a_1/a_2\\&a_2/a_3\\&&a_3/a_4\\&&&a_4}.
\end{align}

The Weyl elements are permutation matrices, and when describing elements of $W \cong S_4$ we occasionally use cycle notation, e.g.
\[ w_{(1\,4\,2\,3)} = \Matrix{&&1\\&&&1\\&1\\1} = w_{2,1,1}, \]
which comes from the action on the standard basis
\[ e_{w_\sigma(i)} := w_\sigma e_i = e_{\sigma(i)} = e_{i^{\sigma^{-1}}} = e_{i^{w_\sigma^{-1}}}, \qquad \sigma \in S_4. \]

\subsection{Characters}
\label{sect:Chars}
We note that characters of $U(\R)$ have the form
\[ \psi_m\Matrix{1&x_1&x_2&x_4\\&1&x_3&x_5\\&&1&x_6\\&&&1} = \e{m_1 x_1+m_2 x_3+m_3 x_6}, \qquad \e{t} = e^{2\pi i t}, \]
for some $m \in \R^3$.
For $y \in Y$, we also use the notation $\psi_y(x) = \psi_I(yxy^{-1}) = \psi_{1,1,1}(yxy^{-1})$.

For the multiplicative character $\chi_s^\ell$ of $\R^\times$, notice that we have the usual properties of power functions
\[ \chi_s^\ell(a) \chi_{s'}^{\ell'}(a) = \chi_{s+s'}^{\ell+\ell'}(a), \qquad \chi_s^\ell(a) \chi_s^\ell(a') = \chi_s^\ell(a a'), \]
and all of the functions $I^I_{\mu,\delta}$, $I^B_{\mu,\delta}$, and $I^W_{\mu,\delta}$ are eigenfunctions of the Casimir operators, essentially for the same reason:
In an expression
\[ E_{i_1, i_2} \circ E_{i_2, i_3} \circ \ldots \circ E_{i_r,i_1}, \]
if any $E_{ij}$ has $j>i$, then another must have $j<i$, and vice versa, but after conjugating across $\trans{u}$ or $k$ (as we may with bi-$G$-invariant operators), the restrictions to $UY$ have $E_{ij} I^I_{\mu,\delta,\sigma} = 0$ if $j>i$, $E_{ij} I^W_{\mu,\delta}=0$ if $j < i$ and $E_{ij} I^B_{\mu,\delta} = 0$ if $j \ne i$.

We sometimes express the power function in the coordinates \eqref{eq:GwlCoordsA} as
\begin{align*}
	I_{\mu,\delta}(A(\pm1,a_2,a_3,a_4)) &= (\pm1)^{\delta_1} \wtilde{I}_{\mu,\delta}(a_2,a_3,a_4).
\end{align*}

\subsection{Gamma Functions}
\label{sect:GammaFunctions}
In addition to the $G_\delta(s)$ function defined above, we will need
\[ R_\eta(s) = i^{\eta'} \cos\frac{\pi (s-\eta')}{2}. \]
Notice that
\begin{align}
\label{eq:GdToRGamma}
	G_\eta(s) =& 2 (2\pi)^{-s} R_\eta(s) \Gamma(s).
\end{align}

From the reflection property and known residues of the gamma function as well as the usual properties of trigonometric functions, we have the following useful identities:
\begingroup
\allowdisplaybreaks
\begin{align}
\label{eq:GdShift}
	G_\ell(s,t,\eta) =& G_\ell(s+u,(t_1-u,\ldots,t_n-u),\eta), \\
\label{eq:GdReflect}
	G_\eta(s) G_\eta(1-s) =& (-1)^\eta \\
\label{eq:RdShift}
	R_\eta(s+n) =& i^n R_{\eta+n}(s), \qquad n \in \Z, \\
\label{eq:RdReflect}
	R_\eta(s) =& (-1)^\eta R_\eta(-s), \\
	R_\eta(s) R_\eta(1-s) =& \frac{1}{2}(-1)^\eta \sin(\pi s), \\
	\res_{s=n} \frac{1}{R_\eta(s)} =& \piecewise{\frac{2i}{\pi} (-1)^\eta i^n & \If n \equiv \eta+1 \pmod{2}, \\ 0 & \Otherwise,} \qquad n \in \Z, \\
\label{eq:RGtoPoch}
	R_\eta(s) \Gamma(s+n) =& R_\eta(s) \Gamma(s) \Poch{s}{j} = \frac{1}{2} (2\pi)^s G_\eta(s) \Poch{s}{j},
\end{align}
\endgroup
where $\Poch{s}{j} = \Gamma(s+j)/\Gamma(s)$ is the (rising) Pochhammer symbol.
Furthermore, $G_\eta(s)$ is entire except for simple poles at each $s=-n$, $n \in \N_0$, $n \equiv \eta\pmod{2}$ with residue
\[ \res_{s=-n} G_\eta(s) = 2\frac{(2 \pi i)^n}{n!}. \]
The poles of $1/G_\eta(s)$ are at $s=n$, $n \in \N$, $n\equiv \eta+1\pmod{2}$ by the symmetry \eqref{eq:GdReflect}.

We note that Stirling's formula implies
\begin{align*}
	\log\abs{\Gamma(\sigma+it)} =& \log\Gamma(\sigma)+\paren{\sigma-\frac{1}{2}}\log\abs{1+i\frac{t}{\sigma}}-\abs{t}\arctan\frac{\abs{t}}{\sigma}+\BigO{\frac{1}{\sigma}},
\end{align*}
for $\sigma > 0$ and $t \in \R$, so that
\begin{align}
\label{eq:Stirling}
	\abs{\Gamma(\sigma+it)} \asymp_\epsilon& \Gamma(\sigma) \abs{1+i\frac{t}{\sigma}}^{\sigma-\frac{1}{2}} \exp\paren{-\abs{t}\arctan\frac{\abs{t}}{\sigma}},
\end{align}
for $\sigma > \epsilon > 0$ and $t \in \R$.
We refer to $\Gamma(\sigma)$ as the ``factorial part'', the factor $\abs{1+i\frac{t}{\sigma}}^{\sigma-\frac{1}{2}}$ as the ``polynomial part'', and the factor $\exp\paren{-\abs{t}\arctan\frac{\abs{t}}{\sigma}}$ as the exponential part of Stirling's formula.

\subsection{Hypergeometric Series}
The (generalized) hypergeometric series for $p,q \in \N_0$, $a \in \C^p, b \in \C^q, z \in \C$ is defined by
\[ \pFq{p}{q}{a_1,\ldots,a_p}{b_1,\ldots,b_q}{z} := \sum_{n=0}^\infty \frac{z^n}{n!} \frac{\Poch{a_1}{n}\cdots\Poch{a_p}{n}}{\Poch{b_1}{n}\cdots\Poch{b_q}{n}}, \]
provided the series converges.
We sometimes use the completed (``regularized'') series
\[ \pFqStar{p}{q}{a_1,\ldots,a_p}{b_1,\ldots,b_q}{z} := \sum_{n=0}^\infty \frac{z^n}{n!} \frac{\Poch{a_1}{n}\cdots\Poch{a_p}{n}}{\Gamma(b_1+n)\cdots\Gamma(b_q+n)} = \pFq{p}{q}{a_1,\ldots,a_p}{b_1,\ldots,b_q}{z}/\prod_{i=1}^q \Gamma(b_i), \]
which has no poles (when it converges).
Finally, the fully completed
\[ \pFqDagger{p}{q}{a_1,\ldots,a_p}{b_1,\ldots,b_q}{z} := \paren{\prod_{i=1}^p \Gamma(s)} \pFqStar{p}{q}{a_1,\ldots,a_p}{b_1,\ldots,b_q}{z}, \]
will prove convenient for recurrence relations.

Some useful hypergeometric identities are
\begin{align}
\label{eq:Gauss2F1}
	\pFqStar21{a_1,a_2}{b_1}{1} =& \frac{\Gamma(b_1-a_1-a_2)}{\Gamma(b_1-a_1)\Gamma(b_1-a_2)} \qquad \text{\cite[eq. (1)]{Bailey}}, \\
\label{eq:pFqFirstZero}
	\pFq{p}{q}{0,a_2,\ldots,a_p}{b_1,\ldots,b_q}{z} =& 1,
\end{align}
for $-1 \le n \in \Z$ with $a,b,c,d,e,f \notin \Z$,
\begin{align}
\label{eq:4F3Denom1}
	& \pFqStar43{a,b,c,d}{-n,e,f}{1} \\
	&= \sum_{k=n+1}^\infty \frac{\Poch{a}{k} \Poch{b}{k} \Poch{c}{k} \Poch{d}{k}}{\Gamma(k+1) \Gamma(k-n) \Gamma(e+k) \Gamma(f+k)} \nonumber \\
	&= \sum_{k=0}^\infty \frac{\Gamma(a+k+n+1) \Gamma(b+k+n+1) \Gamma(c+k+n+1) \Gamma(d+k+n+1)}{\Gamma(k+n+2) \Gamma(k+1) \Gamma(e+k+n+1) \Gamma(f+k+n+1) \Gamma(a)\Gamma(b)\Gamma(c)\Gamma(d)} \nonumber \\
	&= \Poch{a}{n+1}\Poch{b}{n+1}\Poch{c}{n+1}\Poch{d}{n+1} \, \pFqStar43{1+n+a,1+n+b,1+n+c,1+n+d}{2+n,1+n+e,1+n+f}{1}, \nonumber
\end{align}
for $m,n \in \N_0$ with $a,b,c,d,e \notin \Z$,
\begin{align}
\label{eq:4F3Denom2}
	\pFqStar43{-m,a,b,c}{-m-n,d,e}{1}=0.
\end{align}

\subsection{Mellin-Barnes Integrals}
If we define the signed Mellin transform
\begin{equation}
\label{eq:SignedMDef}
\begin{aligned}
	\hat{f}(s,\ell) :=& \int_{\R} f(x) \chi_{s-1}^\ell(x) dx \\
	=& \int_0^\infty \paren{f(x) +(-1)^\ell f(-x)} x^{s-1} dx,
\end{aligned}
\end{equation}
then Mellin inversion becomes
\begin{align}
\label{eq:SignedMInv}
	f(x) = \frac{1}{2} \sum_{\ell \in \set{0,1}} \int_{\Re(s)=\frac{1}{2}} \hat{f}(s,\ell) \chi_{-s}^\ell(x) \frac{ds}{2\pi i}.
\end{align}

As in \cite[eq. (3.17)]{HWI}, we have
\begin{equation}
\label{eq:PsiThetaInvMellin}
\begin{aligned}
	\e{x} =& \lim_{\theta\to\frac{\pi}{2}^-} \int_{\Re(s) = c} \abs{2\pi x}^{-s} e^{is\theta \sgn(x)} \Gamma\paren{s} \, \frac{ds}{2\pi i}, \\
	=& \lim_{\theta\to\frac{\pi}{2}^-} \frac{1}{2}\sum_{\ell \in \set{0,1}} \int_{\Re(s) = c} \frac{\cos(\theta s-\frac{\pi}{2}\ell)}{R_\ell(s)} \chi_{-s}^\ell(x) G_\ell(s) \, \frac{ds}{2\pi i}
\end{aligned}
\end{equation}
for $x \ne 0$ and $c > 0$, so in a formal sense,
\[ \what{e}(s,\ell) = G_\ell(s). \]

For $a,b\in\R^\times$ and $\abs{\Re(\mu)} < 1$,
\begin{align}
\label{eq:mcZInt}
	\int_{-\infty}^\infty \chi_{s-1}^\eta(x) \e{a x+\frac{b}{x}} dx =& \chi_{-\frac{s}{2}}^\eta(a) \chi_{\frac{s}{2}}^0(b) \mathcal{Z}_s^\eta(ab).
\end{align}
which follows from \cite[3.871.1-4, 8.403.1]{GradRyzh}.
Note that $\mathcal{Z}_s^{\eta+2}(a)=\mathcal{Z}_s^\eta(a)$.

For $a>0$ and $\abs{\Re(s)} < 2c < 1$, by \cite[17.43.16, 17.43.18]{GradRyzh},
\begin{align*}
	\mathcal{Z}_t^\eta(-a) =& 2 i^\delta \int_{\Re(s)=c} \cos\frac{\pi}{2}\paren{t-\eta} \Gamma\paren{s+\frac{t}{2}} \Gamma\paren{s-\frac{t}{2}} \paren{4\pi^2 a}^{-s} \frac{ds}{2\pi i}, \\
	\mathcal{Z}_t^\eta(+a) =& 2 i^\delta \int_{\Re(s)=c} \cos\frac{\pi}{2}\paren{2s-\eta} \Gamma\paren{s+\frac{t}{2}} \Gamma\paren{s-\frac{t}{2}} \paren{4\pi^2 a}^{-s} \frac{ds}{2\pi i}.
\end{align*}
It follows that for $\eta \in \set{0,1}, \ell \in \Z$,
\begin{align*}
	\what{\mathcal{Z}}_t^\eta(s,\ell) =& G_\ell\paren{s,\paren{\frac{t}{2},-\frac{t}{2}},(\eta,0)}.
\end{align*}
Furthermore, for any $\delta_1,\delta_2 \in \set{0,1}, \ell \in \Z$, we have the more symmetric
\begin{align}
\label{eq:ZMellin}
	\what{\mathcal{Z}}_t^{\eta_1+\eta_2}(s,\ell+\eta_2) =& G_\ell\paren{s,\paren{\frac{t}{2},-\frac{t}{2}},(\eta_1,\eta_2)}.
\end{align}
Note: In the case of the $J$-Bessel function, i.e. $\mathcal{Z}_t^\eta(+a)$, the Mellin expansion converges absolutely if the tails of the contour are bent to pass to the left of $\Re(s)=0$, which implies the integral over $\Re(s)=c$ converges conditionally by contour shifting.
The $K$-Bessel function case has exponential convergence.

From the definition of the Euler beta function $B(a,b)$ and some trigonometry, we have
\begin{equation}
\label{eq:BetaEval}
\begin{aligned}
	& \int_{\R} \chi_{s_1-1}^{\eta_1}(x) \chi_{s_2-1}^{\eta_2}(1-x) dx \\
	&= B(s_1,s_2)+(-1)^{\eta_2} B(1-s_1-s_2,s_2)+(-1)^{\eta_1} B(s_1,1-s_1-s_2) \\
	&= \frac{G_{\eta_1}(s_1) G_{\eta_2}(s_2)}{G_{\eta_1+\eta_2}(s_1+s_2)}.
\end{aligned}
\end{equation}

\subsection{The $J$-Bessel Power Series}
Let $w \in W^\text{rel}$.
Define $Y_w$ to be the set of $Y$ matrices which satisfy the ``compatibility condition'' $\psi_y(w u w^{-1}) = \psi_I(u)$ for all $u \in U_w(\R)$, and set $G_w := U(\R)Y_w \paren{w \wbar{U}_w(\R) w^{-1}}$.
Then the $J$-Bessel power series (Frobenius series) is defined on $G_w$ as the solution to the Bessel differential equations \cite[Sect. 4]{GLnI} satisfying
\begin{align}
	J_w^*(uy(wu'w^{-1}), \mu) =& \psi_I(uu') J_w(y, \mu), & u \in U(\R), u' \in \wbar{U}_w(\R), \\
	J_w^*(y,\mu) \sim& I_{\mu,0}(y)
\end{align}
as $y \to 0$ along the (non-fixed) coordinates of $Y_w$.

Lastly, we define
\begin{align*}
	J_w(y,\mu) = J_w^*(y,\mu)/\Lambda_w(\mu)
\end{align*}
and
\begin{align}
\label{eq:JwmudeltaDef}
	J_w(y,\mu,\delta) :=& I_{0,\delta}(y) J_w(y,\mu),
\end{align}
where
\begin{align}
	\Lambda_w(\mu^{w_l}) = \prod_{\substack{j<k\\ k^w < j^w}} (2\pi)^{\mu_j-\mu_k} \Gamma\paren{1+\mu_k-\mu_j}.
\end{align}

\subsection{The Asymptotics Theorem}
\label{sect:Asymptotics}
The Asymptotics Theorem of \cite[Sect. 4]{GLnI} is an evaluation of the constants $C_w(\mu,\delta)$ occurring in the power series expansion
\begin{align}
\label{eq:KwToJw}
	K_w(y,\mu,\delta) =& \sum_{w' \in W/W_w} C_w(\mu^{w'},\delta^{w'}) J_w(y,\mu^{w'},\delta^{w'}).
\end{align}
The theorem applies under the assumption of the Interchange of Integrals and Differential Equations and Power Series Conjectures, which are proved in this paper.
The somewhat unwieldy expression obtained is as follows:
Define
\[ C_w^*(\mu^{w_l},\delta^{w_l}) = \prod_{\substack{j < k\\k^w < j^w}} (-1)^{\delta_k} G_{\delta_j+\delta_k}(\mu_j-\mu_k), \]
then $C_w(\mu,\delta) = \Lambda_w(\mu) C_w^*(\mu,\delta)$.

Explicitly, these combine to
\begin{align}
\label{eq:CwToR}
	C_w(\mu,\delta) =& \prod_{(j,k) \in \mathcal{S}_w} \frac{(-1)^{\delta_j} \pi}{R_{\delta_j+\delta_k}(1+\mu_j-\mu_k)},
\end{align}
where
\begin{align*}
	\mathcal{S}_{31} =& \set{(1,4),(2,4),(3,4)}, \\
	\mathcal{S}_{22} =& \set{(1,3),(2,3),(1,4),(2,4)}, \\
	\mathcal{S}_{121} =& \set{(1,2),(1,3),(1,4),(2,4),(3,4)}, \\
	\mathcal{S}_{211} =& \set{(1,3),(1,4),(2,3),(2,4),(3,4)}, \\
	\mathcal{S}_{1111} =& \set{(1,2),(1,3),(1,4),(2,3),(2,4),(3,4)}.
\end{align*}

\section{Matrix Decompositions}
These can be obtained from the appendix to \cite{GLnI} for the Iwasawa decomposition, while the Bruhat decomposition can be obtained from the GL(n)pack of Broughan, which appears in an appendix to \cite{Gold01}, and they can be verified directly.

The subgroups of $U$ for each Weyl element are
\begin{gather*}
\begin{aligned}
	\wbar{U}_{31} =& \Matrix{1&x_1&x_2&x_4\\&1&0&0\\&&1&0\\&&&1}, &
	\wbar{U}_{22} =& \Matrix{1&0&x_2&x_4\\&1&x_3&x_5\\&&1&0\\&&&1}, \\
	\wbar{U}_{121} =& \Matrix{1&x_1&x_2&x_4\\&1&0&x_5\\&&1&x_6\\&&&1}, &
	\wbar{U}_{211} =& \Matrix{1&x_1&x_2&x_4\\&1&x_3&x_5\\&&1&0\\&&&1}
\end{aligned}
\end{gather*}

\subsection{Iwasawa Decompositions}
\label{sect:Iwasawa}
For convenience of notation, we write
\[ \xi_A := \prod_{i \in A} \sqrt{1+x_i^2}, \]
and we drop the set notation and commas so that, e.g.
\[ \xi_{136} = \sqrt{1+x_1^2}\sqrt{1+x_3^2}\sqrt{1+x_6^2}. \]

After carefully choosing the coordinates of $x \in \wbar{U}_w$ as in \cite[Sect. 8]{GLnI}, the Iwasawa decompositions of each $wx$ are
\begingroup
\allowdisplaybreaks
\begin{align*}
	& w_{31} X\paren{x_1,x_2\xi_1,0,x_4\xi_{12},0,0} = \\*
	& \qquad X\paren{-\frac{x_1 x_2}{\xi_1},-\frac{x_1 x_4}{\xi_{12}},-\frac{x_2 x_4}{\xi_2},\frac{x_1}{\xi_{124}^2},\frac{x_2}{\xi_1\xi_{24}^2}, \frac{x_4}{\xi_{12}\xi_4^2}} Y\paren{\frac{\xi_2}{\xi_1},\frac{\xi_4}{\xi_2}, \frac{1}{\xi_{12} \xi_4^2}, \xi_{124}} \\*
	& \qquad \times K_{34}(x_4) K_{24}(x_2) K_{14}(x_1) w_{31} \\
	& w_{22} X\paren{0,x_2 \xi_3, x_3, x_2 x_3 x_5+x_4\xi_{25},x_5 \xi_3, 0} = \\*
	& \qquad X\paren{-\frac{x_3 x_5}{\xi_3}-\frac{x_2 x_4 \xi_5}{\xi_{23}},-\frac{x_3 x_4 x_5}{\xi_{235}\xi_4^2}+\frac{x_2}{\xi_3 \xi_{24}^2},\frac{x_4}{\xi_{25} \xi_4^2}, \frac{x_3}{\xi_{35}^2},\frac{x_5}{\xi_3 \xi_5^2},\frac{x_2 x_3}{\xi_3}+\frac{x_4 x_5 \xi_2}{\xi_{35}}} \\*
	& \qquad \times Y\paren{\frac{\xi_{45}}{\xi_{23}},\frac{1}{\xi_{25}\xi_4^2}, \frac{\xi_{24}}{\xi_{35}},\xi_{35}} K_{23}(x_4) K_{13}(x_2) K_{24}(x_5) K_{14}(x_3) w_{22} \\
	& w_{121} X\paren{x_1, x_2\xi_1, 0, x_4 \xi_{12}, \frac{x_1 x_4- x_1 x_2 x_6\xi_4}{\xi_{12}}+\frac{x_5\xi_{46}}{\xi_1}, \frac{x_2 x_4+x_6 \xi_4}{\xi_2}} = \\*
	& \qquad X\paren{\frac{x_5 \xi_1}{\xi_{46} \xi_5^2},\frac{x_6\xi_2}{\xi_4 \xi_6^2},\frac{x_5 x_6 \xi_2}{\xi_{16}}-\frac{x_1 x_2}{\xi_1}, \frac{x_4}{\xi_{12}\xi_4^2}, \frac{x_1}{\xi_{12}^2} - \frac{x_1 x_2 x_4 x_6}{\xi_4 \xi_{12}^2}+\frac{x_4 x_5\xi_6}{\xi_{24}\xi_1^2}, \frac{x_2}{\xi_1 \xi_2^2} + \frac{x_4 x_6}{\xi_{14} \xi_2^2}} \\*
	& \qquad \times Y\paren{\frac{\xi_1}{\xi_{46}\xi_5^2},\frac{\xi_{25}}{\xi_{16}}, \frac{\xi_6}{\xi_{14}\xi_2^2},\xi_{124}} K_{12}(x_5) K_{13}(x_6) K_{14}(x_4) K_{34}(x_2) K_{24}(x_1) w_{121} \\
	& w_{211} X\paren{x_1,x_2\xi_1,\frac{x_1 x_2 + x_3\xi_2}{\xi_1},x_4\xi_{12},\frac{x_2 x_3 x_4+x_1 x_4\xi_2+x_5\xi_{34}}{\xi_1},0} = \\*
	& \qquad X\paren{-\frac{x_2 x_4}{\xi_2}-\frac{x_3 x_5 \xi_4}{\xi_{23}},\frac{x_3\xi_1}{\xi_2 \xi_{35}^2}-\frac{x_2 x_4 x_5\xi_1}{\xi_{234}\xi_5^2},\frac{x_5\xi_1}{\xi_{34}\xi_5^2},\frac{x_2}{\xi_{14}\xi_2^2},\frac{x_4}{\xi_{12}\xi_4^2},\frac{x_1}{\xi_1^2}+\frac{x_2 x_3}{\xi_2 \xi_1^2}+\frac{x_4 x_5\xi_3}{\xi_{24} \xi_1^2}} \\*
	& \qquad \times Y\paren{\frac{\xi_{45}}{\xi_{23}},\frac{\xi_1}{\xi_{34}\xi_5^2},\frac{\xi_{35}}{\xi_{24}\xi_1^2},\xi_{124}} K_{23}(x_5) K_{13}(x_3) K_{24}(x_4) K_{14}(x_2) K_{34}(x_1) w_{211} \\
	& w_{1111} X\paren{x_1, x_2 \xi_1, \frac{x_1 x_2+x_3 \xi_2}{\xi_1}, x_4 \xi_{12}, \frac{x_2 x_3 x_4+x_1 x_4 \xi_2+x_5 \xi_{34}}{\xi_1}, \frac{x_2 x_4 \xi_3+x_3 x_5 \xi_4+x_6 \xi_{45}}{\xi_{23}}} = \\*
	& \qquad X\paren{\frac{x_6 \xi_{23}}{\xi_{45} \xi_6^2}, \frac{x_5 \xi_1}{\xi_{34} \xi_5^2}, \frac{x_3 \xi_{15}+x_5 x_6 \xi_1}{\xi_{25} \xi_3^2}, \frac{x_4}{\xi_{12} \xi_4^2}, \frac{x_3 x_4 x_5+x_2 \xi_{34}+x_4 x_6 \xi_5}{\xi_{134}\xi_2^2}, \frac{x_1 \xi_{24}+x_2 x_3 \xi_4+x_4 x_5 \xi_3}{\xi_{24} \xi_1^2}} \\*
	& \qquad \times Y\paren{\frac{\xi_{23}}{\xi_{45}\xi_6^2}, \frac{\xi_{16}}{\xi_{25}\xi_3^2}, \frac{\xi_{35}}{\xi_{24}\xi_1^2}, \xi_{124}} K_{12}(x_6) K_{13}(x_5) K_{23}(x_3) K_{14}(x_4) K_{24}(x_2) K_{34}(x_1) w_{1111}
\end{align*}
\endgroup

\subsection{Bruhat Decompositions}
Note that we can compute the decomposition $g = xy\trans{u}$ from the Bruhat decomposition of $g w_l = xywx'$ by setting $\trans{u}=w_l x' w_l$ when $g w_l$ lies in the long-element Weyl cell $w=w_l$, which is true outside a set of measure zero.
We start by parameterizing $G_{w_l} = U(\R) Y \trans{U(\R)}$ with coordinates
\begin{align}
\label{eq:GwlCoordsG}
	g=& xy\trans{u},
\end{align}
$x,u \in U(\R), y \in Y$ using the coordinates \eqref{eq:GwlCoordsX} and similar for $u$ and for $y$, we use the coordinates \eqref{eq:GwlCoordsA}.

The Bruhat decompositions of $wx$ for $x \in \wbar{U}_w$ (using the original coordinates) are
\begingroup
\allowdisplaybreaks
\begin{align*}
	w_{31} x =& X\paren{-\frac{x_2}{x_1},0,-\frac{x_4}{x_2},0,0,\frac{1}{x_4}} A\paren{-1,x_1-x_2,x_4}\trans{X\paren{\frac{1}{x_1},\frac{1}{x_2},\frac{x_1}{x_2},\frac{1}{x_4},\frac{x_1}{x_4},\frac{x_2}{x_4}}}, \\
	w_{22} x =& X\paren{-\frac{x_4}{x_2},-\frac{x_5}{\zeta_1},\frac{x_3}{\zeta_1},0,\frac{1}{x_5},\frac{x_4}{x_5}} A\paren{1,-x_2,-\zeta_1,x_5} \trans{X\paren{-\frac{x_3}{x_2},-\frac{x_5}{\zeta_1},\frac{x_4}{\zeta_1},0,\frac{1}{x_5},\frac{x_3}{x_5}}}, \\
	w_{121} x =& X\paren{-\frac{x_1}{\zeta_3},-\frac{x_2}{\zeta_2},-\frac{x_2 x_5}{\zeta_2},\frac{1}{x_4},\frac{x_5}{x_4},\frac{x_6}{x_4}} A\paren{-1,\zeta_3,\zeta_2,x_4} \\*
	& \qquad \times \trans{X\paren{-\frac{x_5}{\zeta_3},-\frac{x_6}{\zeta_2},-\frac{x_1 x_6}{\zeta_2},\frac{1}{x_4},\frac{x_1}{x_4},\frac{x_2}{x_4}}}, \\
	w_{211} x =& X\paren{-\frac{\zeta_4}{\zeta_5},\frac{x_4}{\zeta_1},-\frac{x_2}{\zeta_1},0,\frac{1}{x_4},\frac{x_5}{x_4}} A\paren{-1,\zeta_5,\zeta_1,x_4} \trans{X\paren{-\frac{x_3}{\zeta_5},-\frac{x_5}{\zeta_1},\frac{\zeta_4}{\zeta_1},\frac{1}{x_4},\frac{x_1}{x_4},\frac{x_2}{x_4}}} \\
	w_{1111} x =& X\paren{-\frac{\zeta_5}{\zeta_6}, -\frac{x_2}{\zeta_1},\frac{\zeta_2}{\zeta_1},\frac{1}{x_4}, \frac{x_6}{x_4}, \frac{x_5}{x_4}} A\paren{1,-\zeta_6,\zeta_1,x_4} \trans{X\paren{-\frac{\zeta_7}{\zeta_6},-\frac{x_5}{\zeta_1},\frac{\zeta_4}{\zeta_1},\frac{1}{x_4},\frac{x_1}{x_4},\frac{x_2}{x_4}}},
\end{align*}
where
\begin{align*}
	\zeta_1 =& x_3 x_4-x_2 x_5, & \zeta_2 =& x_4-x_2 x_6, & \zeta_3 =& x_4-x_1 x_5-x_2 x_6, & \zeta_4 =& x_4-x_1 x_5,  \\
	\zeta_5 =& x_2 - x_1 x_3, & \zeta_6 =& x_4-x_1 x_5 - x_2 x_6 + x_1 x_3 x_6, & \zeta_7 =& x_5-x_3 x_6.
\end{align*}
\endgroup

\section{Differential Equations and Power Series}
\label{sect:DEPS}
In this section, we give constructions (proofs) on $GL(4)$ of the Bessel functions described in the Differential Equations and Power Series Conjecture, ignoring the degenerate cases $\mu_i - \mu_j \in \Z, i \ne j$.
As mentioned above, for brevity, we drop commas in subscripts where possible, i.e. writing $E_{11}$ rather than $E_{1,1}$


Parameterizing $G_{w_l}$ using the coordinates of \eqref{eq:GwlCoordsG} with \eqref{eq:GwlCoordsX} for $x$ and $u$, while using \eqref{eq:GwlCoordsY} for $y$ at $\abs{y_4}=1$ (i.e. in $GL(4,\R)/\R^+$), the differential operators $E_{ij}$ acting on functions of $G_{w_l}$ have the form
\begin{align}
	\paren{E_{ij}}_{i,j} =& \Matrix{
	y_1 \partial_{y_1} & y_1 \partial_{x_1} & y_1 y_2 \partial_{x_2} & y_1 y_2 y_3 \partial_{x_4}\\
	\partial_{u_1} & y_2 \partial_{y_2}-y_1 \partial_{y_1} & y_2 \partial_{x_3}+y_2 x_1 \partial_{x_2} & y_2 y_3 \partial_{x_5}+y_2 y_3 x_1 \partial_{x_4}\\
	\partial_{u_2} & \partial_{u_3}+u_1 \partial_{u_2} & y_3 \partial_{y_3}-y_2 \partial_{y_2} & y_3 \partial_{x_6}+y_3 x_2 \partial_{x_4}+y_3 x_3 \partial_{x_5}\\
	\partial_{u_4} & \partial_{u_5}+u_1 \partial_{u_4} & \partial_{u_6}+u_2 \partial_{u_4} + u_3 \partial_{u_5} & -y_3 \partial_{y_3}},
\end{align}
which may be computed either directly (write $xy\trans{u}\exp (tE) = xy \exp(t\trans{u}E u^{-T}) \trans{u}$ and then write $\trans{u}E u^{-T} = \sum_{i,j} f_{ij}(u) E_{ij}$; the $UY\trans{U}$ decomposition of $xy \exp(tE_{ij}) \trans{u}$ is trivial) or using the appropriate modification of \cite[App. B]{HyperKl}.

If we renormalize the Casimir operators slightly to define
\begin{align*}
	\Delta_1 =& \sum_{1 \le i \le 4} E_{ii}, \\
	-2\Delta_2 =& \sum_{1 \le i,j \le 4} E_{ij} \circ E_{ji}, \\
	3\Delta_3 =& \sum_{1 \le i,j,k \le 4} E_{ij} \circ E_{jk} \circ E_{ki}+4\Delta_2, \\
	-4\Delta_4 =& \sum_{1 \le i,j,k,l \le 4} E_{ij} \circ E_{jk} \circ E_{kl} \circ E_{li}-12\Delta_3+\Delta_2,
\end{align*}
then the eigenvalues of the power function $\Delta_i I_{\mu,\delta} = \lambda_i(\mu) I_{\mu,\delta}$ are
\begin{align*}
	\lambda_1(\mu) =& 0, \\
	\lambda_2(\mu) =& \frac{5}{2}-\frac{\mu_1^2+\mu_2^2+\mu_3^2+\mu_4^2}{2}, \\
	\lambda_3(\mu) =& \mu_1\mu_2\mu_3+\mu_1\mu_2\mu_4+\mu_1\mu_3\mu_4+\mu_2\mu_3\mu_4, \\
	\lambda_4(\mu) =& \frac{41}{16}-\frac{\mu_1^4+\mu_2^4+\mu_3^4+\mu_4^4}{4}.
\end{align*}

The long Weyl element was treated in \cite[Sect. 12.1]{GLnI}, and for the other Weyl elements we apply the algorithm discussed after the Differential Equations and Power Series Conjecture in \cite[Sect. 4]{GLnI}.

\subsection{Differential equations and power series for $w_{31}$}
We have
\[ W_{31}=\set{I, w_{(1\,2)},w_{(2\,3)}, w_{(3\,2\,1)},w_{(1\,2\,3)},w_{(1\,3)}}, \]
and $Y_{31}$ is characterized by $y_1=y_2=1$.
We consider a test function of the form
\[ f(xy\trans{u}) = \e{x_1+x_2+x_3+u_6} f(y_1, y_2, y_3, u_1, u_2, u_4), \]
and find that $J_{31}(y,\mu)$ is annihilated by the operator
\begin{equation}
\label{eq:w31DE}
\begin{aligned}
	& \bigl(4(\Delta_4-\lambda_4)+2(3+2E_{44})\circ(\Delta_3-\lambda_3) \\
	& \qquad \left.-(4 E_{44}\circ(3+E_{44})+2\Delta_2+2\lambda_2-1)\circ(\Delta_2-\lambda_2)\bigr) f \right|_{x=u=I} \\
	&= \bigl(4y_3^4\partial_{y_3}^4+4\lambda_2 y_3^2 \partial_{y_3}^2+4(\lambda_3-2\lambda_2)y_3\partial_{y_3} \\
	& \qquad \left.+\paren{4\paren{16\pi^4y_3-\lambda_4}-6\lambda_3+2\lambda_2^2-\lambda_2}\bigr) f \right|_{x=u=I}
\end{aligned}
\end{equation}
whose kernel has dimension at most $4 = \abs{W/W_{31}}$.
Comparing the powers of $y_3$ not offset by a derivative $\partial_{y_3}$, it becomes somewhat more convenient to work in the variable $t=16\pi^4 y_3$, and in that variable, the power series (Frobenius series) coefficients satisfy a recurrence relation
\[ a_{m-1}+m(\mu_1-\mu_4+m)(\mu_2-\mu_4+m)(\mu_3-\mu_4+m) a_{m}=0, \]
and hence for $y \in Y_{31}$, we have \eqref{eq:Jw31}.

\subsection{Differential equations and power series for $w_{22}$}
\label{sect:w22DEs}
We have
\[ W_{22}=\set{I, w_{(1\,2)},w_{(3\,4)}, w_{(1\,2)(3\,4)}}, \]
and $Y_{22}$ is characterized by $y_1=y_3=1$.
We consider a test function of the form
\[ f(xy\trans{u}) = \e{x_1+x_2+x_3+u_6-u_3 u_4} f(y_1, y_2, y_3, u_1, u_3), \]
and find that $J_{22}(y,\mu)$ is annihilated by the operator
\begin{equation}
\label{eq:w22DE}
\begin{aligned}
	& \left.\bigl(A^2\paren{\paren{\Delta_2+\lambda_2-2A^2-5}(\Delta_2-\lambda_2)-4(\Delta_4-\lambda_4)}+(\Delta_3+\lambda_3)(\Delta_3-\lambda_3)\bigr) f \right|_{x=u=I} \\
	&= \bigl(y_2^6\partial_{y_2}^6+3y_2^5\partial_{y_2}^5+2\lambda_2 y_2^4 \partial_{y_2}^4-4\lambda_2y_2^3\partial_{y_2}^3+(4\lambda_4-\lambda_2^2+19\lambda_2-64\pi^4 y_2) y_2^2\partial_{y_2}^2 \\
	& \qquad \left.+3\paren{-4\lambda_4+\lambda_2^2-15\lambda_2+32\pi^4 y_2}y_2\partial_{y_2}+\paren{16\lambda_4-\lambda_3^2-4\lambda_2^2+52\lambda_2-96\pi^4 y_2}\bigr)f \right|_{x=u=I}, \\
	&A := E_{11}+E_{22}-2,
\end{aligned}
\end{equation}
whose kernel has dimension at most $6 = \abs{W/W_{22}}$.
In the variable $t=16\pi^4 y_2$, the power series coefficients satisfy a recurrence relation
\begin{align*}
	&2(\mu_1+\mu_2+m)(-1+2\mu_1+2\mu_2+2m) a_{m-1} =\\
	&m(2\mu_1+2\mu_2+m)(\mu_1-\mu_3+m)(\mu_1-\mu_4+m)(\mu_2-\mu_3+m)(\mu_2-\mu_4+m) a_{m},
\end{align*}
and hence for $y \in Y_{22}$, we have \eqref{eq:Jw22}.

\subsection{Differential equations and power series for $w_{121}$}
We have
\[ W_{121}=\set{I, w_{(2\,3)}}, \]
and $Y_{121}$ is characterized by $y_2=1$.
We consider a test function of the form
\[ f(xy\trans{u}) = \e{x_1+x_2+x_3+u_4+u_5-u_2 u_1} f(y_1, y_2, y_3, u_2), \]
and find that $J_{121}(y,\mu)$ is annihilated by the operators
\begin{gather}
\label{eq:w121DE1}
\begin{aligned}
	& \left.\paren{(\Delta_3-\lambda_3) +(E_{22}+E_{33})(\Delta_2-\lambda_2)} f \right|_{x=u=I} \\
	&= \bigl(y_1^3\partial_{y_1}^3-y_3^3 \partial_{y_3}^3-y_1^2 y_3 \partial_{y_1}^2 \partial_{y_3}+y_1 y_3^2 \partial_{y_1} \partial_{y_3}^2 +\lambda_2 y_1 \partial_{y_1}-\lambda_2 y_3 \partial_{y_3} \\
	& \qquad \left.-\paren{\lambda_3+8\pi^3i(y_3+y_1)}\bigr)f \right|_{x=u=I},
\end{aligned} \\
\label{eq:w121DE2}
\begin{aligned}
	& \bigl(4(\Delta_4-\lambda_4)+2(2E_{11}-3))(\Delta_3-\lambda_3) \\
	& \qquad \left.-(4E_{11}^2-12E_{11}+2\Delta_2+2\lambda_2-1)(\Delta_2-\lambda_2)\bigr) f \right|_{x=u=I} \\
	&= \bigl(4y_1^4 \partial_{y_1}^4-32\pi^3 i y_1 y_3 \partial_{y_3}+4\lambda_2 y_1^2 \partial_{y_1}^2-4(\lambda_3+2\lambda_2+8\pi^3i y_1) y_1 \partial_{y_1} \\
	& \qquad \left.+\paren{-4\lambda_4+6\lambda_3+2\lambda_2^2-\lambda_2+64\pi^3i y_1}\bigr)f \right|_{x=u=I}.
\end{aligned}
\end{gather}
Switching to the variables $t_1=8\pi^3i y_1$, $t_2=8\pi^3i y_3$ simplifies the differential operators somewhat, and it can be shown that $J_{121}(y,\mu)$ satisfies a (somewhat complicated) twelfth-order differential equation in $t_1$ alone, whose solution then determines the function in $t_2$ (up to a constant), giving (a basis of) $12 = \abs{W/W_{121}}$ solutions.
In the $t$ variables, the power series coefficients satisfy the recurrence relations
\begin{align*}
	&a_{121,m_1,m_2-1}+a_{121,m_1-1,m_2} =\\
	&\bigl(m_1(\mu_1-\mu_2+m_1)(\mu_1-\mu_3+m_1)-m_2(\mu_2-\mu_4+m_2)(\mu_3-\mu_4+m_2) \\
	& \qquad +m_1m_2(2\mu_2+2\mu_3-m_1+m_2)\bigr)a_{121,m_1,m_2}, \\
	& (\mu_1-\mu_4+m_1+m_2) a_{121,m_1-1,m_2} = m_1(\mu_1-\mu_2+m_1)(\mu_1-\mu_3+m_1)(\mu_1-\mu_4+m_1) a_{121,m_1,m_2}.
\end{align*}
Substituting
\[ a_{121,m_1,m_2} = \frac{\Gamma(1+\mu_1-\mu_4+m_1+m_2)}{m_1! \Gamma(1+\mu_1-\mu_2+m_1)\Gamma(1+\mu_1-\mu_3+m_1)\Gamma(1+\mu_1-\mu_4+m_1)} b_{m_2} \]
into the first recurrence gives
\[ b_{m_2-1} = -m_2 (\mu_1-\mu_4+m_2)(\mu_2-\mu_4+m_2)(\mu_3-\mu_4+m_2) b_{m_2}, \]
and hence for $y \in Y_{121}$, we have \eqref{eq:Jw121}.

\subsection{Differential equations and power series for $w_{211}$}
\label{sect:w211DEPS}
The story for $w_{211}$ is a bit more complicated; we have
\[ W_{211}=\set{I, w_{(1\,2)}}, \]
and $Y_{211}$ is characterized by $y_1=1$.
We consider a test function of the form
\[ f(xy\trans{u}) = \e{x_1+x_2+x_3+u_3+u_4} f(y_1, y_2, y_3, u_1), \]
and find that $J_{211}(y,\mu)$ is annihilated by the operators
\begin{gather}
\label{eq:211DEs1}
\begin{aligned}
	& \left.\paren{(\Delta_3-\lambda_3) +(E_{11}+E_{22}-2)(\Delta_2-\lambda_2)} f \right|_{x=u=I} \\
	&= \bigl(4y_3^2\partial_{y_3}^2 -y_2^3\partial_{y_2}^3-2 y_2 y_3^2 \partial_{y_2}\partial_{y_3}^2+2 y_2^2 y_3 \partial_{y_2}^2\partial_{y_3}-2y_2 y_3 \partial_{y_2}\partial_{y_3}-(\lambda_2-8\pi^2 y_3) y_2 \partial_{y_2} \\
	& \qquad \left.-\paren{\lambda_3-2\lambda_2+8\pi^2(\pi i y_2+2y_3)}\bigr)f \right|_{x=u=I},
\end{aligned} \\
\label{eq:211DEs2}
\begin{aligned}
	& \bigl(-4(\Delta_4-\lambda_4)-2(2E_{44}+1)(\Delta_3-\lambda_3) \\
	& \qquad \left.+\paren{4E_{44}^2+4E_{44}-4E_{33}-16\pi^2 y_3-9+2\Delta_2+2\lambda_2}(\Delta_2-\lambda_2)\bigr) f \right|_{x=u=I} \\
	&= \bigl(-4 y_2^3 \partial_{y_2}^3-4 y_3^4 \partial_{y_3}^4+8y_2^2y_3\partial_{y_2}^2\partial_{y_3}-8y_2y_3^2\partial_{y_2}\partial_{y_3}^2+16\pi^2 y_2^2 y_3 \partial_{y_2}^2-8y_2y_3\partial_{y_2}\partial_{y_3} \\
	& \qquad +4(4-\lambda_2+8\pi^2y_3)y_3^2\partial_{y_3}^2 -4\lambda_2 y_2\partial_{y_2}-4(\lambda_3-2\lambda_2+8\pi^2y_3)y_3\partial_{y_3} \\
	& \qquad \left. +\paren{\lambda_2(9-2\lambda_2+16\pi^2 y_3)+2\lambda_3+4\lambda_4-32\pi^2(\pi i y_2+y_3+2\pi^2 y_3^2)}\bigr)f \right|_{x=u=I}.
\end{aligned}
\end{gather}
Switching to the variables $t_1=8\pi^3i y_2$, $t_2=4\pi^2 y_3$ simplifies the differential operators somewhat, and by the method of Frobenius, any series solution of the form
\[ f(t) = \abs{t_1}^{s_1} \abs{t_2}^{s_2} \sum_{m_1=0}^\infty \sum_{m_2=0}^\infty a_{211,m_1,m_2} t_1^{m_1} t_2^{m_2} \]
satisfies $s_1=2+\mu^w_1+\mu^w_2$, $s_2 = \frac{3}{2}-\mu^w_4$ for some $w \in W$, hence there are $\abs{W/W_{211}}=12$ Frobenius series solutions (i.e. solutions that are well-behaved near $y_3=y_2=0$).
Taking $s_1=2+\mu_1+\mu_2$, $s_2 = \frac{3}{2}-\mu_4$, the coefficients in general satisfy the recurrence relations
\begingroup
\allowdisplaybreaks
\begin{gather}
\label{eq:a211recur1}
\begin{aligned}
	&a_{m_1-1,m_2}-2 \paren{\mu_1+\mu_2+m_1} a_{m_1,m_2-1} = \\*
	&\paren{-m_1 \paren{\mu_1-\mu_3+m_1} \paren{\mu_2-\mu_3+m_1}-2 m_2 \paren{\mu_1+\mu_2+m_1} \paren{\mu_3-\mu_4+m_2-m_1}} a_{m_1,m_2},
\end{aligned}\\
\begin{aligned}
	& a_{m_1-1,m_2+1}-a_{m_1,m_2-1} = \\*
	&\Bigl(2 m_2 \paren{-2\mu_4+m_2+1}+m_1 \paren{3-2 \mu_3-2\mu_4+m_1}+\paren{\mu_1+\mu_3+1} \paren{\mu_2+\mu_3+1} \\*
	& \qquad \qquad +2(\mu_1+\mu_2)(2-\mu_4)\Bigr) a_{m_1,m_2} \\*
	& \qquad -\Bigl(\paren{m_2+1} \paren{\mu_1-\mu_4+m_2+1} \paren{\mu_2-\mu_4+m_2+1} \paren{\mu_3-\mu_4+m_2+1} \\*
	& \qquad \qquad +(\mu_1+\mu_2+m_1) \paren{2m_2(2+\mu_3-\mu_4+m_2-m_1)-m_1(2+2\mu_3-m_1)} \\*
	& \qquad \qquad + m_1 \paren{\mu_3(4-\mu_4)+2\mu_1+2\mu_2+\mu_1\mu_2}+2(\mu_1+\mu_2)(1+\mu_3-\mu_4) \Bigr) a_{m_1,m_2+1}
\end{aligned}\nonumber
\end{gather}
\endgroup
Finally, subtracting the first from $\frac{1}{4}$ times the second at $m_2\mapsto m_2-1$ gives
\begin{equation}
\label{eq:a211recur2}
\begin{aligned}
	&m_2 (\mu_1-\mu_4+m_2)(\mu_2-\mu_4+m_2)(\mu_3-\mu_4+m_2) a_{m_1,m_2}+a_{m_1,m_2-2} = \\
	& \bigr(2 \mu_1^2+\mu_1+\mu_2 \paren{5 \mu_1+2 \mu_2+1}+\mu_3 \paren{-2 \mu_3-3 \mu_4+2}+\paren{m_1+1} \paren{2 \mu_1+2 \mu_2+m_1} \\
	& \qquad +2 \paren{m_2-1} \paren{m_2-2 \mu_4}+1\bigl) a_{m_1,m_2-1}.
\end{aligned}
\end{equation}

Define
\begingroup
\allowdisplaybreaks
\begin{gather}
\label{eq:a211def}
\begin{aligned}
	a^*_{211,m_1,m_2}(\mu) :=& \frac{(-1)^{m_1}}{m_1! \, m_2! \, \Poch{1+\mu_1-\mu_4}{m_1} \Poch{1+\mu_2-\mu_4}{m_1} \Poch{1+\mu_3-\mu_4}{m_2}} \\*
	& \qquad \times \pFq32{ -m_1,1+2\mu_1+2\mu_2+m_1,\mu_4-\mu_3-m_2}{1+\mu_1-\mu_3, 1+\mu_2-\mu_3}{1},
\end{aligned}\\
\label{eq:b211def}
\begin{aligned}
	b^*_{211,m_1,m_2}(\mu) :=& \frac{(-1)^{m_1}}{m_1! \, m_2! \, \Poch{1+\mu_1-\mu_3}{m_1} \Poch{1+\mu_2-\mu_3}{m_1} \Poch{1+\mu_1-\mu_4}{m_2}} \\*
	& \qquad \times \pFq32{ \mu_3-\mu_1-m_1,1+\mu_2-\mu_4+m_1,-m_2}{1+\mu_2-\mu_4, 1+\mu_3-\mu_4}{1},
\end{aligned}\\
\label{eq:c211def}
\begin{aligned}
	c^*_{211,m_1,m_2}(\mu) :=& \frac{(-1)^{m_1}}{m_1! \, m_2! \, \Poch{1+\mu_1-\mu_3}{m_1} \Poch{1+\mu_2-\mu_3}{m_1} \Poch{1+\mu_3-\mu_4}{m_2}} \\*
	& \qquad \times \pFq32{-m_1,1+2\mu_1+2\mu_2+m_1,-m_2}{1+\mu_1-\mu_4, 1+\mu_2-\mu_4}{1}.
\end{aligned}
\end{gather}
\endgroup
Clearly $a^*_{211,0,0}(\mu) = b^*_{211,0,0}(\mu) = c^*_{211,0,0}(\mu) = 1$ by \eqref{eq:pFqFirstZero}, but we claim that
\begin{align}
\label{eq:F211Symmetries}
	a^*_{211,m_1,m_2}(\mu) =& b^*_{211,m_1,m_2}(\mu) = c^*_{211,m_1,m_2}(\mu),
\end{align}
for $m \in \N_0^2$, and these solve \eqref{eq:a211recur1} and \eqref{eq:a211recur2}.
Hence we have \eqref{eq:Jw211}, using
\[	a_{211,m_1,m_2}(\mu) = (8\pi^3 i)^{m_1} (4\pi^2)^{m_2} a^*_{211,m_1,m_2}(\mu)/\Lambda_{211}(\mu). \]
We note that form of \eqref{eq:a211def} was not derived by guessing or any method of solving recurrence relations, but rather by taking the coefficients from the Mellin-Barnes integral, as in \cref{sect:BesselMBProof}.
As is customary, we define $a^*_{211,m_1,m_2}(\mu) = 0$ whenever $m_1 < 0$ or $m_2 < 0$, but we can avoid checking base cases by applying $m_j! = \Gamma(m_j+1)$ (the Pochhammer symbols may similarly be expressed in terms of gamma functions), which naturally enforces this condition (treating $a^*_{211,m_1,m_2}(\mu)$ as a meromorphic function of $m_1,m_2$).

We now show $a^*_{211,m_1,m_2}(\mu)$ solves \eqref{eq:a211recur1} and \eqref{eq:a211recur2}.
One recurrence relation for $\pFqName32$ at $1$ is \cite[eq. (4.3)]{Bailey02}
\begin{equation}
\label{eq:3F2recur1}
\begin{aligned}
	0 =& (a_3-b_1+1)(a_3-b_2+1) \pFq32{a_1,a_2,a_3}{b_1,b_2}{1} \\
	& -(b_1b_2+(a_3+1)(3a_3-2b_1-2b_2+4)-(a_3-a_2+1)(a_3-a_1+1))\pFq32{a_1,a_2,a_3+1}{b_1,b_2}{1} \\
	& +(a_3+1)(a_3+a_2+a_1-b_1-b_2+2)\pFq32{a_1,a_2,a_3+2}{b_1,b_2}{1},
\end{aligned}
\end{equation}
and taking
\begin{equation}
\label{eq:a221to3F2subs}
\begin{aligned}
	a_1 =& -m_1, & a_2 =& 1+2\mu_1+2\mu_2+m_1, & a_3 =& \mu_4-\mu_3-m_2-1, \\
	b_1 =& 1+\mu_1-\mu_3, & b_2 =& 1+\mu_2-\mu_3,
\end{aligned}
\end{equation}
this implies \eqref{eq:a211recur2}.

We also have \cite[Sect. 48, eq 14]{Rainville}
\begin{equation}
\label{eq:3F2recur2}
\begin{aligned}
	0 =& a_1\, \pFq32{a_1+1,a_2,a_3}{b_1,b_2}{1}-a_2\, \pFq32{a_1,a_2+1,a_3}{b_1,b_2}{1}+(a_2-a_1) \pFq32{a_1,a_2,a_3}{b_1,b_2}{1}.
\end{aligned}
\end{equation}
Showing \eqref{eq:a211recur1} now becomes essentially a linear algebra problem.
Let $C_1$ be the right-hand side of \eqref{eq:3F2recur2} after substituting $a_2 \mapsto a_2-1$, let $C_2$ be the result of swapping $a_1$ and $a_3$ in $C_1$, let $C_3$ be the result of substituting $a_3 \mapsto a_3+1$ in $C_2$, and finally, let $C_4$ be the result of substituting $a_2 \mapsto a_2-1$ in the right-hand side of \eqref{eq:3F2recur1}.
Then
\[ A_1 C_1 + A_2 (A_3 C_2 - A_4 (A_5 C_3 - C_4)) = 0, \]
where
\begin{align*}
	A_1 =& (b_1-a_2) (b_2-a_2), \\
	A_2 =& \frac{1 + a_1 - a_2}{a_2-1}, \\
	A_3 =& (1+a_3) (4 + 3 a_3 - 2 b_1 - 2 b_2) - (2 - a_2 + a_3) (2 + a_2 + 2 a_3 - b_1 - b_2) + b_1 b_2, \\
	A_4 =& a_3, \\
	A_5 =& 1 + a_1 + a_2 + a_3 - b_1 - b_2.
\end{align*}
Writing this out gives the relation
\begin{align*}
	0=&-((a_2-1) A_1 + (1 + a_1 - a_2) A_3) \pFq32{a_1,a_2,a_3}{b_1,b_2}{1} + (1 + a_1 - a_2) a_3 A_5 \, \pFq32{a_1,a_2,a_3+1}{b_1,b_2}{1} \\
	& \qquad - a_1 A_1 \, \pFq32{a_1+1,a_2-1,a_3}{b_1,b_2}{1},
\end{align*}
and taking the values \eqref{eq:a221to3F2subs}, this implies \eqref{eq:a211recur1}, after some algebra.

That $b^*_{211,m_1,m_2}(\mu)$ and $c^*_{211,m_1,m_2}(\mu)$ solve \eqref{eq:a211recur1} and \eqref{eq:a211recur2} is seen by replacing \eqref{eq:a221to3F2subs} with the appropriate substitutions in \eqref{eq:3F2recur1} and \eqref{eq:3F2recur2}.

\subsection{Differential equations and power series for $w_{1111}$}
\label{sect:w1111DEPS}
In the case of the long Weyl element, we have $W_{1111}=\set{I}$, $Y_{1111}=Y$, the symmetries
\[ f(xy\trans{u}) = \e{x_1+x_3+x_6+u_1+u_3+u_6} f(y_1,y_2,y_3), \]
and $J_{1111}(y,\mu)$ is annihilated by the operators
\begingroup
\allowdisplaybreaks
\begin{gather}
\begin{aligned}
\label{eq:1111DEs1}
	& \left.(\Delta_2-\lambda_2)f\right|_{x=u=I} \\
	& = \bigl(4 \pi ^2 (y_1+y_2+y_3)-\lambda _2-y_1^2 \partial _{y_1}^2+y_1 y_2 \partial _{y_1} \partial _{y_2}-y_2^2 \partial _{y_2}^2 +y_2 y_3 \partial _{y_2} \partial _{y_3}-y_3^2 \partial _{y_3}^2\bigr)f|_{x=u=I}
\end{aligned}\\
\begin{aligned}
\label{eq:1111DEs2}
	& \left.(\Delta_3-\lambda_3)f\right|_{x=u=I} \\
	& = \bigl(-\lambda _3+y_1^2 y_2 \partial _{y_1}^2 \partial _{y_2}-2 y_1^2 \partial _{y_1}^2-y_1 y_2^2 \partial _{y_1} \partial _{y_2}^2+4 \pi ^2 y_2 (y_3-y_1) \partial _{y_2} +4 \pi ^2 y_1 y_2 \partial _{y_1} \\*
	& \qquad +y_1 y_2 \partial _{y_1} \partial _{y_2}+8 \pi ^2 (y_1-y_3)+y_2^2 y_3 \partial _{y_2}^2 \partial _{y_3}-y_2 y_3^2 \partial _{y_2} \partial _{y_3}^2-4 \pi ^2 y_2 y_3 \partial _{y_3} \\*
	& \qquad -y_2 y_3 \partial _{y_2} \partial _{y_3}+2 y_3^2 \partial _{y_3}^2\bigr)f|_{x=u=I}
\end{aligned}\\
\begin{aligned}
\label{eq:1111DEs3}
	& \left.(\Delta_4-\lambda_4)f\right|_{x=u=I} = \\
	& \Bigl(8 \pi^4 \paren{y_1^2+y_2^2+y_3^2+2y_1 y_2+2y_2 y_3}-\pi ^2 \paren{13 y_1+y_2 +13 y_3} -\lambda _4 \\*
	& \qquad +\tfrac{1}{2} y_1^4 \partial _{y_1}^4-y_1^3 y_2 \partial _{y_1}^3 \partial _{y_2}+2 y_1^3 \partial _{y_1}^3+\tfrac{3}{2} y_1^2 y_2^2 \partial _{y_1}^2 \partial _{y_2}^2 -y_1 y_2^3 \partial _{y_1} \partial _{y_2}^3-3 y_1^2 y_2 \partial _{y_1}^2 \partial _{y_2} \\*
	& \qquad -\tfrac{1}{4} y_1^2 \left(16 \pi ^2 (y_1+y_2)-17\right) \partial _{y_1}^2-\tfrac{1}{4} y_2^2 \left(16 \pi ^2 (y_1+y_2+y_3)-5\right) \partial _{y_2}^2 \\*
	& \qquad +4 \pi ^2 y_1 y_2 y_3 \partial _{y_1} \partial _{y_3} -4 \pi ^2 y_2 (-2 y_1+y_2-2 y_3) \partial _{y_2} -4 \pi ^2 y_1 (y_1+y_2) \partial _{y_1} \\*
	& \qquad +\tfrac{1}{4} y_1 y_2 \left(16 \pi ^2 (y_1+y_2)-5\right) \partial _{y_1} \partial _{y_2}+\tfrac{1}{2} y_2^4 \partial _{y_2}^4-y_2^3 y_3 \partial _{y_2}^3 \partial _{y_3}+2 y_2^3 \partial _{y_2}^3+\tfrac{3}{2} y_2^2 y_3^2 \partial _{y_2}^2 \partial _{y_3}^2 \\*
	& \qquad -y_2 y_3^3 \partial _{y_2} \partial _{y_3}^3-\tfrac{1}{4} y_3^2 \left(16 \pi ^2 (y_2+y_3)-17\right) \partial _{y_3}^2-3 y_2 y_3^2 \partial _{y_2} \partial _{y_3}^2-4 \pi ^2 y_3 (y_2+y_3) \partial _{y_3} \\*
	& \qquad +\tfrac{1}{4} y_2 y_3 \left(16 \pi ^2 (y_2+y_3)-5\right) \partial _{y_2} \partial _{y_3}+\frac{1}{2} y_3^4 \partial _{y_3}^4+2 y_3^3 \partial _{y_3}^3\Bigr)f|_{x=u=I}.
\end{aligned}
\end{gather}
\endgroup

In the notation of \cite{GLnI} (i.e., here $G$ and $R$ are not those of \cref{sect:GammaFunctions}), Hashizume's recurrence relation for the spherical Whittaker function \cite{Hashi} implies the coefficients of the long Weyl element power series on $GL(n)$ are characterized by
\begin{align*}
	a_m(\mu) =& (4 \pi^2)^{m_1+\ldots+m_{n-1}} G_{n,m}(\mu), \\
	G_{n,0}(\mu) =& 1, \\
	R_{n,k}(\mu) G_{n,k}(\mu) =& \sum_{j=1}^{n-1} G_{n,k-e_j}(\mu),
\end{align*}
where $G_{n,m}(\mu)=0$ when any $m_i < 0$ and
\[ R_{n,k}(\mu) = \frac{1}{2} \sum_{j=1}^n \paren{(k_j-k_{j-1})^2+2 k_j (\mu_j-\mu_{j+1})}, \]
using $m_j = 0$ for $j \notin \set{1,\ldots, n-3}$ and $k_j=0$ for $j \notin \set{1,\ldots,n-1}$.
Note: This corrects the formula for $R_{n,k}(\mu)$ given in \cite[Sect. 12.2.1]{GLnI}.

In particular, for $n=4$, we have the recurrence relation
\begin{align}
\label{eq:wlRecurRel}
	G_{4,m}(\mu) =& \frac{G_{4,m_1-1,m_2,m_3}(\mu)+G_{4,m_1,m_2-1,m_3}(\mu)+G_{4,m_1,m_2,m_3-1}(\mu)}{m_1^2+m_2^2+m_3^2-m_1 m_2-m_2 m_3+m_1(\mu_1-\mu_2)+m_2(\mu_2-\mu_3)+m_3(\mu_3-\mu_4)}.
\end{align}
Stade \cite{Stade04} solved this explicitly, and Ishi \cite[remark, p489]{Ishii} noted that $c^*_{1111,m}(\mu)$, below, is identical to Stade's answer by the identity \cite[Sect. 7.2, eq. (1)]{Bailey} for a Saalsch\"utzian $\pFqName43(1)$.

We will momentarily show that any of the following six expressions satisfy the above recurrence relation.
In particular, the expressions agree for all $m \in \N_0^3$.
\begingroup
\allowdisplaybreaks
\begin{align*}
	a^*_{1111,m}(\mu) =& \frac{\Poch{1 + 2 (\mu_1 + \mu_2)}{m_2 + m_3} \Poch{1 + \mu_1 - \mu_3}{ m_1 + m_2}}{m_1! \, m_2! \, m_3! \Poch{1 + \mu_1 - \mu_2}{m_1} \Poch{1 + 2 (\mu_1 + \mu_2)}{m_2} \Poch{1 + \mu_1 - \mu_3}{m_1} \Poch{1 + \mu_1 - \mu_3}{m_2}} \\*
	& \qquad \times \frac{1}{\Poch{1 + \mu_2 - \mu_3}{m_2} \Poch{1 + \mu_1 - \mu_4}{m_3} \Poch{1 + \mu_2 - \mu_4}{m_3}} \\*
	& \qquad \times \pFq43{-m_1 - \mu_1 + \mu_3, -m_2 - \mu_2 + \mu_3, -m_2 - \mu_1 + \mu_3, -m_3}{1 + \mu_3 - \mu_4, -m_1 - m_2 - \mu_1 + \mu_3, -m_2 - m_3 - 2 (\mu_1 + \mu_2)}{1}, \\
	b^*_{1111,m}(\mu) =& \frac{\Poch{1 + \mu_1 - \mu_4}{m_2 + m_3} \Poch{1 + \mu_1 - \mu_3}{m_1 + m_2}}{m_1! \, m_2! \, m_3! \Poch{1 + \mu_1 - \mu_2}{m_1} \Poch{1 + \mu_1 - \mu_3}{m_1} \Poch{1 + \mu_1 - \mu_3}{m_2} \Poch{1 + \mu_2 - \mu_3}{m_2}} \\*
	& \qquad \times \frac{1}{\Poch{1 + \mu_1 - \mu_4}{m_2} \Poch{1 + \mu_1 - \mu_4}{m_3} \Poch{1 + \mu_3 - \mu_4}{m_3}} \\*
	& \qquad \times \pFq43{-m_1 - \mu_1 + \mu_2, -m_2, -m_2 - \mu_1 + \mu_3, -m_3}{1 + \mu_2 - \mu_4, -m_1 - m_2 - \mu_1 + \mu_3, -m_2 - m_3 - \mu_1 + \mu_4}{1}, \\
	c^*_{1111,m}(\mu) =& \frac{\Poch{1 + \mu_1 - \mu_4}{m_2 + m_3} \Poch{1 + \mu_1 - \mu_4}{m_1 + m_2}}{m_1! \, m_2! \, m_3! \Poch{1 + \mu_1 - \mu_2}{m_1} \Poch{1 + \mu_1 - \mu_3}{m_2} \Poch{1 + \mu_1 - \mu_4}{m_1} \Poch{1 + \mu_1 - \mu_4}{m_2}} \\*
	& \qquad \times \frac{1}{\Poch{1 + \mu_1 - \mu_4}{m_3} \Poch{1 + \mu_2 - \mu_4}{m_2} \Poch{1 + \mu_3 - \mu_4}{m_3}} \\*
	& \qquad \times \pFq43{-m_1 - \mu_1 + \mu_2, -m_2, -m_2 - \mu_1 + \mu_4, -m_3 - \mu_3 + \mu_4}{1 + \mu_2 - \mu_3, -m_1 - m_2 - \mu_1 + \mu_4, -m_2 - m_3 - \mu_1 + \mu_4}{1}, \\
	d^*_{1111,m}(\mu) =& \frac{\Poch{1 + \mu_2 - \mu_4}{m_2 + m_3} \Poch{1 + \mu_1 - \mu_3}{m_1 + m_2}}{m_1! \, m_2! \, m_3! \Poch{1 + \mu_1 - \mu_2}{m_1} \Poch{1 + \mu_1 - \mu_3}{m_1} \Poch{1 + \mu_1 - \mu_3}{m_2} \Poch{1 + \mu_2 - \mu_3}{m_2}} \\*
	& \qquad \times \frac{1}{\Poch{1 + \mu_2 - \mu_4}{m_2} \Poch{1 + \mu_2 - \mu_4}{m_3} \Poch{1 + \mu_3 - \mu_4}{m_3}} \\*
	& \qquad \times \pFq43{-m_1, -m_2, -m_2 - \mu_2 + \mu_3, -m_3}{1 +\mu_1 - \mu_4, -m_1 - m_2 - \mu_1 + \mu_3, -m_2 - m_3 - \mu_2 + \mu_4}{1}, \\
	e^*_{1111,m}(\mu) =& \frac{\Poch{1 + \mu_2 - \mu_4}{m_2 + m_3} \Poch{1 + \mu_1 - \mu_4}{m_1 + m_2}}{m_1! \, m_2! \, m_3! \Poch{1 + \mu_1 - \mu_2}{m_1} \Poch{1 + \mu_2 - \mu_3}{m_2} \Poch{1 + \mu_1 - \mu_4}{m_1} \Poch{1 + \mu_1 - \mu_4}{m_2}} \\*
	& \qquad \times \frac{1}{\Poch{1 + \mu_2 - \mu_4}{m_2} \Poch{1 + \mu_2 - \mu_4}{m_3} \Poch{1 + \mu_3 - \mu_4}{m_3}} \\*
	& \qquad \times \pFq43{-m_1, -m_2, -m_2 - \mu_2 + \mu_4, -m_3 - \mu_3 +\mu_4}{1 + \mu_1 - \mu_3, -m_1 - m_2 - \mu_1 + \mu_4, -m_2 - m_3 - \mu_2 + \mu_4}{1}, \\
	f^*_{1111,m}(\mu) =& \frac{\Poch{1 + \mu_2 - \mu_4}{m_2 + m_3} \Poch{1 + 2 (\mu_1 + \mu_2)}{m_1 + m_2}}{m_1! \, m_2! \, m_3! \Poch{1 + 2 (\mu_1 + \mu_2)}{m_2} \Poch{1 + \mu_1 - \mu_3}{m_1} \Poch{1 + \mu_2 - \mu_3}{m_2} \Poch{1 + \mu_1 - \mu_4}{m_1}} \\*
	& \qquad \times \frac{1}{\Poch{1 + \mu_2 - \mu_4}{m_2} \Poch{1 + \mu_2 - \mu_4}{m_3} \Poch{1 + \mu_3 - \mu_4}{m_3}} \\*
	& \qquad \times \pFq43{-m_1, -m_2 - \mu_2 + \mu_3, -m_2 - \mu_2 + \mu_4, -m_3 - \mu_2 + \mu_4}{1 + \mu_1 - \mu_2, -m_1 - m_2 - 2 (\mu_1 + \mu_2), -m_2 - m_3 - \mu_2 + \mu_4}{1}.
\end{align*}
\endgroup
As usual, we define $z_{1111,m}(\mu) = (4 \pi^2)^{m_1+m_2+m_3} z^*_{1111,m}(\mu)/\Lambda_{w_l}(\mu)$ for $z=a,b,c,d,e,f$.
Hence we have \eqref{eq:Jw1111}.

The recurrence relation \eqref{eq:wlRecurRel} for $z^*_{1111,m}(\mu)$ at $z=a,b,d,e,f$ can be reduced (through substitutions) to
\begin{equation}
\label{eq:wlRecurRel2}
\begin{aligned}
	0 =& b_1 b_2 (a_1 a_3 + a_1 a_4 - a_1 b_1 + a_2 a_3 + a_2 a_4 - a_2 b_2 - a_3 a_4) \pFq43{a_1, a_2, a_3, a_4}{b_1, b_2, b_3}{1} \\
	&+ a_3 a_4 (a_1 - b_1) (a_2 - b_2) \pFq43{a_1, a_2, 1 + a_3, 1 + a_4}{1 + b_1, 1 + b_2, b_3}{1} \\
	&- a_2 b_1 (a_3 - b_2) (b_2 - a_4) \pFq43{a_1, 1 + a_2, a_3, a_4}{b_1,1 + b_2, b_3}{1} \\
	&- a_1 b_2 (a_3 - b_1) (b_1 - a_4) \pFq43{1 + a_1, a_2, a_3, a_4}{1 + b_1, b_2, b_3}{1},
\end{aligned}
\end{equation}
with $a_1 \in -\N_0$, while those at $z=b,c,d,e$ can be reduced to \eqref{eq:wlRecurRel2} with $a_4 \in -\N_0$, where in both cases, we take $1+a_1+a_2+a_3+a_4=b_1+b_2+b_3$, i.e. the hypergeometric functions are ``Saalsch\"utzian'' or ``balanced''.
The arguments of Stade and Ishii (as mentioned above) imply \eqref{eq:wlRecurRel2} with $a_4 \in -\N_0$ upon noting that only $k_2$ is treated as an integer in Stade's proof and we may absorb a fixed value of $k_2$ into, say, $a_1\mapsto a_1-k_2$.
Unfortunately, the overlap at $z=b,d,e$ does not imply \eqref{eq:wlRecurRel2} with $a_1 \in -\N_0$ except when $a_4 \in -\N_0$ also, which is not sufficient for our purposes.
Furthermore, the relevant formulas are not readily available in the literature, so we will prove \eqref{eq:wlRecurRel2} directly.

Such formulas follow in general from dimension arguments and the effect of the operators $\frac{d}{dz}$ and $z\frac{d}{dz}$ (see \cite{Bailey02,Rainville}), which we apply in two stages.
First, for general (not necessarily Saalsch\"utzian, not necessarily terminating and not necessarily at $z=1$) $\pFqName43$, setting
\[ f(z) = \pFq43{a_1, a_2, a_3, a_4}{b_1+1, b_2+1, b_3}{z}, \qquad T_a := 1+\frac{1}{a} z\frac{d}{dz}, \]
we see that
\[ T_{a_1}(f) = \pFq43{a_1+1, a_2, a_3, a_4}{b_1+1, b_2+1, b_3}{z}, \qquad T_{b_1}(f) = \pFq43{a_1, a_2, a_3, a_4}{b_1, b_2+1, b_3}{z}, \]
etc., since
\[ \Poch{a+1}{k}=\paren{1+\frac{1}{a}k}\Poch{a}{k}. \]
Then
\[ T_{b_2} T_{b_1} f, \qquad T_{a_3} T_{a_4} f, \qquad T_{a_2} T_{b_1} f, \qquad T_{a_1} T_{b_2} f \]
all lie in the span of $f,f',f''$ (with coefficients in $\R(a,b,z)$), so there must be a linear dependence relation among them.
That linear dependence relation is
\begin{equation}
\label{eq:4F3genrel}
\begin{aligned}
	0 =& b_1 b_2 (a_3 a_4 (b_2-a_2) + b_1 (a_3 (a_2 - a_4) + a_2(a_4 - b_2)) \\
 	& \qquad + 
 a_1 (a_3 a_4 + (a_2 - a_3 - a_4) b_2 + b_1 (-a_2 + b_2))) \pFq43{a_1, a_2, a_3, a_4}{b_1, b_2, b_3}{z} \\
	&+ a_3 a_4 (a_1 - b_1) (a_2 - b_2)(b_1-b_2) \pFq43{a_1, a_2, 1 + a_3, 1 + a_4}{1 + b_1, 1 + b_2, b_3}{z} \\
	&- a_2 b_1 (b_1-a_1)(a_3 - b_2) (b_2 - a_4) \pFq43{a_1, 1 + a_2, a_3, a_4}{b_1,1 + b_2, b_3}{z} \\
	&- a_1 b_2 (a_3 - b_1) (b_1 - a_4) (a_2-b_2) \pFq43{1 + a_1, a_2, a_3, a_4}{1 + b_1, b_2, b_3}{z}.
\end{aligned}
\end{equation}
Now multiplying \eqref{eq:wlRecurRel2} by $(b_1-b_2)$ and subtracting \eqref{eq:4F3genrel} at $z=1$ gives
\begin{equation}
\label{eq:wlRecurRel3}
\begin{aligned}
	0 =& b_1 b_2 (a_1 (a_3 - b_1) (a_4 - b_1) - a_2 (a_3 - b_2) (a_4 - b_2) - a_1 a_2 (b_1 - b_2)) \pFq43{a_1, a_2, a_3, a_4}{b_1, b_2, b_3}{1} \\
	&+ a_2 b_1 (a_1-b_2) (a_3 - b_2) (a_4-b_2) \pFq43{a_1, 1 + a_2, a_3, a_4}{b_1,1 + b_2, b_3}{1} \\
	&+ a_1 b_2 (a_2-b_1) (a_3 - b_1) (a_4-b_1) \pFq43{1 + a_1, a_2, a_3, a_4}{1 + b_1, b_2, b_3}{1}.
\end{aligned}
\end{equation}

Finally, we use the contiguous relations for Saalsch\"utzian $\pFqName43(1)$, and these are found in \cite[Ch. IV]{WilsonThesis}.
Unfortunately, \cite[eq. (4.22)]{WilsonThesis} has a minor typographical error in that the factor $(f-b)$ should be $(f-a)$; correcting this gives \eqref{eq:wlRecurRel3}.
The correctness of the formula may be checked by noting that the three (terminating, Saalsch\"utzian) $\pFqName43(1)$ belong to a two-dimensional vector space and the vectors of values at $a_4=-1,-2,-3$ are generically linearly independent.
(So it suffices to check the formula at $a_4=-1,-2,-3$, which is simple to do.)

\section{Integral Representations}
\label{sect:IntRepns}
We consider the (conditionally convergent) integral
\begin{align}
\label{eq:IwDef}
	\mathcal{I}_w = \mathcal{I}_w(y,\mu,\delta) := \int_{\wbar{U}_w(\R)} I^W_{\mu,\delta}(ywu) \wbar{\psi_I(u)} du,
\end{align}
as in \cite[Sect. 5.5.1]{GLnI}.
We intend to show that for each relevant $w$, $K_w(y,\mu,\delta) = \mathcal{I}_w(y,\mu,\delta)$; in particular, the function $C(\mu,\delta)$ of \cite[Sect. 5.5.1]{GLnI} is simply one.
The general method is to compute a Mellin-Barnes integral for $\mathcal{I}_w$ and show first that it lies in the span of the Frobenius series and second that it has the correct asymptotics as $y \to 0$.
We prove this for $\Lambda=0$ and leave the higher-weight cases to the Analytic Continuation Conjecture.
(Which we have proven, except in the case of $w=w_{22}$.)

Once the equality $K_w=\mathcal{I}_w$ is established via the Mellin-Barnes integrals, we also consider Stade-type multiple-Bessel integrals, the inverse Mellin transform and the simultaneous Fourier-inverse Mellin transform.
We are somewhat careless with the details of convergence, but the precise, correct path would be
\begin{enumerate}
\item For some nice (e.g. Schwartz-class and holomorphic on a tube domain containing $\Re(\mu)=0$) test function $f$, define
\begin{align}
\label{eq:InvIStar}
	F(g) = F_{\mu,\delta}(g) := \int_{\Re(\mu)=0} f(\mu) I^W_{\mu,\delta}(g) \dspecmu,
\end{align}
and consider instead
\begin{align*}
	\mathcal{H}_w = \mathcal{H}_w(y,\mu,\delta) := \int_{\wbar{U}_w(\R)} F_{\mu,\delta}(ywu) \wbar{\psi_I(u)} du.
\end{align*}

\item Argue that (the $u$ integral in) $\mathcal{H}_w$ converges nicely enough and apply the Mellin expansion \eqref{eq:PsiThetaInvMellin} to each term of the complex exponentials, as well as any desired substitutions.
By ``converges nicely enough'', we mean that the oscillatory integral converges uniformly in $\mu$ at $\theta=\frac{\pi}{2}$, while taking $\theta < \frac{\pi}{2}$ improves convergence without affecting any applications of integration by parts; this is most easily seen from the inverse Mellin transform, i.e. the representation of the Bessel function as a Mellin transform.
More precisely, if we use a Schwartz-class $\alpha:\R^m \to \R$ with $\alpha(0)=1$ to define the smoothed Riemann integral
\[ \int_{\R^m}^\alpha \cdots dx := \lim_{R \to \infty} \int_{\R^m} \alpha\paren{\frac{x}{R}} \cdots dx, \]
then the $R$ limit converges uniformly in $\theta \in (0,\frac{\pi}{2}]$ and polynomially in $\norm{\mu}$.

\item Apply Fubini-Tonelli to the now absolutely-convergent $u$-$\mu$ integrals and compute the resulting Mellin transforms, giving beta functions, etc.

\item Take the limit in $\theta$ from \eqref{eq:PsiThetaInvMellin} by dominated convergence.

\item Argue that the Mellin-Barnes integral inside the $\mu$ integral is $K_w$ by its asymptotics and differential equations.
That is,
\begin{align}
\label{eq:HwKw}
	\mathcal{H}_w(y,\mu,\delta) = \int_{\Re(\mu)=0} f(\mu) K_w(y,\mu,\delta) \dspecmu.
\end{align}
Note: We are not applying an approximation to the identity here, but arguing directly that the resulting Mellin-Barnes integral is the Bessel function, which implies the above.
The reasons for the somewhat mysterious changes of coordinates can be deduced directly from the integral, but are best thought of in terms of a general computation as in \cite[Sect. 8]{GLnI}.
We postpone this step until \cref{sect:BesselMBProof}.

\item Similarly, the other integral representations are derived by manipulating $\mathcal{H}_w$, which we now know satisfies \eqref{eq:HwKw}.
Note: Here we are applying an approximation to the identity in \eqref{eq:HwKw} to argue that the resulting integral representations also give the Bessel function.
\end{enumerate}
Furthermore, as the details are somewhat repetitive, we restrict to simply stating the steps involved and the final integral representation.

To start with, if we write $wu=xy^*\trans{x'}$, we have the representation
\begin{align}
\label{eq:IwRepn1}
	\mathcal{I}_w = I_{\mu,\delta}(y) \int_{\wbar{U}_w(\R)} I_{\mu,\delta}(y^*) \psi_y(x) \wbar{\psi_I(u)} du,
\end{align}
but if we initially conjugate $(w^{-1} y w) u \mapsto u (w^{-1} y w)$, we can also write
\begin{align}
\label{eq:IwRepn2}
	\mathcal{I}_w = I_{-\mu,\delta}(y^\iota) \int_{\wbar{U}_w(\R)} I_{\mu,\delta}(y^*) \psi_I(x) \wbar{\psi_{y^\iota}(u)} du,
\end{align}
using the facts $w^{-1} y w = (y^\iota)^{-1}$ (for a relevant Weyl element $w$ and $y \in Y_w$, since the conjugation reverses the order of the blocks in \cite[eq. (8)]{GLnI}), $I_{0,0}(y^\iota)=I_{0,0}(y)$ (since $\what{\rho}_{n-i}=\what{\rho}_i$ in \cite[Sect. 6.2]{GLnI}), and the Jacobian for the change of variables $u \mapsto yuy^{-1}$ is $p_{\rho-\rho^w}(y)$ \cite[Sect. 6.1]{GLnI}.
We assume throughout that $\Re(\mu)=0$; in particular, we have
\[ I_{-\mu,\delta}(y^\iota) = \chi^{\delta_4}_{\frac{3}{2}-\mu_1-\mu_2-\mu_3}(y_1) \chi^{\delta_3+\delta_4}_{2-\mu_1-\mu_2}(y_2) \chi^{\delta_2+\delta_3+\delta_4}_{\frac{3}{2}-\mu_1}(y_3). \]

Note: In the construction of the Kuznetsov formula for the $\Lambda=0$ case, it might be useful to consider localizing by taking \eqref{eq:InvIStar} as the kernel of the Poincar\'e series and arguing by Mellin inversion that
\[ \sum_{v \in \pi_{0,\wtilde{\mu},\delta}} \innerprod{F, v} = f(\wtilde{\mu}) \]
on the spectral side.
This would avoid the Iwasawa decomposition in the Interchange of Integrals, but the author knows of no reason why the Langlands spectral expansion for such a Poincar\'e series should converge.

Throughout this section, we set
\begin{align*}
	\Delta=&\delta_1+\delta_2+\delta_3+\delta_4.
\end{align*}

\subsection{Integral representations for $w_{31}$.}
After substituting $(u_1,u_2,u_4) \mapsto (x_1, x_2, x_3)$, we have
\begin{align}
\label{eq:Iw31}
	\mathcal{I}_{31} =& (-1)^{\delta_1+\delta_2+\delta_3} I_{-\mu,\delta}(y^\iota) \int_{\R^3} \wtilde{I}_{\mu,\delta}\paren{x_1, x_2, x_3} \e{-y_3 x_1-\frac{x_2}{x_1}-\frac{x_3}{x_2}+\frac{1}{x_3}} dx.
\end{align}
It follows that the inverse Mellin transform (recall \eqref{eq:wcheckKdef}) is \eqref{eq:K31IM} and the Fourier-inverse Mellin transform is singular.

If instead we evaluated the $x_1$ and $x_3$ integrals in \eqref{eq:Iw31} using \eqref{eq:mcZInt}, we have the Stade-type multiple Bessel integral \eqref{eq:K31Stade}.

In \eqref{eq:mcZInt}, if one were to Mellin expand the complex exponentials using \eqref{eq:PsiThetaInvMellin} and evaluate the $x$ integral using \eqref{eq:BetaEval}, one would necessarily arrive at the same expression as Mellin expanding $\mathcal{Z}_\mu^\delta(ab)$ using \eqref{eq:ZMellin}.
So in developing the Mellin-Barnes integrals, we apply the latter approach to the Stade-type integral, as it involves less writing.
Mellin expanding the $GL(2)$ Bessel functions with \eqref{eq:ZMellin}, applying Mellin inversion in $u$ and substituting $s \mapsto s-\frac{\mu_1+\mu_2}{2}$, $\ell \mapsto \ell+\delta_3+\delta_4$ gives \eqref{eq:Kw31MB}.

\subsection{Integral representations for $w_{22}$.}
After substituting $(u_2,u_3,u_4, u_5) \mapsto (x_1, x_2, x_3, x_4)$, we have
\begin{align}
\label{eq:Iw22}
	\mathcal{I}_{22} =& (-1)^{\delta_1+\delta_3} I_{-\mu,\delta}(y^\iota) \int_{\R^4} \wtilde{I}_{\mu,\delta}\paren{x_1,x_2 x_3-x_1 x_4,x_4} \e{-y_2 x_2-\frac{x_3}{x_1}+\frac{x_2}{x_2 x_3-x_1 x_4}+\frac{x_3}{x_4}} dx.
\end{align}
Substituting $(x_1,x_2,x_2 x_3-x_1 x_4, x_4) \mapsto (z_1,x_2,z_2, z_3)$ gives the inverse Mellin transform as
\begin{align*}
	\wcheck{K}_{22}(y,z) =& (-1)^{\delta_1+\delta_3} \int_{\R} \e{-y_2 x_2+\frac{x_2}{z_2}+\paren{\frac{z_2+z_1 z_3}{x_2}}\paren{\frac{1}{z_3}-\frac{1}{z_1}}} \frac{dx_2}{\abs{x_2}},
\end{align*}
and \eqref{eq:K22IM} and \eqref{eq:K22FIM} (recall \eqref{eq:wtildeKdef}) follow.

If instead we substituted $x_1 \mapsto x_1 x_2 x_3/x_4$ and evaluated the $x_2$ and $x_3$ integrals in \eqref{eq:Iw22}, we have the Stade-type integral \eqref{eq:K22Stade}.

Mellin expanding the $GL(2)$ Bessel functions with \eqref{eq:ZMellin}, applying Mellin inversion in $u_2$, evaluating the $u_1$ integral using \eqref{eq:BetaEval} and substituting $s \mapsto s-\frac{\mu_2-\mu_4}{2}$, $\ell \mapsto \ell+\delta_4$ using \eqref{eq:GdShift} gives \eqref{eq:Kw22MB}.

\subsection{Integral representations for $w_{121}$.}
After substituting \\ $(u_1,u_2,u_4,u_5, u_6) \mapsto (x_1, x_2, x_3, x_4, x_5)$, we have
\begin{equation}
\label{eq:Iw121}
\begin{aligned}
	\mathcal{I}_{121} =& (-1)^{\delta_1} I_{-\mu,\delta}(y^\iota) \int_{\R^5} \wtilde{I}_{\mu,\delta}\paren{x_3-x_2 x_5-x_1 x_4,x_3-x_2x_5,x_3} \\
	& \qquad \e{-y_3 x_1-y_1 x_5-\frac{x_1}{x_3-x_2 x_5-x_1 x_4}-\frac{x_2 x_4}{x_3-x_2 x_5}+\frac{x_5}{x_3}} dx.
\end{aligned}
\end{equation}
Substituting $(x_1,x_2,x_3, x_4, x_5) \mapsto (\frac{z_2-z_1}{x_4}, \frac{z_3-z_2}{x_5}, z_3, x_4, x_5)$ gives the inverse Mellin transform as
\begin{align*}
	\wcheck{K}_{121}(y,z) =& (-1)^{\delta_1} \int_{\R^2} \e{-\frac{z_2-z_1}{x_4}\paren{y_3+\frac{1}{z_1}}-y_1 x_5-\frac{(z_3-z_2) x_4}{z_2 x_5}+\frac{x_5}{z_3}} \frac{dx_4 \, dx_5}{\abs{x_4 x_5}},
\end{align*}
and hence \eqref{eq:K121IM}.
After substituting $x_4 \mapsto (z_1-z_2)/x_4$, we have Fourier-inverse Mellin transform \eqref{eq:K121FIM}.

If instead we substituted $(x_1,x_3,x_4) \mapsto (-u_1 x_2, u_2 u_1 x_2 x_5, -x_5(u_2+u_3-1/u_1)x_5)$ and evaluated the $x_2$ and $x_5$ integrals in \eqref{eq:Iw121}, we have the partial Stade-type integral \eqref{eq:K121Stade}.
Note: The $u_1$ coordinate has been isolated from the $GL(2)$ Bessel functions, but the integral is a new hypergeometric function, which certainly may be expressed in terms of integrals of $GL(2)$ Bessel functions, but we don't pursue that avenue here.

Mellin expanding the $GL(2)$ Bessel functions with \eqref{eq:ZMellin} and the two terms of the complex exponential (separately) using \eqref{eq:PsiThetaInvMellin} (and skipping ahead to $\theta=\frac{\pi}{2}$), applying Mellin inversion in $u_2,u_3$, evaluating the $u_1$ integral using \eqref{eq:BetaEval} and substituting $(s_1,s_2) \mapsto (s_1-\frac{\mu_1-\mu_3}{2},s_2-\frac{\mu_1-\mu_4}{2})$, $\ell_2 \mapsto \ell_2+\delta_2+\delta_3-2(\ell_2\delta_2+\ell_2\delta_3+\delta_2\delta_3) \pmod{4}$ (taking care here since the Mellin transform of the complex exponential requires $\delta \in \set{0,1}$) gives \eqref{eq:Kw121MB}.

\subsection{Integral representations for $w_{211}$.}
After substituting \\ $(u_1, u_2,u_3,u_4, u_5) \mapsto (x_1, x_2, x_3, x_4, x_5)$, we have
\begin{equation}
\label{eq:Iw211}
\begin{aligned}
	\mathcal{I}_{211} =& (-1)^{\delta_1} I_{-\mu,\delta}(y^\iota) \int_{\R^5} \wtilde{I}_{\mu,\delta}\paren{x_2-x_1 x_3, x_3 x_4-x_2 x_5, x_4} \\
	& \qquad \e{-y_3 x_1-y_2 x_3-\frac{x_4-x_1 x_5}{x_2-x_1 x_3}-\frac{x_2}{x_3 x_4-x_2 x_5}+\frac{x_5}{x_4}} dx.
\end{aligned}
\end{equation}
Substituting $x_1 \mapsto (x_2-z_1)/x_3$ and then $x_2 \mapsto \frac{x_3 x_4-z_2}{x_5}$ and $x_4 \mapsto z_3$ gives the inverse Mellin transform as
\begin{align*}
	& \wcheck{K}_{211}(y,z) \\
	&= (-1)^{\delta_1} \int_{\R^2} \e{-y_3\paren{-\frac{z_1}{x_3}-\frac{z_2}{x_3 x_5}+\frac{z_3}{x_5}}-y_2 x_3-\frac{z_1 x_5+z_2}{z_1 x_3}-\frac{z_3 x_3-z_2}{z_2 x_5}+\frac{x_5}{z_3}} \frac{dx_3 \, x_5}{\abs{x_3 x_5}},
\end{align*}
from which follows \eqref{eq:K211IM}.
After substituting $(x_3,x_5) \mapsto \paren{-t_1,-\frac{z_2+z_3 t_1}{z_1+t_1 t_2}}$, we have the Fourier-inverse Mellin transform \eqref{eq:K211FIM}.

If instead we substituted $(x_2,x_4,x_5) \mapsto ((u_1(1-u_2)+u_2)x_1 x_3,(1-u_1)u_2 u_3 x_1,(1-u_1)u_3)$ and then evaluated the $x_1$ and $x_3$ integrals in \eqref{eq:Iw211}, we have the partial Stade-type integral \eqref{eq:K211Stade}.
Note: Again, the $u_1$ coordinate has been isolated from the Bessel functions, but the integral is a new hypergeometric function.

Mellin expanding the $GL(2)$ Bessel functions with \eqref{eq:ZMellin} and the two terms of the complex exponential (separately) using \eqref{eq:PsiThetaInvMellin}, applying Mellin inversion in $u_3$, evaluating the $u_1,u_2$ integrals using \eqref{eq:BetaEval} and substituting $(s_1,s_3) \mapsto (s_1-\frac{\mu_2-\mu_4}{2},s_3-\frac{\mu_1+\mu_4}{2})$, $(\ell_1,\ell_3) \mapsto (\ell_1+\delta_1+\delta_2+\delta_3,\ell_3+\delta_1+\delta_4)$ then for the spare integral substituting $s_2 \mapsto -s_2+s_1-\mu_4$, $\ell_2 \mapsto \ell_2+\ell_1+\delta_4$ gives \eqref{eq:Kw211MB}.
Note: The $s_2$ integral in \eqref{eq:Kw211MB} should be taken first as it converges conditionally, though it can be bent to achieve absolute convergence, as described above.

\subsection{Integral representations for $w_{1111}$.}
After renaming $u \mapsto x$, we have
\begin{align*}
	\mathcal{I}_{1111}(y,\mu,\delta) =& (-1)^{\delta_1+\delta_2} I_{-\mu,\delta}(y^\iota) \int_{\R^6} \wtilde{I}_{\mu,\delta}(x_4-x_2 x_6-x_1 x_5+x_1 x_3 x_6, x_3 x_4 - x_2 x_5, x_4) \\
	& \e{-y_3 x_1-y_2 x_3-y_1 x_6-\frac{x_2 - x_1 x_3}{x_4-x_2 x_6-x_1 x_5+x_1 x_3 x_6}+\frac{x_4-x_2 x_6}{x_3 x_4-x_2 x_5}+\frac{x_5}{x_4}} dx.
\end{align*}
Substituting
\[ (x_1,x_2,x_5) \mapsto \paren{\frac{x_3 x_5 z_1-z_2+x_3 (1-x_5) z_3}{x_3^2(1-x_5) x_5 x_6}, -\frac{z_2-x_3 z_3}{x_3 x_5 x_6}, x_3 x_5 x_6} \]
gives the inverse Mellin transform as
\begin{align*}
	\wcheck{K}_{1111}(y,z) =& (-1)^{\delta_1+\delta_2} \int_{\R^3} e\biggl(-y_3 \frac{x_3 x_5 z_1-z_2+x_3 (1-x_5) z_3}{x_3^2(1-x_5) x_5 x_6}-y_2 x_3-y_1 x_6 \\
	& \qquad+\frac{x_3 z_1 - z_2}{x_3 x_6 (1-x_5) z_1}+\frac{z_2-x_3(1-x_5)z_3}{x_3 x_5 z_2}+\frac{x_3 x_5 x_6}{z_3}\biggr) \frac{dx_3 \, dx_5 \, dx_6}{\abs{x_3 x_5 x_6 (1-x_5)}},
\end{align*}
from which we get \eqref{eq:K1111IM}.
If we instead substitute $x_2 \mapsto -\frac{x_2-x_3 x_4}{x_5}$ and then
\[ x_5 \mapsto \frac{x_4-x_5+x_1 x_3 x_6\pm\sqrt{(x_1 x_3 x_6+x_4-x_5)^2+4x_1 x_6(x_2-x_3 x_4)}}{2x_1}, \]
the Fourier-inverse Mellin transform becomes \eqref{eq:K1111FIM}.

For the Stade-type and Mellin-Barnes integrals, we substitute
\[ (x_3,x_5,x_6) \mapsto \paren{x_3+\frac{x_2}{x_1},x_5+\frac{x_4}{x_1}+\frac{x_3 x_4}{x_2},x_6+\frac{x_4}{x_2}+\frac{x_5}{x_3}}, \]
then $(x_2,x_5) \mapsto (x_2/y_2,x_5/y_1)$ so that
\begin{equation}
\label{ex:LastIw1111}
\begin{aligned}
	& \mathcal{I}_{1111}(y,\mu,\delta) = \\
	& (-1)^{\delta_1+\delta_3} \chi_{\frac{3}{2}+\mu_2+\mu_4-\mu_3}^{\delta_2+\delta_3+\delta_4}(y_1) \chi_{2+\mu_2+\mu_4}^{\delta_2+\delta_4}(y_2) \chi_{\frac{3}{2}+\mu_2+\mu_3+\mu_4}^{\delta_2+\delta_3+\delta_4}(y_3) \int_{\R^6} \chi_{-1+\mu_2-\mu_1}^{\delta_1+\delta_2}(x_1) \chi_{-1+\mu_3-\mu_2}^{\delta_2+\delta_3}(x_2) \\
	& \chi_{-1+\mu_2-\mu_1}^{\delta_1+\delta_2}(x_3) \chi_{-1+\mu_4-\mu_3}^{\delta_3+\delta_4}(x_4) \chi_{-1+\mu_3-\mu_2}^{\delta_2+\delta_3}(x_5) \\
	& e\biggl(\frac{1-x_2}{x_1}+\frac{1-x_5}{x_3}+\frac{1}{x_6}+\frac{x_5}{x_4 y_1}-y_3 x_1-y_1 y_2\frac{x_4}{x_2}+y_2\frac{(1-x_2)x_3}{x_2}+y_1\frac{(1-x_5) x_6}{x_5}\biggr) dx.
\end{aligned}
\end{equation}
Then the $x_1$, $x_3$, $x_4$ and $x_6$ integrals can be collapsed to $\mathcal{Z}$ functions resulting in the Stade-type integral \eqref{eq:K1111Stade}.
In the case $\delta=0$ and $\sgn(y)=(+,+,+)$, we expect that this should reduce to Stade's actual integral representation for the $GL(4)$ Whittaker function \cite[Thm 2.1]{Stade03} using the conclusion of \cite[Sect. 12.1]{GLnI}, but this is not trivial.

In \eqref{ex:LastIw1111}, we can simply Mellin expand all eight terms of the complex exponential and evaluate the six $x$ integrals.
As usual, we substitute to clean up the exponents on $y$, giving \eqref{eq:Kw1111MB}.

\section{Equating the Mellin-Barnes integrals to the Bessel functions.}
\label{sect:BesselMBProof}

In this section, we prove the Mellin-Barnes integral representations of the Bessel functions given in the previous section.
That is, we prove the identities \eqref{eq:Kw31MB},\eqref{eq:Kw22MB},\eqref{eq:Kw121MB},\eqref{eq:Kw211MB} by comparing the power series expansions with \eqref{eq:KwToJw} and \eqref{eq:CwToR} using \eqref{eq:Jw31},\eqref{eq:Jw22},\eqref{eq:Jw121} and \eqref{eq:Jw211} from the solutions of the differential equations.

\subsection{Hypergeometric Integrals}
We consider an integral
\begin{align*}
	\mathcal{F} = \mathcal{F}(t,\alpha,u,\beta,z,x) :=& \frac{1}{2} \sum_{\ell\in\set{0,1}} \int_{-i\infty}^{i\infty} \frac{G_\ell(s,t,\alpha)}{G_\ell(s,u,\beta)} \chi_{x-s}^\ell(z) f(\ell,s) \frac{ds}{2\pi i},
\end{align*}
where $t\in\C^p$, $\alpha \in \Z^p$, $u \in\C^q$, $\beta \in \Z^q$, the contour is taken so the arguments of all of the $G_\delta$ functions pass to the right of the poles and $f(\ell,s)$ has no poles to the left of $\Re(s)=0$ and is invariant under $\ell \mapsto \ell+2$.
Ignoring the contribution of $f(\ell,s)$ to the convergence of the relevant integrals, if $p>q$ or $p=q$ and $\abs{z} < 1$, we may shift the $u$ contour to $-\infty$; if $p<q$ or $p=q$ and $\abs{z} > 1$ and we may shift $u$ to $+\infty$ to obtain a hypergeometric series.
Similarly, in case $p=q$ and $\abs{z}=1$, we may apply either method by taking the limit as $z \to 1^\pm$, provided the resulting series converges absolutely there.
The convergence of the integrals involved in the contour shifting follows from \eqref{eq:Stirling}.

We assume the first case -- that is, $p>q$ or $p=q$ and $z \in (0,1)$; the opposite case may be handled by symmetry.
Additionally, we assume that $t_j \ne t_k \pmod{\Z}, j \ne k$, and shift the $s$ contour to $-\infty$ to obtain
\begin{align*}
	\mathcal{F} =& \sum_{\ell\in\set{0,1}} \sum_{j=1}^p \sum_{0 \le n \equiv \alpha_j+\ell \summod{2}} \frac{(2\pi i)^n \chi_{x+t_j+n}^\ell(z)}{n!} \frac{\prod_{k\ne j} G_{\alpha_k+\ell}(t_k-t_j-n)}{\prod_{k=1}^q G_{\beta_k+\ell}(u_k-t_j-n)} f(\ell, -t_j-n).
\end{align*}
Substituting $\ell \mapsto \ell+\alpha_j$ (all terms are invariant under $\ell \mapsto \ell+2$), note that $\chi_n^n(z)=z^n$, so we have
\begin{align*}
	\mathcal{F} =& \sum_{j=1}^p \chi_{x+t_j}^{\alpha_j}(z) \sum_{\ell\in\set{0,1}} \sum_{0 \le n \equiv \ell \summod{2}} \frac{(2\pi i z)^n}{n!} \frac{\prod_{k\ne j} G_{\alpha_j+\alpha_k+\ell}(t_k-t_j-n)}{\prod_{k=1}^q G_{\alpha_j+\beta_k+\ell}(u_k-t_j-n)} f(\ell+\alpha_j, -t_j-n).
\end{align*}

The combination of \eqref{eq:GdReflect}, \eqref{eq:GdToRGamma} and \eqref{eq:RdShift} becomes
\begin{align}
\label{eq:Gminusn}
	G_{\gamma+\ell}(-s-n) =& \frac{\pi (-1)^\gamma i^n (2\pi)^{s+n}}{R_\gamma(1+s) \Gamma(1+s+n)},
\end{align}
when $n \equiv \ell \pmod{2}$.
Applying this to $\mathcal{F}$ and combining the two parities for the $n$ sum gives
\begin{equation}
\label{eq:mcFEval}
\begin{aligned}
	\mathcal{F} =& (-1)^{A+B} \pi^{p-q-1} (2\pi)^{U-T} \sum_{j=1}^p (-1)^{(p-q)\alpha_j} (2\pi)^{(p-q)t_j}\chi_{x+t_j}^{\alpha_j}\paren{z} \frac{\prod_{k=1}^q R_{\alpha_j+\beta_k}(1+t_j-u_k)}{\prod_{k\ne j} R_{\alpha_j+\alpha_k}(1+t_j-t_k)} \\
	& \qquad \times \sum_{n=0}^\infty \frac{((2\pi i)^{p-q} z)^n}{n!} \frac{\prod_{k=1}^q \Gamma(1+t_j-u_k+n)}{\prod_{k\ne j} \Gamma(1+t_j-t_k+n)} f(\alpha_j+n, -t_j-n).
\end{aligned}
\end{equation}
where $T=t_1+\ldots+t_p$, $U=u_1+\ldots+u_q$, $A = \alpha_1+\ldots+\alpha_p$ and $B=\beta_1+\ldots+\beta_q$.

\subsection{The $w_{31}$ Mellin-Barnes integral}
For $w_{31}$, deriving the power series expansion \eqref{eq:KwToJw},\eqref{eq:Jw31} is a straight-forward application of \eqref{eq:mcFEval} using $A=\Delta$, $x=\frac{3}{2}$, $z=-y_3$, $p=4$, $q=0$, $t=-\mu$, $\alpha=\Delta-\delta$, $S=T=B=0$, and $f=1$.

\subsection{The $w_{22}$ Mellin-Barnes integral}
This case is again a relatively straight-forward application of \eqref{eq:mcFEval} using $p=6$, $q=0$, $x=2$, $z=y_2$, $t=(\mu_j+\mu_k)_{j<k}$, $\alpha=(\delta_j+\delta_k)_{j<k}$, $A=\Delta$, $T=U=B=0$, and $f(\ell,s)=1/G_\Delta(2s)$.

\subsection{The $w_{121}$ Mellin-Barnes integral}
For $w_{121}$, to maintain absolute convergence, one can either shift both integrals in stages or substitute $(u_1,u_2)=(s_1+s_2,s_1-s_2)$ and shift first $\Re(u_1) \to -\infty$, then shift $\Re(u_2) \to \pm\infty$ as appropriate, at which point the first index becomes $\Min{m_1,m_2}$ while the second is $\Max{m_1,m_2}-\Min{m_1,m_2}$.
In any case, the original integral is clearly Weyl-invariant and the residue at, say, $s_2=\mu_4-m_2$ has no poles at $s_1=-\mu_4-m_1$, so the final series representation is the sum over $m_1,m_2 \in \N_0$ of the residue at $(s_1,s_2)=(-\mu_1-m_1,\mu_4-m_2)$, summed over the Weyl group.
That residue is
\begin{align*}
	& \res_{s_1=-\mu_4-m_1} \res_{s_2=\mu_4-m_2} \what{K}_{121}(s,\ell,\mu,\delta) \\
	&= \frac{4(-1)^{\delta_2+\delta_3+\delta_4+m_1}(2\pi i)^{m_1+m_2}}{m_1!\,m_2!} \\
	& \qquad \times \frac{G_{\ell_1}(-\mu_1-m_1,(\mu_2,\mu_3,\mu_4),(\delta_2,\delta_3,\delta_4)) G_{\ell_2}(\mu_4-m_2,-(\mu_1,\mu_2,\mu_3),\Delta-(\delta_1,\delta_2,\delta_3))}{G_{\ell_1+\ell_2+\Delta}(\mu_4-\mu_1-m_1-m_2)} \\
	&= 4 C_{121}(\mu,\delta) a_{121,m_1,m_2}(\mu),
\end{align*}
if $m_1 \equiv \ell_1+\delta_1 \pmod{2}$, $m_2 \equiv \ell_2+\Delta-\delta_4 \pmod{2}$ and zero otherwise.

\subsection{The $w_{211}$ Mellin-Barnes integral}
Define
\begin{align*}
	& F_{211}((s_1,s_3),(\ell_1,\ell_3),\mu,\delta) = \\
	& \paren{\prod_{j=1}^3 G_{\ell_1+\Delta-\delta_j-\delta_4}(1-s_1+\mu_j+\mu_4) G_{\ell_3+\Delta-\delta_j}(1-s_3+\mu_j)} (-1)^{\ell_3} \frac{1}{2} \sum_{\ell_2\in\set{0,1}} \\
	& \times \int_{\Re(s_2)=\frac{1}{7}} \frac{G_{\ell_2}(s_2,-(\mu_1,\mu_2,\mu_3),\Delta-(\delta_1,\delta_2,\delta_3))}{G_{\ell_1+\ell_2+\Delta-\delta_4}(s_2+s_1+\mu_4) G_{\ell_1+\ell_2+\delta_4}(s_2+1-s_1+\mu_4) G_{\ell_2+\ell_3}(s_2+1-s_3)} \frac{ds_2}{2\pi i}
\end{align*}
so that (using \eqref{eq:GdReflect}) we are trying to show
\begin{align*}
	K_{211}(y,\mu,\delta) =& \frac{(-1)^\Delta}{4} \sum_{\ell_1,\ell_3\in\set{0,1}} \int_{\Re(s_1,s_3)=(\frac{1}{7},\frac{1}{7})} \chi_{2-s_1}^{\ell_1}(y_2) \chi_{\frac{3}{2}-s_3}^{\ell_3}(y_3) \paren{\prod_{j<k} G_{\ell_1+\delta_j+\delta_k}(s_1+\mu_j+\mu_k)} \\
	& \qquad \times G_{\ell_3}(s_3,-\mu,\Delta-\delta) F_{211}((s_1,s_3),(\ell_1,\ell_3),\mu,\delta) \frac{ds_1\, ds_3}{(2\pi i)^2}.
\end{align*}

From \eqref{eq:mcFEval}, with $p=q=3$, $x=0$, $z=f=1$, $t=-(\mu_1,\mu_2,\mu_3)$, $\alpha=\Delta-(\delta_1,\delta_2,\delta_3)$, $u=(s_1+\mu_4,1-s_1+\mu_4,1-s_3)$, $\beta=(\ell_1+\ell_2+\Delta-\delta_4,\ell_1+\ell_2+\delta_4,\ell_2+\ell_3)$, $T=\mu_4$, $U=2-s_3+2\mu_4$, $A=\delta_4$, $B=\ell_3+\Delta$, we have
\begin{align*}
	& F_{211}((s_1,s_3),(\ell_1,\ell_3),\mu,\delta) = \\
	& \paren{\prod_{j=1}^3 G_{\ell_1+\Delta-\delta_j-\delta_4}(1-s_1+\mu_j+\mu_4) G_{\ell_3+\Delta-\delta_j}(1-s_3+\mu_j)} (-1)^{\Delta-\delta_4} (2\pi)^{2-s_3+\mu_4} \pi^{-1} \\
	& \times \sum_{j=1}^3 \frac{R_{\ell_1+\delta_j+\delta_4}(1-s_1-\mu_j-\mu_4) R_{\ell_1+\Delta-\delta_j-\delta_4}(s_1-\mu_j-\mu_4) R_{\ell_3+\Delta-\delta_j}(s_3-\mu_j)}{\prod_{k \in \set{1,2,3}\setminus\set{j}} R_{\delta_j+\delta_k}(1+\mu_k-\mu_j)} \\
	& \times \sum_{n=0}^\infty \frac{1}{n!} \frac{\Gamma(1-s_1-\mu_j-\mu_4+n) \Gamma(s_1-\mu_j-\mu_4+n) \Gamma(s_3-\mu_j+n)}{\prod_{k \in \set{1,2,3}\setminus\set{j}} \Gamma(1+\mu_k-\mu_j+n)}.
\end{align*}

Applying \eqref{eq:RGtoPoch} gives
\begin{align*}
	& F_{211}((s_1,s_3),(\ell_1,\ell_3),\mu,\delta) = \\
	& (-1)^{\Delta+\ell_1+\ell_3} (2\pi)^{-\mu_4} \pi^2 \sum_{j=1}^3 (2\pi)^{-3\mu_j} G_{\ell_1+\delta_j+\delta_4}(1-s_1-\mu_j-\mu_4) \\
	& \prod_{k \in \set{1,2,3}\setminus\set{j}} \frac{G_{\ell_1+\Delta-\delta_k-\delta_4}(1-s_1+\mu_k+\mu_4) G_{\ell_3+\Delta-\delta_k}(1-s_3+\mu_k)}{R_{\delta_j+\delta_k}(1+\mu_k-\mu_j)} \\
	& \sum_{n=0}^\infty \frac{1}{n!} \frac{\Poch{1-s_1-\mu_j-\mu_4}{n} \Poch{s_1-\mu_j-\mu_4}{n} \Poch{s_3-\mu_j}{n}}{\prod_{k \in \set{1,2,3}\setminus\set{j}} \Gamma(1+\mu_k-\mu_j+n)}.
\end{align*}

Using the regularized (pole-free) hypergeometric function, we have
\begin{equation}
\label{eq:F211Fstar}
\begin{aligned}
	& F_{211}((s_1-m_1,s_3-m_2),(\ell_1+m_1,\ell_3+m_2),\mu,\delta) = \\
	& (-1)^{\Delta+\ell_1+\ell_3} i^{m_1} (2\pi)^{3s_1+2s_3-3m_1-2m_2} \pi^{-3} \sum_{w \in \Weyl_3/\Weyl_2} R_{\ell_1+\delta^w_3+\delta^w_4}(1-s_1-\mu^w_3-\mu^w_4) \\
	& \times \prod_{k \in \set{1,2}} \frac{R_{\ell_1+\Delta-\delta^w_k-\delta^w_4}(1-s_1+\mu^w_k+\mu^w_4) R_{\ell_3+\Delta-\delta^w_k}(1-s_3+\mu^w_k)}{R_{\delta^w_3+\delta^w_k}(1+\mu^w_k-\mu^w_3)} \\
	& \times \Gamma(1+m_1-s_1-\mu^w_3-\mu^w_4) \prod_{k \in \set{1,2}} \Gamma(1+m_1-s_1+\mu^w_k+\mu^w_4) \Gamma(1+m_2-s_3+\mu^w_k) \\
	& \times \pFqStar32{1+m_1-s_1-\mu^w_3-\mu^w_4, -m_1+s_1-\mu^w_3-\mu^w_4, -m_2+s_3-\mu^w_3}{1+\mu^w_1-\mu^w_3, 1+\mu^w_2-\mu^w_3}{1},
\end{aligned}
\end{equation}
for $m \in \Z^2$, after applying \eqref{eq:GdToRGamma} and \eqref{eq:RdShift}, where $\Weyl_n$ is the subgroup of the Weyl group that permutes the indices $\set{1,\ldots,n}$.
For $\Re(\mu)=0$, this converges and has no poles on $\Re(s_1), \Re(s_3) < 1$.

Applying \eqref{eq:mcFEval} twice with first
\begin{gather*}
\begin{aligned}
	p=&6,& q=&0,& x=&2,& z=&y_2,& A=&\Delta,& T=&B=U=0,
\end{aligned}\\
\begin{aligned}
	t=&(\mu_j+\mu_k)_{j<k},& \alpha=&(\delta_j+\delta_k)_{j<k},& f(a,b) =&F_{211}((b,s_3),(a,\ell_3),\mu,\delta)
\end{aligned}
\end{gather*}
then
\begin{gather*}
\begin{aligned}
	p=&4,& q=&0,& x=&\tfrac{3}{2},& z=&y_3,& A=&\Delta,& T=&B=U=0,
\end{aligned}\\
\begin{aligned}
	t=&-\mu,& \alpha=&\Delta-\delta,& f(a,b) =&F_{211}((\cdot,b),(\cdot,a),\mu,\delta),
\end{aligned}
\end{gather*}
we have
\begin{equation}
\label{eq:Kw211MBtoPS}
\begin{aligned}
	& K_{211}(y,\mu,\delta) = \\
	& (-1)^\Delta \pi^8 \sum_{j<k} \sum_{j'=1}^4 \frac{(2\pi)^{6(\mu_j+\mu_k)+4\mu_{j'}}\chi_{2+\mu_j+\mu_k}^{\delta_j+\delta_k}(y_2) \chi_{\frac{3}{2}-\mu_{j'}}^{\Delta-\delta_{j'}}(y_3)}{\prod_{\substack{j''<k''\\(j'',k'')\ne(j,k)}} R_{\delta_j+\delta_k+\delta_{j''}+\delta_{k''}}(1+\mu_j+\mu_k-\mu_{j''}-\mu_{k''})} \\
	& \times \frac{1}{\prod_{j''\ne j'} R_{\delta_{j'}+\delta_{j''}}(1-\mu_{j'}+\mu_{j''})} \\
	& \times \sum_{m_1,m_2 \ge 0} \frac{\paren{-64 \pi^6 y_2}^{m_1} \paren{16 \pi^4 y_3}^{m_2}}{\prod_{j''<k''} \Gamma(1+\mu_j+\mu_k-\mu_{j''}-\mu_{k''}+m_1) \prod_{j''=1}^4 \Gamma(1-\mu_{j'}+\mu_{j''}+m_2)} \\
	& \times F_{211}((-\mu_j-\mu_k-m_1,\mu_{j'}-m_2),(m_1+\delta_j+\delta_k,m_2+\Delta-\delta_{j'}),\mu,\delta).
\end{aligned}
\end{equation}
Note: We used $m_1! \, m_2! = \Gamma(1+m_1) \Gamma(1+m_2)$ to simplify the expression a little.

By the $\Weyl_3$ invariance, to verify this identity, it is sufficient to check that:
\begin{enumerate}[label=F\arabic*.]
\item the term with $j=1,k=2,j'=1$ is $0$,
\item the term with $j=1,k=3,j'=1$ is $0$,
\item the term with $j=1,k=4,j'=1$ is $0$,
\item the term with $j=1,k=4,j'=4$ is $0$,
\item the term with $j=1,k=2,j'=4$ is $C_{211}(\mu,\delta) J_{211}(y,\mu,\delta)$,
\item the term with $j=2,k=4,j'=1$ is $C_{211}(\mu^w,\delta^w) J_{211}(y,\mu^w,\delta^w)$ where $w=w_{(1\,4)}^{-1}$,
\item the term with $j=2,k=3,j'=1$ is $C_{211}(\mu^w,\delta^w) J_{211}(y,\mu^w,\delta^w)$ where $w=w_{(1\,3\,4)}^{-1}$,
\item the term with $j=1,k=4,j'=2$ is $C_{211}(\mu^w,\delta^w) J_{211}(y,\mu^w,\delta^w)$ where $w=w_{(1\,4\,2)}^{-1}$.
\end{enumerate}

\subsubsection{The empty terms}
For F5, we have
\[ F_{211}((-\mu_1-\mu_2-m_1,\mu_1-m_2),(m_1+\delta_1+\delta_2,m_2+\Delta-\delta_1),\mu,\delta) = 0, \]
because each of the three terms has a factor $R_0(1)=0$.
Similarly, for F6,
\[ F_{211}((-\mu_1-\mu_3-m_1,\mu_1-m_2),(m_1+\delta_1+\delta_3,m_2+\Delta-\delta_1),\mu,\delta) = 0, \]
and for F7,
\[ F_{211}((-\mu_1-\mu_4-m_1,\mu_1-m_2),(m_1+\delta_1+\delta_4,m_2+\Delta-\delta_1),\mu,\delta) = 0. \]

To check F8, we need to show
\[ F_{211}((-\mu_1-\mu_4-m_1,\mu_4-m_2),(m_1+\delta_1+\delta_4,m_2+\Delta-\delta_4),\mu,\delta) = 0, \]
and removing out common factors (with the identity $R_\delta(1-s) = (-1)^\delta R_\delta(1+s)$) from the two surviving summands reduces this to a hypergeometric identity, i.e.
\begin{align*}
	0 =& \pFqDagger32{1 + m_1 + \mu_1 - \mu_2, -m_1 + \mu_3 -\mu_4, -m_2 - \mu_2 + \mu_4}{1 - \mu_2 + \mu_3, 1 + \mu_1 - \mu_2}{1} \\
	& - \pFqDagger32{1 + m_1 + \mu_1 - \mu_3, -m_1 + \mu_2 -\mu_4, -m_2 - \mu_3 + \mu_4}{1 + \mu_1 - \mu_3, 1 + \mu_2 - \mu_3}{1}
\end{align*}
The case $m_1=0$ follows from \eqref{eq:pFqFirstZero} and Gauss' hypergeometric identity \eqref{eq:Gauss2F1}, while the contiguous relation \cite[eq (3.4)]{Bailey02} implies that the right-hand side of the previous display satisfies the recurrence relation
\begin{align*}
	0 =&m_1 (m_1 + \mu_1 - \mu_2) (m_1 + \mu_1 - \mu_3) (1 + m_1 - \mu_2 - \mu_3) a_{m_1-1} \\
	& \qquad - (1 + m_1 - 2 \mu_2 - 2 \mu_3) (1+m_1+\mu_4-\mu_3) (1+m_1+\mu_4-\mu_2) (m_1 - \mu_2 - \mu_3) a_{m_1+1} \\
	& \qquad - (1 + 2 m_1 - 2 \mu_2 - 2 \mu_3) \biggl(m_1 (1 + m_1 - 2 \mu_2 - 2 \mu_3) (1 + 2 m_2 - 2 \mu_4) \\
	& \qquad \qquad + (\mu_2 + \mu_3) ((1 + 2 m_2 + \mu_1 - 2 \mu_4) (-1 + \mu_2 + \mu_3) + \mu_1 (1 + \mu_1) + \mu_2 \mu_3)\biggr) a_{m_1},
\end{align*}
and this gives the identity for $m_1 \in \N_0$.

\subsubsection{The symmetries}
For $1 \le j < k \le 4$ and $1 \le j' \le 4$, define
\begin{align*}
	& f_{j,k,j',m_1,m_2} = \\
	& \frac{(-1)^\Delta \pi^8 (2\pi)^{6(\mu_j+\mu_k)+4\mu_{j'}} F_{211}((-\mu_j-\mu_k-m_1,\mu_{j'}-m_2),(m_1+\delta_j+\delta_k,m_2+\Delta-\delta_{j'}),\mu,\delta)}{\paren{\prod_{\substack{j''<k''\\(j'',k'')\ne(j,k)}} R_{\delta_j+\delta_k+\delta_{j''}+\delta_{k''}}(1+\mu_j+\mu_k-\mu_{j''}-\mu_{k''})} \paren{\prod_{j''\ne j'} R_{\delta_{j'}+\delta_{j''}}(1-\mu_{j'}+\mu_{j''})}} \\
	& \times \frac{1}{\paren{\prod_{j''<k''} \Gamma(1+\mu_j+\mu_k-\mu_{j''}-\mu_{k''}+m_1)} \paren{\prod_{j''=1}^4 \Gamma(1-\mu_{j'}+\mu_{j''}+m_2)}}.
\end{align*}

By \eqref{eq:F211Symmetries} to see F5 and the symmetries F6, F7 and F8, it suffices to note that
\begin{itemize}
\item $f_{1,2,4,m_1,m_2} = C^*_{211}(\mu,\delta) a^*_{211,m_1,m_2}(\mu)$,
\item $f_{2,4,1,m_1,m_2} = C^*_{211}(\mu^w,\delta^w) b^*_{211,m_1,m_2}(\mu^w)$ where $w=w_{(1\,4)}^{-1}$,
\item $f_{2,3,1,m_1,m_2} = C^*_{211}(\mu^w,\delta^w) c^*_{211,m_1,m_2}(\mu^w)$ where $w=w_{(1\,3\,4)}^{-1}$,
\item $f_{1,4,2,m_1,m_2} = C^*_{211}(\mu^w,\delta^w) b^*_{211,m_1,m_2}(\mu^w)$, where $w=w_{(1\,4\,2)}^{-1}$,
\end{itemize}
which follow from \eqref{eq:F211Fstar}, after writing
\[ C^*_{211}(\mu,\delta) = (-1)^{\delta_3} (2\pi)^{2\mu_1+2\mu_2-\mu_3-3\mu_4} \pi^5 \prod_{(j,k) \in \mathcal{S}_{211}} R_{\delta_j+\delta_k}(1+\mu_j-\mu_k)\Gamma(1+\mu_j-\mu_k). \]
In these cases, only one term of $F_{211}$ is nonzero, after applying $R_0(1)=0$.

\subsection{The $w_{1111}$ Mellin-Barnes integral}
Define
\begin{align*}
	& F_{1111}(s,\ell,\mu,\delta) =\frac{1}{2} (-1)^{\delta_3+\delta_4} \sum_{\eta \in \set{0,1}} \int_{\Re(u)=\epsilon} \\
	& \frac{G_{\eta}(u,(\mu_3,\mu_4),(\delta_3,\delta_4)) G_{\ell_1+\ell_2+\eta+\delta_1+\delta_2}(u+1-s_2-\mu_1-\mu_2) G_{\ell_2+\ell_3+\eta+\Delta}(u+1-s_2-s_3)}{G_{\ell_1+\eta}(u+1-s_1) G_{\ell_2+\eta}(u+1-s_2,-(\mu_1,\mu_2),(\delta_1,\delta_2)) G_{\ell_3+\eta+\delta_1+\delta_2}(u+1-s_3-\mu_1-\mu_2)} \frac{du}{2\pi i},
\end{align*}
so that
\begin{align*}
	K_{1111}(y,\mu,\delta) =& \frac{(-1)^\Delta}{8} \sum_{\ell\in\set{0,1}^3} \int_{\Re(s)=(2\epsilon,2\epsilon,2\epsilon)} \chi_{\frac{3}{2}-s_1}^{\ell_1}(-y_1) \chi_{2-s_2}^{\ell_2}(-y_2) \chi_{\frac{3}{2}-s_3}^{\ell_3}(-y_3) \\
	& \times G_{\ell_1}(s_1,(\mu_1,\mu_2),(\delta_1,\delta_2)) G_{\ell_2}(s_2,(\mu_1+\mu_2,\mu_3+\mu_4),(\delta_1+\delta_2,\delta_3+\delta_4)) \\
	& \times G_{\ell_3}(s_3,-(\mu_1,\mu_2),\Delta-(\delta_1,\delta_2)) F_{1111}(s,\ell,\mu,\delta) \frac{ds}{(2\pi i)^3}.
\end{align*}

We apply \eqref{eq:mcFEval} with
\begin{gather*}
\begin{aligned}
	x=&0,& z=&1,& f=&1,& p&=q=4,
\end{aligned}\\
\begin{aligned}
	t=&(\mu_3,\mu_4,1-s_1-s_2-\mu_1-\mu_2,1-s_2-s_3),& T=&2-2s_2-s_3-2\mu_1-2\mu_2,
\end{aligned}\\
\begin{aligned}
	\alpha=&(\delta_3,\delta_4,\ell_1+\ell_2+\delta_1+\delta_2,\ell_2+\ell_3+\Delta), &A=&\ell_1+\ell_3,
\end{aligned}\\
\begin{aligned}
	u=&(1-s_1,1-s_2-\mu_1,1-s_2-\mu_2,1-s_3-\mu_1-\mu_2), &U=&4-s_1-2s_2-s_3-2\mu_1-2\mu_2,
\end{aligned}\\
\begin{aligned}
	\beta=&(\ell_1,\ell_2+\delta_1,\ell_2+\delta_2,\ell_3+\delta_1+\delta_2), &B=&\ell_1+\ell_3,
\end{aligned}
\end{gather*}
then in terms of the regularized hypergeometric functions, we get
\[ F_{1111}(s,\ell,\mu,\delta) = F_A(s,\ell,\mu,\delta)+F_B(s,\ell,\mu,\delta)+F_C(s,\ell,\mu,\delta)+F_D(s,\ell,\mu,\delta), \]
where
\begingroup
\allowdisplaybreaks
\begin{align*}
	F_A :=& \frac{G_{\ell_1+\delta_3}(s_1+\mu_3) G_{\ell_2+\delta_1+\delta_3}(s_2+\mu_1+\mu_3) G_{\ell_2+\delta_2+\delta_3}(s_2+\mu_2+\mu_3)}{R_{\delta_3+\delta_4}(1+\mu_3-\mu_4) R_{\ell_1+\ell_2+\Delta-\delta_4}(s_1+s_2-\mu_4) R_{\ell_2+\ell_3+\Delta-\delta_3}(s_2+s_3+\mu_3)} \\*
	& \qquad \times (-1)^{\delta_3+\delta_4} (2\pi)^{-2+s_1+2s_2+s_3+2\mu_3-2\mu_4} \pi^3 G_{\ell_3+\Delta-\delta_4}(s_3-\mu_4) \\*
	& \qquad \times \pFqStar43{s_1+\mu_3, s_2+\mu_1+\mu_3, s_2+\mu_2+\mu_3, s_3-\mu_4}{1+\mu_3-\mu_4, s_1+s_2-\mu_4, s_2+s_3+\mu_3}{1}, \\
	F_B :=& \frac{G_{\ell_1+\delta_4}(s_1+\mu_4) G_{\ell_2+\delta_1+\delta_4}(s_2+\mu_1+\mu_4) G_{\ell_2+\delta_2+\delta_4}(s_2+\mu_2+\mu_4)}{R_{\delta_3+\delta_4}(1+\mu_4-\mu_3) R_{\ell_1+\ell_2+\Delta-\delta_3}(s_1+s_2-\mu_3) R_{\ell_2+\ell_3+\Delta-\delta_4}(s_2+s_3+\mu_4)} \\*
	& \qquad \times (-1)^{\delta_3+\delta_4} G_{\ell_3+\Delta-\delta_3}(s_3-\mu_3) (2\pi)^{-2+s_1+2s_2+s_3+2\mu_4-2\mu_3} \pi^3 \\*
	& \qquad \times \pFqStar43{s_1+\mu_4, s_2+\mu_1+\mu_4, s_2+\mu_2+\mu_4, s_3-\mu_3}{1+\mu_4-\mu_3, s_1+s_2-\mu_3, s_2+s_3+\mu_4}{1}, \\
	F_C :=&\frac{G_{\ell_1+\delta_1}(1-s_1-\mu_1) G_{\ell_1+\delta_2}(1-s_1-\mu_2) G_{\ell_2+\delta_1+\delta_2}(1-s_2-\mu_1-\mu_2)}{R_{\ell_1+\ell_2+\Delta-\delta_4}(s_1+s_2-\mu_4) R_{\ell_1+\ell_2+\Delta-\delta_3}(s_1+s_2-\mu_3) R_{\ell_1+\ell_3+\delta_3+\delta_4}(1-s_1+s_3-\mu_1-\mu_2)} \\*
	& \qquad \times (2\pi)^{2-3s_1-2s_2+s_3+2\mu_3+2\mu_4} \pi^3 G_{\ell_1+\ell_2+\ell_3}(1-s_1-s_2+s_3) \\*
	& \qquad \times \pFqStar43{1-s_1-\mu_1, 1-s_1-\mu_2, 1-s_2-\mu_1-\mu_2, 1-s_1-s_2+s_3}{2-s_1-s_2+\mu_4, 2-s_1-s_2+\mu_3, 1-s_1+s_3-\mu_1-\mu_2}{1}, \\
	F_D :=& \frac{G_{\ell_2+\delta_3+\delta_4}(1-s_2-\mu_3-\mu_4) G_{\ell_3+\Delta-\delta_1}(1-s_3+\mu_1) G_{\ell_3+\Delta-\delta_2}(1-s_3+\mu_2)}{R_{\ell_1+\ell_3+\delta_3+\delta_4}(1+s_1-s_3-\mu_3-\mu_4) R_{\ell_2+\ell_3+\Delta-\delta_3}(s_2+s_3+\mu_3) R_{\ell_2+\ell_3+\Delta-\delta_4}(s_2+s_3+\mu_4)} \\*
	& \qquad \times (2\pi)^{2+s_1-2s_2-3s_3+2\mu_1+2\mu_2} \pi^3 G_{\ell_1+\ell_2+\ell_3+\Delta}(1+s_1-s_2-s_3) \\*
	& \qquad \times \pFqStar43{1-s_2-\mu_3-\mu_4, 1-s_3+\mu_1, 1-s_3+\mu_2, 1+s_1-s_2-s_3}{1+s_1-s_3-\mu_3-\mu_4, 2-s_2-s_3-\mu_3, 2-s_2-s_3-\mu_4}{1},
\end{align*}
\endgroup
using the identity $R_\delta(2-s) = (-1)^{\delta+1} R_\delta(s)$.
We split $\what{K}_{1111}$ correspondingly into
\[ \what{K}_{1111}(s,\ell,\mu,\delta) = \what{K}_A(s,\ell,\mu,\delta)+\what{K}_B(s,\ell,\mu,\delta)+\what{K}_C(s,\ell,\mu,\delta)+\what{K}_D(s,\ell,\mu,\delta), \]
and we imagine (as opposed to actually writing it out) factoring each $\what{K}$ term into a trigonometric part (the $R_\delta$ functions), the gamma factors (the $G_\delta$ functions), and the (regularized) hypergeometric function.
We first eliminate the potential poles arising from the trigonometric factors, then consider those of the gamma factors, while the hypergeometric factor has no poles by design.

We first shift the $s_2$ contour to $-\infty$, then the $s_1$ contour, then the $s_3$ contour.

\subsubsection{The trigonometric poles}
Let us rule out the potential poles of the trigonometric factors:
The identities \eqref{eq:4F3Denom1}, \eqref{eq:RdReflect} and \eqref{eq:RdShift} imply
\begin{align*}
	\res_{s_2=-s_1+\mu_4-n} \what{K}_A = -\res_{s_2=-s_1+\mu_4-n} \what{K}_C, \qquad \N_0 \ni n \equiv 1+\ell_1+\ell_2+\Delta-\delta_4 \pmod{2}, \\
	\res_{s_2=-s_3-\mu_3-n} \what{K}_A = -\res_{s_2=-s_3-\mu_3-n} \what{K}_D, \qquad \N_0 \ni n \equiv 1+\ell_2+\ell_3+\Delta-\delta_3 \pmod{2}, \\
	\res_{s_2=-s_3-\mu_4-n} \what{K}_B = -\res_{s_2=-s_3-\mu_4-n} \what{K}_D, \qquad \N_0 \ni n \equiv 1+\ell_2+\ell_3+\Delta-\delta_4 \pmod{2}, \\
	\res_{s_2=-s_1+\mu_3-n} \what{K}_B = -\res_{s_2=-s_1+\mu_3-n} \what{K}_C, \qquad \N_0 \ni n \equiv 1+\ell_1+\ell_2+\Delta-\delta_3 \pmod{2},
\end{align*}
so the sum has no poles there.

There remains the possibility of a pole at $s_1=s_3+\mu_3+\mu_4-m_1$, $\N_0 \ni m_1 \equiv \ell_1+\ell_3+\delta_3+\delta_4 \pmod{2}$ for $\what{K}_C$ and $\what{K}_D$.
Notice that
\[ \res_{s_2=-\mu_i-\mu_j-m_2} \what{K}_C = 0, \qquad m_2 \in \N_0 \]
unless $i=3,j=4$ and $m_2 \equiv \ell_2+\delta_i+\delta_j \pmod{2}$; in this case, at $s_1=s_3+\mu_3+\mu_4-m_1$, $\N_0 \ni m_1 \equiv \ell_1+\ell_3+\delta_3+\delta_4 \pmod{2}$, the zero of $R_{\ell_1+\ell_3+\delta_3+\delta_4}(1-s_1+s_3-\mu_1-\mu_2)$ in the denominator is canceled by the zero of $G_{\ell_1+\ell_2+\ell_3}(1+s_1-s_2-s_3)$ in the numerator.
Similarly,
\[ \res_{s_2=-\mu_i-\mu_j-m_2} \what{K}_D = 0, \qquad m_2 \in \N_0 \]
unless $i=1,j=2$ and $m_2 \equiv \ell_2+\delta_i+\delta_j \pmod{2}$, but if $s_1=s_3+\mu_3+\mu_4-m_1$, $\N_0 \ni m_1 \equiv \ell_1+\ell_3+\delta_3+\delta_4 \pmod{2}$, then we apply \eqref{eq:4F3Denom1}, at which point the product
\[ \Poch{1+s_1-s_2-s_3}{m_1} G_{\ell_1+\ell_2+\ell_3+\Delta}(1+s_1-s_2-s_3) \]
is zero.

\subsubsection{The empty terms}
Now we rule out the potential poles which do not arise from the action of the Weyl group applied to $s_1=-\mu_1-m_1$, $s_2=-\mu_1-\mu_2-m_2$, $s_3=-\mu_1-\mu_2-\mu_3-m_3$.

By the $w_{(1\,2)}$ and $w_{(3\,4)}$ invariance, it's enough to check that the following residues are all zero:\\
\begin{center}\begin{tabular}{c|c|c|c}
	&$s_2$ & $s_1$ & $s_3$\\ \hline
	F1& $-\mu_1-\mu_2-m_2$ & $-\mu_3-m_1$ & -- \\ \hline
	F2& $-\mu_1-\mu_2-m_2$ & $-\mu_1-m_1$ & $-\mu_1-\mu_3-\mu_4-m_3$ \\ \hline
	F3& $-\mu_1-\mu_2-m_2$ & $-\mu_1-m_1$ & $-\mu_2-\mu_3-\mu_4-m_3$ \\ \hline
	F4& $-\mu_1-\mu_3-m_2$ & $-\mu_2-m_1$ & -- \\ \hline
	F5& $-\mu_1-\mu_3-m_2$ & $-\mu_4-m_1$ & -- \\ \hline
	F6& $-\mu_1-\mu_3-m_2$ & $-\mu_1-m_1$ & $-\mu_1-\mu_2-\mu_4-m_3$ \\ \hline
	F7& $-\mu_1-\mu_3-m_2$ & $-\mu_1-m_1$ & $-\mu_2-\mu_3-\mu_4-m_3$ \\ \hline
	F8& $-\mu_1-\mu_3-m_2$ & $-\mu_3-m_1$ & $-\mu_1-\mu_2-\mu_4-m_3$ \\ \hline
	F9& $-\mu_1-\mu_3-m_2$ & $-\mu_3-m_1$ & $-\mu_2-\mu_3-\mu_4-m_3$ \\ \hline
	F10& $-\mu_3-\mu_4-m_2$ & $-\mu_1-m_1$ & -- \\ \hline
	F11& $-\mu_3-\mu_4-m_2$ & $-\mu_3-m_1$ & $-\mu_1-\mu_2-\mu_3-m_3$ \\ \hline
	F12& $-\mu_3-\mu_4-m_2$ & $-\mu_3-m_1$ & $-\mu_1-\mu_2-\mu_4-m_3$ \\ \hline
\end{tabular}\end{center}
All of these follow from the identity \eqref{eq:4F3Denom2}, except F2, F3, F10.
For F10, it is sufficient to apply \eqref{eq:4F3Denom2} to the residues in $s_3$ except at $s_3=-\mu_1-\mu_3-\mu_4-m_3$ and $s_3=-\mu_2-\mu_3-\mu_4-m_3$.

The four remaining cases leave us to show
\begingroup
\allowdisplaybreaks
\begin{align*}
	\text{F2:}&&\pFqDagger43{-m_1 - \mu_1 + \mu_3, -m_2 - \mu_2 + \mu_3, -m_2 - \mu_1 + \mu_3, -m_3 + \mu_2 - 
  \mu_4}{1 + \mu_3 - \mu_4, -m_1 - m_2 - \mu_1 + \mu_3, -m_2 - m_3 - \mu_1 + \mu_3}{1} = \\*
  && \pFqDagger43{-m_1 + \mu_4 - \mu_1, -m_2 + \mu_4 - \mu_2, -m_2 + \mu_4 - \mu_1, -m_3 + \mu_2 - 
  \mu_3}{1 + \mu_4 - \mu_3, -m_1 - m_2 + \mu_4 - \mu_1, -m_2 - m_3 + \mu_4 - \mu_1}{1}, \\
	\text{F3:}&&\pFqDagger43{-m_1 - \mu_1 + \mu_3, -m_2 - \mu_2 + \mu_3, -m_2 - \mu_1 + \mu_3, -m_3 + \mu_1 - 
  \mu_4}{1 + \mu_3 - \mu_4, -m_1 - m_2 - \mu_1 + \mu_3, -m_2 - m_3 - \mu_2 + \mu_3}{1} = \\*
  && \pFqDagger43{-m_1 - \mu_1 + \mu_4, -m_2 - \mu_2 + \mu_4, -m_2 - \mu_1 + \mu_4, -m_3 + \mu_1 - 
  \mu_3}{1 - \mu_3 + \mu_4, -m_1 - m_2 - \mu_1 + \mu_4, -m_2 - m_3 - \mu_2 + \mu_4}{1}, \\
	\text{F10a:}&&\pFqDagger43{-m_1 - \mu_1 + \mu_3, -m_2 + \mu_1 - \mu_4, -m_2 + \mu_2 - \mu_4, -m_3 + \mu_2 - 
  \mu_4}{1 + \mu_3 - \mu_4, -m_1 - m_2 + \mu_2 - \mu_4, -m_2 - m_3 + \mu_2 - \mu_4}{1} = \\*
  && \pFqDagger43{-m_1 - \mu_1 + \mu_4, -m_2 + \mu_1 - \mu_3, -m_2 + \mu_2 - \mu_3, -m_3 + \mu_2 - 
  \mu_3}{1 - \mu_3 + \mu_4, -m_1 - m_2 + \mu_2 - \mu_3, -m_2 - m_3 + \mu_2 - \mu_3}{1}, \\
	\text{F10b:}&&\pFqDagger43{-m_1 - \mu_1 + \mu_3, -m_2 + \mu_1 - \mu_4, -m_2 + \mu_2 - \mu_4, -m_3 + \mu_1 - 
  \mu_4}{1 + \mu_3 - \mu_4, -m_1 - m_2 + \mu_2 - \mu_4, -m_2 - m_3 + \mu_1 - \mu_4}{1} = \\*
  && \pFqDagger43{-m_1 - \mu_1 + \mu_4, -m_2 + \mu_1 - \mu_3, -m_2 + \mu_2 - \mu_3, -m_3 + \mu_1 - 
  \mu_3}{1 - \mu_3 + \mu_4, -m_1 - m_2 + \mu_2 - \mu_3, -m_2 - m_3 + \mu_1 - \mu_3}{1},
\end{align*}
\endgroup
for all $m \in \N_0^3$.

Note that \eqref{eq:4F3genrel} implies
\begin{equation}
\label{eq:4F3normalizedgenrel}
\begin{aligned}
	0 =& (a_3 a_4 (b_2-a_2) + b_1 (a_3 (a_2 - a_4) + a_2(a_4 - b_2)) \\
 	& \qquad + 
 a_1 (a_3 a_4 + (a_2 - a_3 - a_4) b_2 + b_1 (-a_2 + b_2))) c_{0,0,0} \\
	&+ (a_1 - b_1) (a_2 - b_2)(b_1-b_2) c_{0,-1,0} \\
	&- (b_1-a_1)(a_3 - b_2) (b_2 - a_4) c_{0,0,-1} \\
	&- (a_3 - b_1) (b_1 - a_4) (a_2-b_2) c_{-1,0,0},
\end{aligned}
\end{equation}
where
\begin{align*}
	c_{m_1,m_2,m_3} =& \pFqDagger43{a_1-m_1, a_2-m_3, a_3-m_2, a_4-m_2}{b_1-m_1-m_2, b_2-m_2-m_3, b_3}{z}.
\end{align*}

For F2, after the appropriate substitutions, we see that \eqref{eq:4F3normalizedgenrel} implies both the left- and right-hand sides satisfy
\begin{align*}
	0 =& \bigl(-m_1^2 (m_2 + \mu_1 + \mu_2 - \mu_3 - \mu_4) + m_1 (m_2 - \mu_1 + \mu_2) (m_2 + \mu_1 + \mu_2 - \mu_3 - \mu_4) \\
	& \qquad + m_2 m_3 (-m_2 + m_3 - 2 \mu_2 + \mu_3 + \mu_4)\bigr) c_{0,0,0} \\
	& - m_2 (m_1 - m_3) (m_2 + \mu_1 + \mu_2 - \mu_3 - \mu_4) c_{0,-1,0} - m_2 m_3 (m_3 + \mu_1 - \mu_2) c_{0,0,-1} \\
	& + m_1 (m_1 + \mu_1 - \mu_2) (m_2 + \mu_1 + \mu_2 - \mu_3 - \mu_4) c_{-1,0,0}.
\end{align*}
The other cases are identical.

\subsubsection{The actual residues}
By the $w_{(1\,2)}$ and $w_{(3\,4)}$ invariance, it's enough to check the following residues:\\
\begin{center}\begin{tabular}{c|c|c|c|c}
	$s_2$ & $s_1$ & $s_3$ & resulting residue & Weyl element\\ \hline
	$-\mu_1-\mu_2-m_2$ & $-\mu_1-m_1$ & $-\mu_1-\mu_2-\mu_3-m_3$ & $8 a_{1111,m}(\mu,\delta)$ & --\\ \hline
	$-\mu_1-\mu_3-m_2$ & $-\mu_1-m_1$ & $-\mu_1-\mu_2-\mu_3-m_3$ & $8 b_{1111,m}(\mu^w,\delta^w)$ & $w=w_{(2\,3)}^{-1}$ \\ \hline
	$-\mu_1-\mu_3-m_2$ & $-\mu_1-m_1$ & $-\mu_1-\mu_3-\mu_4-m_3$ & $8 c_{1111,m}(\mu^w,\delta^w)$ & $w=w_{(2\,3\,4)}^{-1}$ \\ \hline
	$-\mu_1-\mu_3-m_2$ & $-\mu_3-m_1$ & $-\mu_1-\mu_2-\mu_3-m_3$ & $8 d_{1111,m}(\mu^w,\delta^w)$ & $w=w_{(2\,1\,3)}^{-1}$ \\ \hline
	$-\mu_1-\mu_3-m_2$ & $-\mu_3-m_1$ & $-\mu_1-\mu_3-\mu_4-m_3$ & $8 e_{1111,m}(\mu^w,\delta^w)$ & $w=w_{(2\,1\,3\,4)}^{-1}$ \\ \hline
	$-\mu_3-\mu_4-m_2$ & $-\mu_3-m_1$ & $-\mu_1-\mu_3-\mu_4-m_3$ & $8 f_{1111,m}(\mu^w,\delta^w)$ & $w=w_{(1\,3)(2\,4)}^{-1}$ \\ \hline
\end{tabular}\end{center}
Here we are using $z_{1111,m}(\mu,\delta) = C_{w_l}(\mu, \delta) z_{1111,m}(\mu)$ for brevity, and as usual, we have the conditions that the residues are zero unless $m_1 \equiv \ell_1+\delta_i$, $m_2 \equiv \ell_2+\delta_i+\delta_j$, $m_3 \equiv \ell_3+\delta_i+\delta_j+\delta_k \pmod{2}$.

\section{Interchange of Integrals}
\label{sect:IoI}
\subsection{The BKY Lemma}
The reader may find the formulation of the following lemmas somewhat silly, but rather than attach some new notation to the problem, the author has simply listed out all of the relevant cases.
\begin{lem}
Let $C \ge 1$ and suppose
\[ x \in \piecewise{[-4C,-C] \cup [C,4C] & \If C > 1, \\ [-4,4] & \If C=1.} \]
Then for $f(x)$ given by any of
\begin{align*}
	\frac{C x}{1+x^2}, && \frac{C^2}{1+x^2}, && \frac{\sqrt{1+x^2}}{C}, && \frac{C}{\sqrt{1+x^2}}, && \frac{C^2 x}{\sqrt{1+x^2}}, && \frac{x}{C}, && \frac{C}{x},
\end{align*}
we have
\[ \paren{C \frac{d}{dx}}^j f(x) \ll_j 1, \]
for all $j \ge 1$, provided $C>1$ for the case $f(x) = C/x$.
\end{lem}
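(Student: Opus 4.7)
The plan is to reduce the problem to verifying bounded derivatives on a compact domain by the substitution $u = x/C$, under which $C \frac{d}{dx} = \frac{d}{du}$. The range of $u$ becomes $[-4,-1] \cup [1,4]$ if $C > 1$ and $[-4,4]$ if $C = 1$. Setting $\epsilon := 1/C \in (0,1]$, it then suffices to show that each of the seven listed functions, expressed in terms of $u$ alone (with $\epsilon$ as a parameter), has all $u$-derivatives of order $j \ge 1$ bounded by a constant depending only on $j$.

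For the $C = 1$ case, the excluded function $C/x$ drops out, and each of the remaining six is $C^\infty$ on the compact interval $[-4,4]$ (since $1+x^2 \ge 1$ and $\sqrt{1+x^2} \ge 1$), so their derivatives are bounded by continuity. For $C > 1$, I rewrite six of the functions as
\[
	\frac{u}{\epsilon^2+u^2},\quad \frac{1}{\epsilon^2+u^2},\quad \sqrt{\epsilon^2+u^2},\quad \frac{1}{\sqrt{\epsilon^2+u^2}},\quad u,\quad \frac{1}{u}.
\]
On $|u| \in [1,4]$, $\epsilon^2 + u^2 \in [1,17]$ and $|u| \ge 1$, so each of these is jointly smooth in $(u,\epsilon) \in [1,4] \times (0,1)$ (up to sign of $u$) with $u$-derivatives bounded uniformly in $\epsilon$.

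The anticipated main obstacle is the remaining function $f_5(x) = C^2 x/\sqrt{1+x^2}$, which becomes $C^2 \cdot u/\sqrt{\epsilon^2+u^2}$ and hence carries a large prefactor $C^2$ that must be absorbed by the derivatives. The key observation is that for $|u| \ge 1$,
\[
	\frac{u}{\sqrt{\epsilon^2+u^2}} = \operatorname{sgn}(u)\sqrt{1 - \frac{\epsilon^2}{\epsilon^2+u^2}} = \operatorname{sgn}(u) + \epsilon^2\, h(u,\epsilon^2),
\]
where $h$ is jointly smooth on $\{|u| \ge 1\} \times [0,1]$ by Taylor expansion of $\sqrt{1-z}$ (the argument $\epsilon^2/(\epsilon^2+u^2)$ being in $[0,1/2]$). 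Since $\operatorname{sgn}(u)$ is locally constant on our domain, all its $u$-derivatives vanish, giving
\[
	\partial_u^j f_5 = C^2\epsilon^2 \partial_u^j h(u,\epsilon^2) = \partial_u^j h(u,\epsilon^2) \ll_j 1, \qquad j \ge 1.
\]
Equivalently, a direct induction shows $\partial_u^j f_5 = C^{2j+1} P_j(u)/(1+C^2 u^2)^{(2j+1)/2}$ for polynomials $P_j$ of bounded degree, and since $1 + C^2 u^2 \ge C^2$ on $|u| \ge 1$, each such term is $O_j(1)$. All the other cases follow from the uniform boundedness of $\epsilon^2+u^2$ on the compact $u$-domain, with the $f_5$ bookkeeping being the only place the argument is not immediate.
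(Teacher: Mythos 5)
Correct, and essentially the paper's own proof dressed differently: your change of variables $u=x/C$, $\epsilon=1/C$ realizes the same normalization the paper effects by writing the functions as $\sgn(x)^\delta(x/C)^\alpha(1+x^{-2})^\beta$, with your joint-smoothness-on-a-compact-domain argument standing in for the paper's binomial-series expansion of $(1+x^{-2})^\beta$. For the exceptional $C^2x/\sqrt{1+x^2}$, your $\sgn(u)+\epsilon^2 h(u,\epsilon^2)$ decomposition (using $C^2\epsilon^2=1$ to kill the prefactor once a $u$-derivative lands) is precisely the cancellation the paper exhibits by computing one explicit $C\,\tfrac{d}{dx}$ derivative.
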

\begin{proof}
The lemma is trivially true for $C=1$, so suppose $C > 1$, then all but one of these may be written in the form
\[ \sgn(x)^\delta \paren{\frac{x}{C}}^\alpha \paren{1+x^{-2}}^\beta = \sgn(x)^\delta \paren{\frac{x}{C}}^\alpha \sum_{k=0}^\infty \binom{\beta}{k} x^{-2k}, \]
which clearly satisfies the conclusion, by the known properties of the binomial series (i.e. the ratio test).
In the exceptional case, we have
\[ C \frac{d}{dx} \frac{C^2 x}{\sqrt{1+x^2}} = \frac{C^3}{(1+x^2)^{3/2}}, \]
and the same argument applies.
\end{proof}

Directly from \cite[Lem. 8.1]{BKY01}, we have
\begin{lem}
Let $\epsilon>0$, $D \ge 1$, $\log_2 C \in \N_0$, $A_i \in \R$ and $A_7=A_8=0$ whenever $C=1$.
Suppose $f$ is smooth and compactly supported on
\[ \piecewise{[-4C,-C] \cup [C,4C] & \If C > 1, \\ [-4,4] & \If C=1,} \]
with $f^{(j)}(x) \ll_{j,\epsilon} D^{j\epsilon/10} C^{-j}$.
For
\[ \phi(x) = \frac{A_1 x}{1+x^2}+\frac{A_2}{1+x^2}+A_3\sqrt{1+x^2}+\frac{A_4}{\sqrt{1+x^2}}+\frac{A_5 x}{\sqrt{1+x^2}}+A_6 x+\frac{A_7}{x}, \]
consider the integral
\[ \mathcal{I}:=\int_\R f(x) \e{\phi(x)} dx. \]
We have $\mathcal{I} \ll_B D^{-B}$ for any $B>0$ unless
\[ C\abs{\phi'(x)} \le D^\epsilon \sqrt{1+\sum_i \wtilde{A}_i} \]
for some $x$ in the support of $f$, where
\[ \wtilde{A}_i = \abs{A_i} \times \piecewise{C & \If i=3,6,\\C^{-1} & \If i=1,4,7, \\ C^{-2} & \If i=2,5.} \]
\end{lem}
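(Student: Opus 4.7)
The plan is to reduce the lemma to the non-stationary phase argument of \cite[Lem.~8.1]{BKY01} by using the previous lemma to extract the requisite derivative bounds on $\phi$. Set $S := 1 + \sum_i \wtilde{A}_i$. Each of the seven summands of $\phi$ has the shape $\wtilde{A}_i \cdot h_i(x)$, where $h_i$ is one of the seven normalized functions appearing in the previous lemma; for instance, $\tfrac{A_1 x}{1+x^2} = \wtilde{A}_1 \cdot \tfrac{C x}{1+x^2}$, $\tfrac{A_2}{1+x^2} = \wtilde{A}_2 \cdot \tfrac{C^2}{1+x^2}$, $A_3 \sqrt{1+x^2} = \wtilde{A}_3 \cdot \tfrac{\sqrt{1+x^2}}{C}$, $A_6 x = \wtilde{A}_6 \cdot \tfrac{x}{C}$, and analogously for $A_4,A_5,A_7$. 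The previous lemma then yields
\[ \abs{\paren{C \tfrac{d}{dx}}^j \phi(x)} \ll_j \sum_i \wtilde{A}_i \le S, \qquad j \ge 1, \]
on the support of $f$, i.e.\ $\abs{\phi^{(j)}(x)} \ll_j S/C^j$.

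Next, suppose the stated alternative fails, so that $C\abs{\phi'(x)} > D^\epsilon \sqrt{S}$ everywhere on the support of $f$. I would then apply $k$ rounds of integration by parts via the operator $L[h] := -\tfrac{d}{dx}\paren{h/(2\pi i \phi')}$, for which $\int L[h] \cdot \e{\phi} dx = \int h \cdot \e{\phi} dx$ (boundary terms vanish by compact support), giving $\mathcal{I} = \int L^k[f] \cdot \e{\phi} dx$. A Leibniz/Fa\`a di Bruno expansion of $L^k[f]$ produces a sum of products of a derivative $f^{(a_0)}$ with various $\phi^{(b_\ell+1)}$ divided by a power of $\phi'$; substituting the hypothesis $\abs{f^{(j)}} \ll_j D^{j\epsilon/10} C^{-j}$, the freshly proved $\abs{\phi^{(j)}} \ll_j S C^{-j}$, and $\abs{\phi'} > D^\epsilon \sqrt{S}/C$, each such summand is bounded by $D^{-9k\epsilon/10}$ up to $k$-dependent constants and harmless powers of $S^{-1/2}$. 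Integrating in $x$ over the support (of length $\ll C$) gives $\abs{\mathcal{I}} \ll_k C \cdot D^{-9k\epsilon/10}$, and taking $k$ sufficiently large in terms of $B$, $\epsilon$ (absorbing any tacit polynomial growth of $C$ in $D$, which is the regime of interest) yields $\mathcal{I} \ll_B D^{-B}$.

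The main obstacle is the combinatorial bookkeeping in the Leibniz expansion of $L^k[f]$: verifying that each of the arising terms picks up a factor $\asymp D^{-\epsilon}$ per application of $L$ and that no denominator $\phi'^m$ becomes too small to absorb by the numerator. This is standard and is precisely what \cite[Lem.~8.1]{BKY01} packages, so once the derivative bounds on $\phi$ have been established via the previous lemma, the stated conclusion follows by direct quotation with no further work.
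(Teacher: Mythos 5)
Your proposal takes essentially the same route as the paper: the paper gives no written proof beyond the phrase ``Directly from \cite[Lem.~8.1]{BKY01},'' and you likewise conclude by deducing the requisite derivative bounds $\abs{\paren{C\tfrac{d}{dx}}^j \phi} \ll_j \sum_i \wtilde{A}_i$ from the preceding lemma via the normalization $\phi = \sum_i \wtilde{A}_i\, h_i(x)$ and then citing \cite[Lem.~8.1]{BKY01} outright. The intervening integration-by-parts sketch reconstructs what that reference packages but adds nothing beyond the quotation, which is exactly the paper's stance.
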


\begin{cor}
\label{cor:BKYLemma}
For $\phi,f,C,D,\mathcal{I},B,\epsilon$ as in the lemma, suppose each $A_i$ is a sum of terms $A_i = \sum_j A_{i,j}$ and define $\wtilde{A}_{i,j}$ as in the lemma.
Then at least one of the following is true:
\begin{enumerate}
\item[1.] $\mathcal{I} \ll_B D^{-B}$.
\item[2.] All $\wtilde{A}_{i,j} \le D^{3\epsilon}$.
\item[3.] The largest two terms are proportional, i.e. $\wtilde{A}_{i,j} \asymp \wtilde{A}_{i',j'}$ for some $(i,j) \ne (i',j')$.
\item[4.] The largest term is some $\wtilde{A}_{i,j}$ with $i\in\set{1,2,3,4}$ and $C=1$.
\end{enumerate}
\end{cor}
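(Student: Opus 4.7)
The plan is to reduce the corollary to a direct application of the preceding BKY-type lemma together with a case analysis of which of the $\wtilde A_{i,j}$ dominates $C|\phi'(x)|$ on the support of $f$. First, if 1 holds we are done; otherwise the lemma produces some $x_0$ in the support of $f$ at which $C|\phi'(x_0)| \le D^\epsilon \sqrt{\,1+\sum_i \wtilde A_i\,}$. If 2 holds we are done; otherwise let $(i_*,j_*)$ maximize $\wtilde A_{i,j}$ and set $M := \wtilde A_{i_*,j_*} > D^{3\epsilon}$. The goal is to show that if 4 also fails then 3 must hold.

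The key input is the previous lemma applied one term at a time: for each of the seven model functions $\psi_i$ (appearing with coefficient $A_i$ in $\phi$), the derivative satisfies $|C\psi_i'(x)| \asymp \wtilde A_i/|A_i|$ uniformly on the support of $f$, provided either $C>1$ (so $|x|\asymp C$ stays away from the zeros of the $\psi_i'$) or $C=1$ with $i \in \set{5,6}$ (where $\psi_5'=(1+x^2)^{-3/2}$ and $\psi_6'=1$ are bounded below on $[-4,4]$). For $C=1$ and $i\in\set{1,2,3,4}$, the derivative $\psi_i'$ vanishes somewhere in $[-4,4]$, and this is precisely the exceptional configuration singled out by case 4; the case $i=7$, $C=1$ is excluded by hypothesis. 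Hence if 4 fails, then $|A_{i_*,j_*} C\psi_{i_*}'(x)|\asymp M$ uniformly for $x$ in the support, and in particular at $x_0$.

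Now suppose for contradiction that 3 also fails, so every $\wtilde A_{i,j}$ other than $M$ is smaller than $M/K$ for an absolute constant $K$ to be chosen large. Using $|A_i| \le \sum_j |A_{i,j}|$, the triangle inequality gives
\[ \Bigl|\,C\phi'(x_0)-A_{i_*,j_*}\,C\psi_{i_*}'(x_0)\,\Bigr| \le \sum_{(i,j)\ne(i_*,j_*)} |A_{i,j}\,C\psi_i'(x_0)| \ll M/K, \]
so $|C\phi'(x_0)|\asymp M$ for $K$ large, while $\sum_i \wtilde A_i \le \sum_{i,j}\wtilde A_{i,j}\ll M$. The BKY inequality at $x_0$ then reads $M \ll D^\epsilon \sqrt{M+1}$, forcing $M\ll D^{2\epsilon}$, which contradicts $M>D^{3\epsilon}$ once $D$ is large.

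I expect the main obstacle to be purely bookkeeping rather than conceptual: one must verify the uniform $\asymp$ estimate $|C\psi_i'(x)|\asymp \wtilde A_i/|A_i|$ separately for each of the seven model functions on the appropriate support --- which is essentially already done by the first lemma --- and fix the implied constants in ``proportional'' of case 3 together with the threshold $K$ so that the dichotomy between ``strictly dominant'' and ``comparable'' is unambiguous. Everything else is immediate from the BKY lemma and the triangle inequality.
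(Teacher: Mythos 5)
Your proof is correct and follows essentially the same route as the paper: apply the lemma to produce a point $x_0$ where $C|\phi'(x_0)|\le D^\epsilon\sqrt{1+\sum_i\wtilde A_i}$, break up $\phi'$ into the individual contributions $A_{i,j}\psi_i'$, bound each $|A_{i,j}\,C\psi_i'(x_0)|\asymp\wtilde A_{i,j}$ on the dyadic support (which is where the first lemma enters, and which fails precisely when $C=1$ and $i\in\{1,2,3,4\}$ because $\psi_i'$ has a zero in $[-4,4]$), and observe that the resulting contradiction $M\ll D^\epsilon\sqrt{M}$ can only be avoided via case 3 or case 4. The paper states this dichotomy more tersely (``the largest two terms must cancel to some degree'') but the substance, including the identification of the term-wise stationary points near $x=\pm1$ and $x=0$, is identical to yours.
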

\begin{proof}
The right-hand side in the bound of the lemma is
\[ \le D^\epsilon \sqrt{1+\sum_{i, j} \wtilde{A}_{i,j}}. \]
If any of the $\wtilde{A}_{i,j} > D^{3\epsilon}$, we see that for some $x$ in the support of $f$, the absolute value of $C \phi'(x)$ is strictly less than the sum of the absolute values of its terms, hence the largest two terms must cancel to some degree.
The case 4 comes because for $i=1$,
\[ \frac{d}{dx} \frac{x}{1+x^2} = \frac{1-x^2}{1+x^2} \]
may be close to zero if $x$ is close $\pm 1$, and $i=2,3,4$ have similar difficulties (term-wise stationary points) near $x=0$.
\end{proof}
Note: Clearly in case 3, the largest two terms must also have differing signs in the derivative.
Similarly, there may be more than one or two terms which exceed the error bound, in which case extra cancellation must occur, but this formulation is sufficient for our purposes.
We also point out that replacing $x$ with $\abs{x}$ in any term of $\phi(x)$ does not change the result when $C>1$ as $\abs{x}$ is nicely differentiable away from zero.

\subsection{The setup}
For this section only, define the symbols $A \ll^* B$, $A \gg^* B$, $A \asymp^* B$ to mean $A \ll \abs{C}^{o(1)} B$, $A \gg \abs{C}^{-o(1)} B$, and $B \abs{C}^{-o(1)} \ll A \ll \abs{C}^{o(1)} B$, respectively.
We write the negation of $A \ll^* B$ as $A \ggg^* B$; this requires some forgiveness from the reader in regards to arbitrary constants, but one can check that nothing is circular about the following argument.

For the Strong Interchange of Integrals, we would like to define the Bessel function by the expression
\[ K(y,\mu,\delta) W_\sigma(t,\mu,\delta) = \int_{\wbar{U}_w(\R)} W_\sigma(ywxt,\mu,\delta) \wbar{\psi_I(x)} dx, \]
where $t \in Y$, $y \in Y_w$ and $W_\sigma(t,\mu,\delta)$ is the Whittaker function of $K$-type $\sigma$ and parameters $\mu,\delta$.
Unfortunately, the integral does not converge absolutely; the Strong Interchange of Integrals Conjecture of \cite[Sect. 10]{GLnI} instead proposes a choice of coordinates for such that the Riemann integral converges (conditionally, but) rapidly.

If we write $ywxt = x^* y^* k^*$ with $x^* \in U(\R)$, $y^* \in Y^+$, $k^* \in K$, the integral becomes
\begin{align}
\label{eq:KwNaiveDef}
	K_w(y,\mu,\delta) W_\sigma(t,\mu,\delta) =& \int_{\wbar{U}_w(\R)} \psi(x^*)\wbar{\psi_I(x)} W_\sigma(y^*,\mu,\delta) \sigma(k^*) dx.
\end{align}
We define
\begin{align}
\label{eq:KwPhase}
	\e{\phi(x)} =& \psi(x^*)\wbar{\psi_I(x)}
\end{align}
and call $\phi(x)=\phi_w(t,y,x)$ is the ``phase''.
Note that (the entries of) $W_\sigma(y^*,\mu,\delta)$ has super-polynomial decay in the coordinates of $y^*$ unless $y_i^* \ll 1$ and for the choice of coordinates of \cite[Sect. 10]{GLnI}, the entries of $\sigma(k^*)$ may be expressed as trigonometric polynomials in (the argument of) $\frac{1+i x_j}{\sqrt{1+x_j^2}}$ for each coordinate.
Thus the factor $W_\sigma(y^*,\mu,\delta) \sigma(k^*)$ satisfies
\[ \paren{\prod_j x_j^{m_j} \frac{\partial^{m_j}}{\partial x_j^{m_j}}} W_\sigma(y^*,\mu,\delta) \sigma(k^*) \ll_{m,\mu} 1. \]

Finally, \cite[Sect. 10]{GLnI} applies a dyadic partition of unity to the integral.
We roll $W_\sigma(y^*,\mu,\delta) \sigma(k^*)$ into some $g(\cdot,C) : \wbar{U}_w(\R) \to \C$ satisfying
\[ \paren{\prod_j x_j^{m_j} \frac{\partial^{m_j}}{\partial x_j^{m_j}}} g(x,C) \ll_m \abs{C}^{\epsilon \abs{m}} \]
that is smooth and supported on
\[ x_j \in \piecewise{[-4C_j,-C_j] \cup [C_j,4C_j] & \If C_j > 1, \\ [-4,4] & \If C_j=1.} \]
Then the explicit Strong Interchange of Integrals Conjecture becomes
\begin{align}
\label{eq:IoIBound}
	\int_{\wbar{U}_w(\R)} \e{\phi(x)} g(x,C) dx \ll_{A,\epsilon}& \abs{C}^{-A}
\end{align}
with $t \in Y$, $y \in Y_w$ satisfying $\abs{t_i}, \abs{y_i} \asymp 1$.
We say the oscillatory integral satisfying \eqref{eq:IoIBound} is ``negligible'' and we assume, for a contradiction, that this is not the case.
The use of contradiction is purely for notational simplicity and the contradiction will generally be reached when we have forced all $C_j \ll^* 1$, which implies $\abs{C} \ll 1$ so that \eqref{eq:IoIBound} is trivially true.

Suppose some addend $f(x)$ of $\phi(x)$ satisfies $\phi_i(x) \ll^* 1$.
It is generally the case that differentiating the terms of $\phi(x)$ save powers of the $C_j$, i.e. such an $f(x)$ will then satisfy
\[ \paren{\prod_j C_j^{m_j} \frac{\partial^{m_j}}{\partial x_j^{m_j}}} f(x) \ll_m \abs{C}^{\epsilon \abs{m}}, \]
and hence the same for $\e{f(x)}$.
We may then include such a factor in the weight function $g(x,C)$ without altering our assumptions, and so we will.
Similarly, for $C_j > 1$, we can write, e.g.
\[ \frac{x_j}{\sqrt{1+x_j^2}} = \sgn(x_j)+f(x_j), \qquad f(x_j) = \frac{-\sgn(x_j)}{\sqrt{1+x_j^2}\paren{\abs{x_j}+\sqrt{1+x_j^2}}} \]
where now $f(x_j)$ is smooth on the support of $g$ (which excludes $x_j=0$) with
\[ C_j^m \frac{d^m}{dx_j^m} f(x_j) \ll_m C_j^{-2}, \]
and this will allow us to further simplify the phase function.
The goal at each step is to reduce the number of non-negligible terms in $\phi(x)$.

We start with a simple case, which can be done for all $GL(n)$ and we work this out by hand.

\subsection{Interchange of integrals for $w_{n,1}$}

For the Weyl element $w_{n,1}$, naming the coordinate at position $1,i+1$ of the $x$-matrix $x_i$ and the same for $C_i$, the phase is
\begin{align*}
	\phi(x) =& -t_1 x_1+y^*_n x_n-\sum_{i=1}^{n-1} \frac{y_i x_i x_{i+1}}{\sqrt{1+x_i^2}},
\end{align*}
where
\begin{align*}
	y^*_i :=& \frac{\sqrt{1+x_{i+1}^2}}{\sqrt{1+x_i^2}}, \qquad i=1,\ldots,n-1, \\
	y^*_n :=& \frac{y_n}{\sqrt{1+x_2^2}\cdots\sqrt{1+x_{n-1}^2}(1+x_n^2)}.
\end{align*}
The middle term in the phase is $\ll 1$, so we move it to the weight function and replace the phase with
\[ \phi(x) = -t_1 x_1-\sum_{i=1}^{n-1} \frac{y_i x_i x_{i+1}}{\sqrt{1+x_i^2}}. \]
The conditions $y^*_i \ll^* 1$ imply
\begin{align}
\label{eq:wn1WhittBd}
	C_{i+1} \ll^* C_i
\end{align}
for $i = 1,\ldots, n-1$.

From \cref{cor:BKYLemma}, if it were the case that $C_n \ggg^* 1$, we would conclude the bound \eqref{eq:IoIBound} holds unless $C_{n-1}=1$, but this violates the assumption \eqref{eq:wn1WhittBd}.
Assuming $C_n \ll^* 1$, we may remove the $i=n-1$ term to the weight function and apply the same reasoning inductively to conclude the bound holds unless all $C_i \ll^* 1$, in which case the bound is trivially true.

\subsection{The algorithm}
\label{sect:algorithm}
For the remaining cases on $GL(4)$, we apply a computer algebra package.
The algorithm is quite simple and is given in Mathematica code in \cref{sect:algorithmCode}; most of the code presented there is actually for converting human-readable formulas into the format required for the algorithm.
We note that the code uses the coordinates $x_{i,j}$ for the entry at position $(i,j)$ in the $x$-matrix, rather than the coordinates \eqref{eq:GwlCoordsX} that we use in the paper.

To start, we take some subset $\mathcal{S}$ of the indices $j$ of the coordinates of $\wbar{U}_w$ and assume that $C_j=1$ for all $j \in \mathcal{S}$ while $C_j > 1$ for all $j \notin \mathcal{S}$.
We will repeat the following process for all possible choices of $\mathcal{S}$; the case where $\mathcal{S}$ contains all of the indices is trivial, so we exclude it.
The goal is to build a Boolean expression which minimally describes the every non-negligible case resulting from applying \cref{cor:BKYLemma} to the $x_j$ derivative for every coordinate of $\wbar{U}_w$ (including those of $\mathcal{S}$).

All of the constraints may be viewed as linear inequalities (sometimes equalities) on the logarithms of the $C_j$, up to a summand $\BigO{\epsilon \abs{C}}$; e.g. something of the form
\[ \frac{\sqrt{1+x_1^2}}{\sqrt{1+x_3^2}} \ll^* 1 \]
translates to
\[ \log C_1 \le \log C_3+\BigO{\epsilon \abs{C}}. \]
The term $+\BigO{\epsilon \abs{C}}$ is assumed on (and dropped from) all of the inequalities in the algorithm.

A complication for those indices $j \in \mathcal{S}$ is that $x_j$ and $1-x_j^2$ may fail to be $\asymp C_j$ or $\asymp C_j^2$, respectively, when considering the support of $g(x,C)$ at $C_j=1$; we use the phrase ``might be small'' to describe terms which contain such factors (but only those with $j \in \mathcal{S}$).
For an expression, whether it might be small or not, we compute its ``potential size'' by assuming $x_j \asymp C_j$ and $1-x_j^2 \asymp C_j^2$.

For each choice of $\mathcal{S}$, we iteratively build the Boolean expression from the initial constraints $y_i^* \ll^* 1$ and $C_j \ge 1$ (a conjunction of inequalities).
We first write the phase $\phi(x)$ as a sum of products (we refer to the products as the ``terms'' of $\phi(x)$ and the same for its derivatives) and for each coordinate $x_j$ we apply \cref{cor:BKYLemma} as follows:
In the derivative $C_j \frac{\partial}{\partial x_j} \phi(x)$, either all (a conjunction) of the terms are small, i.e. $\ll^* 1$, or (a disjunction) one of the terms must have the largest potential size and this must be $\ggg^* 1$ (a conjunction).
If the term of the largest potential size might be small, we may draw no further conclusions from \cref{cor:BKYLemma}; otherwise, there must be a second term of the same size (a disjunction on which term), and we may assume that this term is also not one which might be small (since the new term would then have the largest potential size).
The final Boolean expression is then the conjunction of the initial constraints and these Boolean expressions resulting from the conclusions of \cref{cor:BKYLemma} for each derivative.

We then rely on the computer algebra package to reduce the Boolean expression.
The code in \cref{sect:algorithmCode} optimizes the Boolean reduction process by starting with the simplest derivative and working up to the most complex and outputs only the non-trivial cases.

We remark that a minor tweak to the algorithm -- removing the initial conditions $y^*_i \ll^* 1$ and dropping $\psi_I(x^*)$ from the phase -- also proves a strong form (rapid convergence) of the Jacquet-Whittaker Direct Continuation Conjecture of \cite[Sect. 10]{GLnI} for $GL(4)$.
Finally, the algorithm can obviously be applied to $GL(5)$ and higher, but on the author's PC, Mathematica largely fails to simplify the resulting Boolean expression and for the few Weyl elements it succeeds, the expression does not reduce to a small number of cases (except for $w_{n,1}$).

\subsection{Interchange of integrals for $w_{22}$}
The phase is
\begin{align*}
	\phi(x) =& -t_2 x_3-\frac{x_3 x_5}{\sqrt{1+x_3^2}}+\frac{x_2 x_3}{\sqrt{1+x_3^2}}-\frac{y_1^* x_2 x_4}{\sqrt{1+x_4^2}}+\frac{y_3^* x_4 x_5}{\sqrt{1+x_4^2}},
\end{align*}
where
\begin{align*}
	y_1^* :=& \frac{\sqrt{1+x_4^2}\sqrt{1+x_5^2}}{\sqrt{1+x_2^2}\sqrt{1+x_3^2}}, \\
	y_2^* :=& \frac{y_2}{\sqrt{1+x_2^2}\paren{1+x_4^2}\sqrt{1+x_5^2}}, \\
	y_3^* :=& \frac{\sqrt{1+x_2^2}\sqrt{1+x_4^2}}{\sqrt{1+x_3^2}\sqrt{1+x_5^2}},
\end{align*}
and we have removed the term
\begin{align}
\label{eq:w22NegligiblePhase}
	y_2^* x_4 =& \frac{y_2 x_4}{\sqrt{1+x_2^2} \paren{1+x_4^2} \sqrt{1+x_5^2}}
\end{align}
to the weight function.

The output of the algorithm may be summarized in two cases:
\begin{enumerate}[label=\roman*.]
\item When
\[ C_2 \asymp C_4 \asymp C_3 \asymp C_5 \ggg^* 1, \]
we may take
\begin{align*}
	\phi(x) =& -t_2 x_3-\sgn(x_3) x_5+\sgn(x_3) x_2-\frac{\sgn(x_2) x_4 \abs{x_5}}{\abs{x_3}}+\frac{\sgn(x_5) \abs{x_2} x_4}{\abs{x_3}}.
\end{align*}
Considering the $x_4$ integral, we see that it is negligible unless $-\sgn(x_2) \abs{x_5}+\sgn(x_5) \abs{x_2} \ll^* C_2^{-1}$, and we substitute $x_2 \mapsto x_5+u/C_2$.
The $u$ integral is essentially non-oscillatory and we may take
\begin{align*}
	\phi(x) =& -t_2 x_3,
\end{align*}
so that the $x_3$ integral is negligible.

\item When
\[ C_4, C_3 \ll^* 1, \qquad C_2 \asymp C_5 \ggg^* 1, \]
we may take
\begin{align*}
	\phi(x) =& -\frac{x_3 x_5}{\sqrt{1+x_3^2}}+\frac{x_2 x_3}{\sqrt{1+x_3^2}}-\frac{\sgn(x_2) x_4 \abs{x_5}}{\sqrt{1+x_3^2}}+\frac{\sgn(x_5) x_4 \abs{x_2}}{\sqrt{1+x_3^2}}.
\end{align*}

If we look at the $x_2$ integral, we see it is negligible unless $\sgn(x_2) x_3 + \sgn(x_5) x_4 \ll^* C_2^{-1}$.
Substitute $x_4 \mapsto -\sgn(x_2 x_5) x_3+u/C_2$, then again the $u$ integral is essentially non-oscillatory and we may take
\begin{align*}
	\phi(x) =& 0.
\end{align*}
That is, we appear to have exhausted the oscillation.

\end{enumerate}

In the latter case, there doesn't appear to be enough oscillation for the \eqref{eq:IoIBound} or even smoothness in $t$, but we have saved a factor $C_2$ over the trivial bound on the integral of \eqref{eq:IoIBound}, which is enough for the dyadic partition of unity to converge absolutely in a tube domain around $\Re(\tilde{\mu})=0$.
Hence \eqref{eq:KwNaiveDef} converges conditionally, but not rapidly, there and the resulting function is clearly differentiable in $y_2$ (the only term of the phase involving $y_2$ is \eqref{eq:w22NegligiblePhase}), since the derivatives have improved convergence.
One can replace the use of Shalika's local multiplicity one theorem in \cite[Prop. 1]{GLnI} with the solution of the differential equations in \cref{sect:w22DEs} (and the obvious polynomial bound in $y_2$) to achieve the same result.
The one place we lose is the Analytic Continuation Conjecture of \cite[Sect. 4]{GLnI} must be proved by analyzing the functional equations and the solutions of the differential equations in the case $\Lambda \ne 0$.

Please note that we have not \emph{disproved} the Strong Interchange of Integrals Conjecture for this Weyl element, as a deeper analysis might still succeed, but we have given strong evidence that it is false, though we show it remains true for the other Weyl elements.

\subsection{Interchange of integrals for $w_{121}$}
The phase is
\begin{align*}
	\phi(x) =& -t_1 x_1-\frac{t_3 x_2 x_4}{\sqrt{1+x_2^2}}-\frac{t_3 \sqrt{1+x_4^2} x_6}{\sqrt{1+x_2^2}}-\frac{x_1 x_2}{\sqrt{1+x_1^2}}+y_1^* x_5+\frac{y_2^* x_5 x_6}{\sqrt{1+x_5^2}}+\frac{y_3^* x_6 x_4}{\sqrt{1+x_6^2}},
\end{align*}
where
\begin{align*}
	y_1^* :=& \frac{y_1 \sqrt{1+x_1^2}}{\sqrt{1+x_4^2}\paren{1+x_5^2}\sqrt{1+x_6^2}}, \\
	y_2^* :=& \frac{\sqrt{1+x_2^2}\sqrt{1+x_5^2}}{\sqrt{1+x_1^2}\sqrt{1+x_6^2}}, \\
	y_3^* :=& \frac{y_3 \sqrt{1+x_6^2}}{\sqrt{1+x_1^2}\paren{1+x_2^2}\sqrt{1+x_4^2}},
\end{align*}
and we have removed the term
\[ \frac{x_2 y_3}{\sqrt{1+x_1^2} \paren{1+x_2^2}} \]
to the weight function.

The algorithm yields no non-trivial cases.

\subsection{Interchange of integrals for $w_{211}$}
The phase is
\begin{align*}
	\phi(x) =& -t_1 x_1-\frac{t_2 x_1 x_2}{\sqrt{1+x_1^2}}-\frac{t_2 \sqrt{1+x_2^2} x_3}{\sqrt{1+x_1^2}}-\frac{x_2 x_4}{\sqrt{1+x_2^2}}+\frac{y_3 x_2 x_3}{\paren{1+x_1^2} \sqrt{1+x_2^2}} \\
	& \qquad -\frac{y_1^* x_3 x_5}{\sqrt{1+x_5^2}}+y_2^* x_5+\frac{y_3^* x_5 x_4}{\sqrt{1+x_5^2}},
\end{align*}
where
\begin{align*}
	y_1^* :=& \frac{\sqrt{1+x_4^2} \sqrt{1+x_5^2}}{\sqrt{1+x_2^2}\sqrt{1+x_3^2}}, \\
	y_2^* :=& \frac{y_2 \sqrt{1+x_1^2}}{\sqrt{1+x_4^2}\sqrt{1+x_3^2}\paren{1+x_5^2}}, \\
	y_3^* :=& \frac{y_3 \sqrt{1+x_3^2}\sqrt{1+x_5^2}}{\paren{1+x_1^2}\sqrt{1+x_2^2}\sqrt{1+x_4^2}},
\end{align*}
and we have removed the term
\[ \frac{x_1 y_3}{1+x_1^2} \]
to the weight function.

The output of the algorithm may be summarized as
\[ C_1, C_2, C_5 \ll^* 1, \qquad C_4 \asymp C_3 \ggg^* 1. \]
Applying this to the phase, we may take
\begin{align*}
	\phi(x) =& -\frac{t_2 \sqrt{1+x_2^2} x_3}{\sqrt{1+x_1^2}}-\frac{x_2 x_4}{\sqrt{1+x_2^2}}+\frac{y_3 x_2 x_3}{\paren{1+x_1^2} \sqrt{1+x_2^2}} \\
	& \qquad -\frac{\sgn(x_3) \abs{x_4} x_5}{\sqrt{1+x_2^2}}+\frac{\sgn(x_4) y_3 \abs{x_3} x_5}{\paren{1+x_1^2}\sqrt{1+x_2^2}}.
\end{align*}

If we look at the $x_4$ integral, this is small unless $\sgn(x_4) x_2+\sgn(x_3) x_5 \ll^* C_4^{-1}$.
Substitute $x_2 \mapsto -\sgn(x_4 x_3) x_5+u/C_4$, then the $u$ integral is essentially non-oscillatory and we may take
\begin{align*}
	\phi(x) =& -\frac{t_2 \sqrt{1+x_5^2} x_3}{\sqrt{1+x_1^2}},
\end{align*}
but then the $x_3$ integral is negligible.

\subsection{Interchange of integrals for $w_{1111}$}
The phase is
\begin{align*}
	\phi(x) =& -t_1 x_1-\frac{t_2 x_1 x_2}{\sqrt{1+x_1^2}}-\frac{t_2 \sqrt{1+x_2^2} x_3}{\sqrt{1+x_1^2}}-\frac{t_3 x_2 x_4}{\sqrt{1+x_2^2}}-\frac{t_3 \sqrt{1+x_4^2} x_3 x_5}{\sqrt{1+x_2^2} \sqrt{1+x_3^2}} \\
	& \qquad-\frac{t_3 \sqrt{1+x_4^2} \sqrt{1+x_5^2} x_6}{\sqrt{1+x_2^2} \sqrt{1+x_3^2}} + y_1^* x_6+\frac{y_2^* x_6 x_5}{\sqrt{1+x_6^2}}+\frac{y_3^* x_5 x_4}{\sqrt{1+x_5^2}} \\
	& \qquad+\frac{y_2 \sqrt{1+x_1^2} x_3}{\sqrt{1+x_2^2} \paren{1+x_3^2}}+\frac{y_3 x_2 x_3}{\paren{1+x_1^2} \sqrt{1+x_2^2}},
\end{align*}
where
\begin{align*}
	y_1^* :=& \frac{y_1 \sqrt{1+x_2^2} \sqrt{1+x_3^2}}{\sqrt{1+x_4^2}\sqrt{1+x_5^2}\paren{1+x_6^2}}, \\
	y_2^* :=& \frac{y_2 \sqrt{1+x_1^2} \sqrt{1+x_6^2}}{\sqrt{1+x_2^2}\paren{1+x_3^2}\sqrt{1+x_5^2}}, \\
	y_3^* :=& \frac{y_3 \sqrt{1+x_3^2} \sqrt{1+x_5^2}}{\paren{1+x_1^2}\sqrt{1+x_2^2}\sqrt{1+x_4^2}},
\end{align*}
and we have removed the term
\[ \frac{x_1 y_3}{1+x_1^2} \]
to the weight function.

The output of the algorithm may be summarized in two cases:
\begin{enumerate}[label=\roman*.]
\item When
\[ C_1, C_2, C_5, C_6 \ll^* 1, \qquad C_4 \asymp C_3 \ggg^* 1, \]
we may take
\begin{align*}
	\phi(x) =& -\frac{t_2 \sqrt{1+x_2^2} x_3}{\sqrt{1+x_1^2}}-\frac{t_3 x_2 x_4}{\sqrt{1+x_2^2}}-\frac{\sgn(x_3) t_3 \abs{x_4} x_5}{\sqrt{1+x_2^2}} \\
	& \qquad+\frac{\sgn(x_4) y_3 \abs{x_3} x_5}{\paren{1+x_1^2}\sqrt{1+x_2^2}}+\frac{y_3 x_2 x_3}{\paren{1+x_1^2} \sqrt{1+x_2^2}}.
\end{align*}
This case is identical to the main case of $w_{211}$ with $y_1 \mapsto t_3$.

\item When
\[ C_2, C_4, C_3, C_6 \ll^* 1, \qquad C_1 \asymp C_5 \ggg^* 1, \]
we may take
\begin{align*}
	\phi(x) =& -t_1 x_1-\frac{t_3 \sqrt{1+x_4^2} x_3 x_5}{\sqrt{1+x_2^2} \sqrt{1+x_3^2}}-\frac{t_3 \abs{x_5} \sqrt{1+x_4^2} x_6}{\sqrt{1+x_2^2} \sqrt{1+x_3^2}} \\
	& \qquad+\frac{\sgn(x_5) y_2 \abs{x_1} x_6}{\sqrt{1+x_2^2}\paren{1+x_3^2}}+\frac{y_2 \abs{x_1} x_3}{\sqrt{1+x_2^2} \paren{1+x_3^2}}.
\end{align*}
This case is similar to the previous one:
Considering the $x_5$ integral, we see that it is negligible unless $\sgn(x_5)x_3+x_6 \ll^* C_5^{-1}$, and we substitute $x_3 \mapsto -\sgn(x_5)x_6+u/C_5$.
Again, the $u$ integral is essentially non-oscillatory and we may take
\begin{align*}
	\phi(x) =& -t_1 x_1,
\end{align*}
so that the $x_1$ integral is negligible.
\end{enumerate}

\section{Tidying Up}
\label{sect:hiccups}
We hope the combination of the following methods would close the gaps in this differential equations-and-power series method for arbitrary Weyl elements on $GL(n)$.
That is, if there fails to be sufficient convergence, then there will be a corresponding $y$-coordinate for which smoothness and power-series-ness is obvious, while for the remaining coordinates, we can power-series expand by the rapid decay.

\subsection{The $w_{22}$ hiccup}
\label{sect:w22hiccup}
In this section, we prove the Analytic Continuation Conjecture for $K_{22}$.
To that end, we reintroduce the notation $K_{22}(y,\Lambda,\mu,\delta)$.
The proof of the Weak Interchange of Integrals implies that, for each $\Lambda$, $K_{22}(y,\Lambda,\mu,\delta)$ is holomorphic in a tube domain containing $i\mathfrak{a}^*_0(\Lambda)$.
We also know that $K_{22}(y,\Lambda,\mu,\delta)$ satisfies the differential equations of \cref{sect:w22DEs}.
The Asymptotics Theorem tells us that $K_{22}(y,0,\mu,\delta)$ is a particular linear combination of the power series solutions, which gives it an expression which is meromorphic on all of $\mu\in\C^4$.
We need to show then that $K_{22}(y,\Lambda,\mu,\delta)$ agrees with this extension of $K_{22}(y,0,\mu,\delta)$ to $i\mathfrak{a}^*_0(\Lambda)$.

Note that we have two cases to consider:
The first case, call it Case I, is $\Lambda=(\frac{k_1-1}{2},-\frac{k_1-1}{2},0,0)$, $\mu-\Lambda=(r_1,r_1,r_2,r_3)$ with $2 \le k_1 \in \N$, $r\in\C^3$, $2r_1+r_2+r_3=0$ and $\delta_1+\delta_2\equiv k_1 \pmod{2}$.
The second case, call it Case II, is $\Lambda=(\frac{k_1-1}{2},-\frac{k_1-1}{2},\frac{k_2-1}{2},-\frac{k_2-1}{2})$, $\mu-\Lambda=(r_1,r_1,-r_1,-r_1)$ with $2 \le k_2 \le k_1 \in \N$, $r_1\in\C$, $\delta_1+\delta_2\equiv k_1 \pmod{2}$, $\delta_3+\delta_4\equiv k_2 \pmod{2}$.

We have only considered the differential equations for $K_{22}$ in the case $\Lambda=0$, but their extension to $\Lambda\ne0$ follows the typical path for the method of Frobenius.
A set of representatives for $W/W_w$ is given by $\set{I,w_{(1\,3)},w_{(2\,3)},w_{(1\,4)},w_{(2\,4)},w_{(1\,3)(2\,4)}}$.
In Case I, we have $J_{22}(y,\mu^{w_{(1\,3)}})=\sgn(y_2)^{k_1-1} J_{22}(y,\mu^{w_{(2\,3)}})$ and $J_{22}(y,\mu^{w_{(1\,4)}})=\sgn(y_2)^{k_1-1} J_{22}(y,\mu^{w_{(2\,4)}})$, and we can see the remaining two solutions are
\begin{align*}
	Y_{22,1}(y,\mu) :=& \lim_{\mu_1-\mu_2 \to k_1-1} \frac{J_{22}(y,\mu^{w_{(1\,3)}})-\sgn(y_2)^{k_1-1} J_{22}(y,\mu^{w_{(2\,3)}})}{\sin \pi(\mu_1-\mu_2)}, \\
	Y_{22,2}(y,\mu) :=& \lim_{\mu_1-\mu_2 \to k_1-1} \frac{J_{22}(y,\mu^{w_{(1\,4)}})-\sgn(y_2)^{k_1-1} J_{22}(y,\mu^{w_{(2\,4)}})}{\sin \pi(\mu_1-\mu_2)}.
\end{align*}
In Case II, we furthermore have $J_{22}(y,\mu^{w_{(2\,4)}})=\sgn(y_2)^{k_2-1} J_{22}(y,\mu^{w_{(2\,3)}})$ and $Y_{22,2}(y,\mu) = \sgn(y_2)^{k_2-1} Y_{22,1}(y,\mu)$, and the remaining solutions are
\begin{align*}
	Y_{22,3}(y,\mu) :=& \lim_{\mu_3-\mu_4 \to k_2-1} \frac{J_{22}(y,\mu^{w_{(2\,4)}})-\sgn(y_2)^{k_2-1} J_{22}(y,\mu^{w_{(2\,3)}})}{\sin \pi(\mu_3-\mu_4)}, \\
	Y_{22,4}(y,\mu) :=& \lim_{\mu_3-\mu_4 \to k_2-1} \frac{Y_{22,2}(y,\mu)-\sgn(y_2)^{k_2-1} Y_{22,1}(y,\mu)}{\sin \pi(\mu_3-\mu_4)}.
\end{align*}
One could conceive that further degeneracy of the $J_{22}$ functions occurs at $d_1=d_2$ or $d_1=2d_2$, but it does not.

Note: If the reader is unsatisfied with the limit notation above, set, e.g., $\mu_z := (\frac{z}{2}+r_1,-\frac{z}{2}+r_1,r_2,r_3)$ and we mean
\begin{align*}
	&Y_{22,1}(y,(\tfrac{k_1-1}{2}+r_1,-\tfrac{k_1-1}{2}+r_1,r_2,r_3)) \\
	& \qquad = (-1)^{k_1-1} \left. \frac{\partial}{\partial z} \paren{J_{22}(y,\mu_z^{w_{(1\,3)}})-\sgn(y_2)^{k_1-1} J_{22}(y,\mu_z^{w_{(2\,3)}})} \right|_{z=k_1-1}.
\end{align*}

So we have verified the space of solutions to the $w_{22}$ differential equation is spanned by the Frobenius series solutions.
The analysis of the Mellin-Barnes integrals for the Whittaker function in \cite[Sect. 9.2]{GLnI} shows that the Whittaker functions are also linear combinations Frobenius series with leading terms $I_{\mu^w,\delta^w}$.
In particular, this holds at $\Re(\mu)=0$ and for the Whittaker function, the higher-weight case $\Re(\mu)=\Lambda \ne 0$ is, in fact, obtained by analytic continuation.
In combination with \cite[eq. (52)]{GLnI}, this implies the result, as in the proof of the Asymptotics Theorem in \cite[Sect. 11]{GLnI}.
The reader should be concerned about convergence of the $\wbar{U}_{22}(\R)$ integral, but actually, we may safely expand both $\e{y_2^* x_4}$ and the Whittaker function (considered as a function of $y_2^*$ alone) into power/Frobenius series because the higher terms actually make the $\wbar{U}_{22}(\R)$ integral converge faster and we only need the first term of each series.

\subsection{The $w_{211}$ hiccup}
\label{sect:w211hiccup}

Though it would be nice to prove something like Hashizume's results \cite{Hashi} for Bessel functions of arbitrary Weyl elements on $GL(n)$, the author was unable to force through any ellipticity arguments, and we now give a concrete method through which one may argue that the Bessel function lies in the span of the Frobenius series solutions for the Weyl element $w_{211}$.

First, note that $K_{211}(y,\mu,\delta)$ is a scalar-valued function satisfying the differential equations \eqref{eq:211DEs1} and \eqref{eq:211DEs2}, and we can only talk about the Bessel functions on representations/at $K$-types for which the Whittaker function is not identically zero.
If (the matrix-valued Whittaker function) $W_\sigma(t,\mu,\delta)$ is not identically zero take $i,j$ so that the entry $W_{\sigma,i,j}(t,\mu,\delta)$ is not identically zero.
Then the Mellin-Barnes integral \cite[eq. (49)]{GLnI} implies a rapidly convergent Frobenius series expansion which implies in turn holomorphy in $t$, so we can take $t$ such that the $Y$-coordinates of $w_{211} t w_{211}^{-1}$ are arbitrarily small and still $W_{\sigma,i,j}(t,\mu,\delta) \ne 0$, and we get
\begin{align*}
	K_{211}(y,\mu,\delta) =& \frac{I_{0,\delta}(yw_{211}tw_{211})}{W_{\sigma,i,j}(t,\mu,\delta)} \sum_{k=1}^4 \int_{\Re(s)=\epsilon} \what{W}_{\sigma,i,k}(s,\mu,\delta) F_{\sigma,k,j}(yw_{211}tw_{211},t,s,\mu,\delta) \\
	& \qquad \times \prod_{\ell=1}^3 (yw_{211}tw_{211})_\ell^{-s_\ell} \frac{ds}{(2\pi i)^3},
\end{align*}
where
\[ F_{\sigma,k,j}(y,t,s,\mu,\delta) = \int_{\wbar{U}_{211}(\R)} \wbar{\psi_t(u)} \psi_{yw_{211}tw_{211}^{-1}}(x^*) \sigma_{k,j}(k^*) I_{0,\delta}(y^*) \prod_{\ell=1}^3 (y^*_\ell)^{-s_\ell} du, \quad x^* y^* k^* = wu, \]
formally, but also actually if we were to hold the $u$ integral outside and include the $\mu$ integral between the $u$ and $s$ integrals.

We will derive a Mellin expansion of $F_{\sigma,k,j}$, justified by the absolute convergence of the original $u-\mu-s$ integral and the rapid convergence of \eqref{eq:PsiThetaInvMellin} for $\theta < \frac{\pi}{2}$, that can be used to shift the contours (in the Mellin expansion of $K_{211}$) to $-\infty$.
Without worrying about the particular locations of the poles of $\what{W}_{\sigma,i,k}$ and $\what{F}_{\sigma,k,j}$, it's enough to know that they occur along finitely many horizontal lines in each coordinate, so the contour shifting produces some linear combination of Frobenius series.

Now $\sigma_{k,j}$ is a trigonometric polynomial in the hyper-spherical coordinates (see \cite[Sect. 6.1]{GLnI}), so after applying the Iwasawa decomposition of \cref{sect:Iwasawa} and expanding, we see that $F_{\sigma,k,j}(y,t,s,\mu,\delta)$ is a linear combination of terms
\begin{align*}
	& F_m^*(y,t,u) :=\\
	& \int_{\R^5} e\Biggl(-t_1 x_1-\frac{t_2 x_1 x_2}{\sqrt{1+x_1^2}}-\frac{t_2 x_3 \sqrt{1+x_2^2}}{\sqrt{1+x_1^2}}-\frac{y_1 x_2 x_4}{\sqrt{1+x_2^2}}+\frac{y_3 x_2 x_3}{\paren{1+x_1^2} \sqrt{1+x_2^2}} -\frac{y_1 x_3 x_5 \sqrt{1+x_4^2}}{\sqrt{1+x_2^2}\sqrt{1+x_3^2}}\\
	& \qquad +\frac{y_2 x_5 \sqrt{1+x_1^2}}{\sqrt{1+x_4^2}\sqrt{1+x_3^2}\paren{1+x_5^2}} +\frac{y_3 x_4 x_5 \sqrt{1+x_3^2}}{\paren{1+x_1^2}\sqrt{1+x_2^2}\sqrt{1+x_4^2}}+\frac{x_1 y_3}{1+x_1^2}\Biggr) \\
	& \qquad \times \prod_{\ell=1}^5 \paren{1+x_\ell^2}^{\frac{-1+u_\ell}{2}}\paren[m_\ell]{\frac{1+i x_\ell}{\sqrt{1+x_\ell^2}}} dx,
\end{align*}
for some $m \in \Z^5$, where
\begin{align*}
	u_1=& -s_2+2s_3-\mu_3+\mu_4,& u_2=&s_1+s_3-\mu_1+\mu_4,& u_3 =&s_1+s_2-s_3-\mu_1+\mu_3, \\
	u_4=& -s_1+s_2+s_3-\mu_2+\mu_4,& u_5=&-s_1+2s_2-s_3-\mu_2+\mu_3.
\end{align*}

Now inside $F_m^*$, we Mellin expand all nine exponential terms and evaluate the $x$ integrals using \cite[eqs. (2.26)-(2.29)]{HWI}
\begin{align*}
	&\int_{-\infty}^{\infty} \chi_{a-1}^\delta(x) \paren{1+x^2}^{-\frac{b+a}{2}} \paren[m]{\frac{1+i x}{\sqrt{1+x^2}}}dx \\
	&=(-\sgn(m) i)^\delta \sum_{j=0}^{(\abs{m}-\delta)/2} \binom{\abs{m}}{2j+\delta} (-1)^j B\paren{\frac{a+2j+\delta}{2},\frac{b+\abs{m}-2j-\delta}{2}},
\end{align*}
we see that $F_m^*$ is a linear combination of functions of the form
\begin{align*}
	F_{m,\eta,\eta'}^\dagger(y,t,u) :=& \int_{\Re(v)=\epsilon} \chi_{-v_1}^{\eta_1'}(y_2) \chi_{-v_2-v_3-v_4}^{\eta_2'+\eta_3'+\eta_4'}(y_3) F_{m,\eta}^\sharp(y,t,q)) \prod_{j=1}^4 G_{\eta_j'}(v_j) \frac{dv}{(2\pi i)^4},
\end{align*}
\begin{align*}
	F_\eta^\sharp(y,t,q) :=& \int_{\Re(r)=\epsilon} \chi_{-r_1}^{\eta_1}(t_1) \chi_{-r_2-r_3}^{\eta_2+\eta_3}(t_2) \chi_{-r_4-r_5}^{\eta_4+\eta_5}(y_1) \prod_{j=1}^5 G_{\eta_j}(r_j) B\paren{\frac{q_1-r_1-r_2}{2},\frac{q_2+r_1-r_3}{2}} \\
	& \qquad B\paren{\frac{q_3-r_2-r_4}{2},\frac{q_4+r_2+r_3-r_5}{2}} B\paren{\frac{q_5-r_3-r_5}{2},\frac{q_6+r_3}{2}} \\
	& \qquad B\paren{\frac{q_7-r_4}{2},\frac{q_8+r_4+r_5}{2}} B\paren{\frac{q_9-r_5}{2},\frac{q_{10}+r_5}{2}} \frac{dr}{(2\pi i)^5}
\end{align*}
where $\eta \in \Z^5,\eta'\in\Z^4$,
\begin{align*}
	q_1 =& 1+m_{11}-v_2,& q_2=& m_{12}+v_1-v_2-2v_3-2v_4, \\
	q_3 =& 1+m_{21}-v_3& q_4=& m_{22}-u_2-v_4,\\
	q_5 =& 1+m_{31}-v_3& q_6=& m_{32}-u_3-v_1+v_3+v_4,\\
	q_7 =& 1+m_{41}-v_4& q_8=& m_{42}-u_4-v_1,\\
	q_9 =& 1+m_{51}-v_1-v_4& q_{10}=& m_{52}-u_5-v_1+v_4,
\end{align*}
and each $m_{\ell,1},m_{\ell,2}\ge0$, $m_{\ell,1}+m_{\ell,2}=\abs{m_\ell}$.

Recall the terminology following \eqref{eq:Stirling}.
Away from its poles, $F_\eta^\sharp$ has exponential convergence due to the exponential decay of the beta functions $B(a-r,b+r)$ in the region $\abs{\Im(r-\frac{a-b}{2})} < \abs{\Im(\frac{a+b}{2})}$.
Shifting the $r$ contours negative increases the power of $y_1$ and localizes the integral near $\Im(r)\approx 0$, so that $F_\eta^\sharp$ is essentially bounded by the polynomial part of the beta functions at $\Im(r)=0$, plus the residues picked up from the beta functions, which are also polynomial in $q$ and for which an analysis similar to the following applies.

For $\Re(r)$ highly negative and $\Im(r) \approx 0$, the polynomial parts of the beta functions want to localize the $v$ integral near $\Im(q_{2j})=0$, $j=1,\ldots,5$, while the odd indexed $q$ variables occur to a high power.
Treating $u$ as fixed for the moment, the combined effect, including the decay of the factors $G_{\eta_j'}(v_j)$ for $\Re(v_j) < \frac{1}{2}$ is to localize near $\Im(v) \approx 0$.

Lastly, the $s$ integrals have exponential convergence, coming from the exponential decay of $\what{W}_{\sigma,i,k}$ in each $\abs{\Im(s_j)}\gg1$, treating $\mu$ as fixed, so our earlier assumption that $s$ is essentially fixed is valid (as there is no exponential growth coming from $F_{m,\eta,\eta'}^\dagger$).

So we may shift the $s$, $v$ and $r$ contours to the left.
The factorial parts of the gamma functions in $F_\eta^\sharp$ and $F_{m,\eta,\eta'}^\dagger$ are either balanced (some of the beta functions) or induce factorial-type decay (the other beta functions and all of the $G_\delta$ functions), and the factorial part of $\what{W}_{\sigma,i,k}$ always induces factorial-type decay.
Hence we may shift the $s$, $v$ and $r$ contours to $-\infty$ and conclude that $K_{211}(y,\mu,\delta)$ is a sum of Frobenius series.

\appendix
\section{Mathematica Code for The Interchange of Integrals Algorithm}
\label{sect:algorithmCode}
Here we provide the code to implement the algorithm of \cref{sect:algorithm} as described in the text; the code has been formatted for direct copy-and-paste, aside from the page breaks.

The functions xforms, formIndex, logFormExponent, exprToIndices, phiToIndices, indicesToPhi are all concerned with converting human-readable input to the format required by the algorithm and vice versa.
The algorithm itself is comprised of initialCase, applyBKYLemma, listAllCases with supporting functions logCoefs, dervLogCoefs, exprDerv, doDerv, intyStarToLogs, yStarToLogs, hasIntersection, intMightBeSmall, intMightBeSmallDerv, mightBeSmall, and smallsSub.
The function doReduceAll optimizes the reduction of the Boolean expression with helpers indexFrequencies, doSimplify, toOrList and doReduce.
Finally, DoTest packages the algorithm into a simple format with assistance from properSubsets, testAllCases, printAllCases and makeVars.\\[0.15in]

\begingroup
\ttfamily
\setlength{\parindent}{0pt}

(* forms of the factors of the terms in the phase *)\\
xforms[x\char`_] := \{x/(1 + x\^{}2), 1/(1 + x\^{}2), Sqrt[1 + x\^{}2], \mlbb 1/Sqrt[1 + x\^{}2], x/Sqrt[1 + x\^{}2], x, 1\};\\
(* exponent of Cij in potential size of xforms *)\\
logCoefs = \{-1, -2, 1, -1, 0, 1, 0\}; \\
 (* exponent of Cij in potential size of the derivative of xforms, times Cij *)\\
dervLogCoefs = \{-2, -3, 0, -2, -3, 0, 0\} + 1;\\
(* differentiate a term and take its logarithm, asymptotically in the Cij *)\\
exprDerv[expr\char`_, var\char`_, logvars\char`_] := 
 Module[\{i\}, 
  If[expr[[var]] == 7, 0, \mlb
   Sum[If[i == var, dervLogCoefs[[expr[[i]]]], 
      logCoefs[[expr[[i]]]]] logvars[[i]], \mlbb \{i, 1, Length[logvars]\}]]]\\
doDerv[exprs\char`_, var\char`_, logvars\char`_] := 
 Map[exprDerv[\#, var, logvars] \&, exprs]\\
(* convert a term of phi to indices *)\\
formIndex[expr\char`_, var\char`_] := Module[\{forms = xforms[var]\},\mlb
  Piecewise[\{\mlbb
    \{1, ! FreeQ[expr, forms[[1]]]\},\mlbb
    \{2, ! FreeQ[expr, forms[[2]]]\},\mlbb
    \{5, ! FreeQ[expr, forms[[5]]]\},\mlbb
    \{4, ! FreeQ[expr, forms[[4]]]\},\mlbb
    \{3, ! FreeQ[expr, forms[[3]]]\},\mlbb
    \{6, ! FreeQ[expr, forms[[6]]]\},\mlbb
    \{7, True\}\mlbb
    \}]]\\
(* convert (1+xij\^{}2)\^{}(a/2) ~ Cij\^{}a -\textgreater a *)\\
logFormExponent[expr\char`_, x\char`_] :=\mlb
 Piecewise[\{\mlbb
   \{-2, ! FreeQ[expr, 1/(1 + x\^{}2)]\},\mlbb
   \{-1, ! FreeQ[expr, 1/Sqrt[1 + x\^{}2]]\},\mlbb
   \{1, ! FreeQ[expr, Sqrt[1 + x\^{}2]]\},\mlbb
   \{0, True\}\mlbb
   \}]\\
exprToIndices[expr\char`_, vars\char`_] := Map[formIndex[expr, \#] \&, vars]\\
phiToIndices[phi\char`_, vars\char`_, yStars\char`_] := 
 Map[exprToIndices[\#, vars] \&, 
  phi /. \mlb Table[
    Symbol["ys" \textless\textgreater ToString[i]] -\textgreater yStars[[i]], \{i, 1, 
     Length[yStars]\}]]\\
indicesToPhi[phiIndices\char`_, vars\char`_] := 
 Module[\{i, j\}, 
  Table[\mlb Product[xforms[vars[[j]]][[phiIndices[[i, j]]]], \{j, 1, 
     Length[vars]\}], \mlb \{i, 1, Length[phiIndices]\}]]\\
indicesToPhi[phiIndices\char`_, coefs\char`_, vars\char`_] := 
 coefs indicesToPhi[phiIndices, vars]\\
intyStarToLogs[yStar\char`_, vars\char`_, logvars\char`_] := 
 logvars.Map[\mlb logFormExponent[yStar, \#] \&, vars]\\
yStarToLogs[yStar\char`_, vars\char`_, logvars\char`_] := 
 Map[intyStarToLogs[\#, vars, logvars] \&, \mlb yStar]\\
indexFrequencies[phiIndices\char`_] := 
 Module[\{i\}, 
  Table[Length[DeleteCases[\mlb phiIndices[[All, i]], 7]], \{i, 1, 
    Length[phiIndices[[1]]]\}]]\\
properSubsets[set\char`_] := DeleteCases[Subsets[set], set]\\
hasIntersection[A\char`_, B\char`_] := ! AllTrue[A, FreeQ[B, \#] \&]\\
(* MightBeSmall checks if an expression has a factor, e.g. xij, which might be \mlb small when Cij==1; smalls is the list of such Cij *)\\
intMightBeSmall[expr\char`_, smalls\char`_] := 
 hasIntersection[\{1, 5, 6\}, Part[expr, smalls]]\\
intMightBeSmallDerv[expr\char`_, derv\char`_] := 
 hasIntersection[\{1, 2, 3, 4\}, expr[[derv]]]\\
mightBeSmall[expr\char`_, smalls\char`_, derv\char`_] := 
 intMightBeSmall[expr, DeleteCases[smalls, \mlb derv]] || 
  If[FreeQ[smalls, derv], False, intMightBeSmallDerv[expr, derv]]\\
smallsSub[logvars\char`_, smalls\char`_] := Map[logvars[[\#]] -\textgreater 0 \&, smalls]\\
initialCase[logystars\char`_, logvars\char`_, smalls\char`_] :=
 Simplify[\mlb
  AllTrue[logystars, \# \textless= 0 \&] \&\&\mlb AllTrue[logvars, \# \textgreater= 0 \&] /. smallsSub[logvars, smalls]]\\
applyBKYLemma[exprs\char`_, var\char`_, logvars\char`_, smalls\char`_] := Module[\{\mlb
   smallDerv = DeleteCases[doDerv[Select[exprs, \mlbb mightBeSmall[\#, smalls, var] \&], var, logvars], 0],\mlb
   largeDerv = DeleteCases[doDerv[Select[exprs, \mlbb !mightBeSmall[\#, smalls, var] \&], var, logvars], 0],\mlb
   i, j\},\mlb
  Join[\mlbb
    Flatten[Table[largeDerv[[i]] == largeDerv[[j]] \&\& \mlbbb
       AllTrue[Delete[largeDerv, \{\{i\}, \{j\}\}], \# \textless= largeDerv[[i]] \&] \&\&\mlbbb 
        largeDerv[[i]] \textgreater 0, \{i, 1, Length[largeDerv] - 1\}, \mlbbb \{j, i + 1, Length[largeDerv]\}]],\mlbb
    Table[AllTrue[largeDerv, \# \textless= smallDerv[[i]] \&] \&\& smallDerv[[i]] \textgreater 0, \mlbbb \{i, 1, Length[smallDerv]\}],\mlbb 
    \{AllTrue[Join[largeDerv, smallDerv], \# \textless= 0 \&]\}] \mlb/. smallsSub[logvars, smalls]]\\
(* doSimplify, doReduce and doReduceAll are just a little bit smarter simplification \mlbb than the Mathematica default *)\\
doSimplify[expr\char`_, logvars\char`_] := Simplify[expr, Element[logvars, Reals]]\\
listAllCases[exprsList\char`_, logystars\char`_, logvars\char`_, smalls\char`_] := 
 Module[\mlb \{exprOrder = Ordering[indexFrequencies[exprsList]]\},\mlb
  doSimplify[Prepend[Map[applyBKYLemma[exprsList, \#, logvars, smalls] \&, \mlbb exprOrder], initialCase[logystars, logvars, smalls]], logvars]]\\
(* Convert a Boolean expression to sum of products and then to a list *)\\
toOrList[expr\char`_] := 
 Module[\{temp = LogicalExpand[expr]\}, \mlb
  If[Head[temp] === Or, List @@ temp, \{temp\}]]\\
doReduce[cur\char`_, next\char`_, logvars\char`_] := 
 doSimplify[Reduce[Or @@ Map[Reduce, \mlb toOrList[cur \&\& next]]], logvars]\\
doReduceAll[allCases\char`_, logvars\char`_, smalls\char`_] := 
 Module[\{retval = allCases[[1]], i\},\mlb
  Do[retval = doReduce[retval, Or @@ (allCases[[i]]), logvars],\mlbb \{i, 2, Length[allCases]\}];\mlb
  (* Only show non-trivial cases *)\mlb
  doSimplify[retval \&\& (AnyTrue[logvars, \# \textgreater 0 \&] /. \mlbb smallsSub[logvars, smalls]), logvars]]\\
(* Some code to help run the algorithm *)\\
testAllCases[exprsList\char`_, logystars\char`_, logvars\char`_, cvars\char`_] := 
 ParallelTable[\{\mlbb Map[cvars[[\#]] == 1 \&, smalls], \mlbb 
   doReduceAll[listAllCases[exprsList, logystars, logvars, smalls], logvars, \mlbb smalls]\}, \mlb \{smalls, properSubsets[Range[Length[logvars]]]\}, \mlb
  Method -\textgreater "ItemsPerEvaluation" -\textgreater 1]\\
printAllCases[
  exprs\char`_] := (Do[If[!(expr[[2]] === False), Print[expr]],\mlb \{expr, exprs\}]; Length[exprs])\\
makeVars[prefix\char`_, vars\char`_] := Map[Symbol, Map[prefix \textless\textgreater \# \&, vars]]\\
DoTest[phi\char`_, ystar\char`_, vars\char`_] := 
 Module[\mlb\{xvars, logvars, cvars, logystars, PhiA, PhiB\},\mlb
  xvars = makeVars["x", vars];\mlb
  logvars = makeVars["logC", vars];\mlb
  cvars = makeVars["C", vars];\mlb
  logystars = yStarToLogs[ystar, xvars, logvars];\mlb
  PhiA = phiToIndices[List @@ phi, xvars, ystar];\mlb
  Print[Simplify[(List @@ phi)/indicesToPhi[PhiA, xvars]]];\mlb
  Print[AbsoluteTiming[
    PhiB = testAllCases[PhiA, logystars, logvars, cvars];]];\mlb
  Print[printAllCases[PhiB]];\mlb
  PhiB]\\

(* A sample application to the GL(4) long Weyl element *)\\
Phi1111b = DoTest[\mlb
  -t1 x12 - (t2 x12 x13)/Sqrt[1 + x12\^{}2] - (t3 x13 x14)/Sqrt[1 + x13\^{}2] \mlbb - (t2 Sqrt[1 + x13\^{}2] x23)/Sqrt[1 + x12\^{}2] \mlbb - (t3 Sqrt[1 + x14\^{}2] x23 x24)/(Sqrt[1 + x13\^{}2] Sqrt[1 + x23\^{}2]) \mlbb - (t3 Sqrt[1 + x14\^{}2] Sqrt[1 + x24\^{}2] x34)/(Sqrt[1 + x13\^{}2] Sqrt[1 + x23\^{}2]) \mlbb + (Sqrt[1 + x13\^{}2] Sqrt[1 + x23\^{}2] x34 y1)/\mlbbb (Sqrt[1 + x14\^{}2] Sqrt[1 + x24\^{}2] (1 + x34\^{}2)) \mlbb + (Sqrt[1 + x12\^{}2] x23 y2)/(Sqrt[1 + x13\^{}2] (1 + x23\^{}2)) \mlbb + (Sqrt[1 + x12\^{}2] x24 x34 y2)/(Sqrt[1 + x13\^{}2] (1 + x23\^{}2) Sqrt[1 + x24\^{}2]) \mlbb + (x13 x23 y3)/((1 + x12\^{}2) Sqrt[1 + x13\^{}2]) \mlbb + (x14 Sqrt[1 + x23\^{}2] x24 y3)/((1 + x12\^{}2) Sqrt[1 + x13\^{}2] Sqrt[1 + x14\^{}2]),\mlb
  \{(Sqrt[1 + x13\^{}2] Sqrt[1 + x23\^{}2] y1)/\mlbbb (Sqrt[1 + x14\^{}2] Sqrt[1 + x24\^{}2] (1 + x34\^{}2)), \mlbb (Sqrt[1 + x12\^{}2] Sqrt[1 + x34\^{}2] y2)/\mlbbb (Sqrt[1 + x13\^{}2] (1 + x23\^{}2) Sqrt[1 + x24\^{}2]), \mlbb (Sqrt[1 + x23\^{}2] Sqrt[1 + x24\^{}2] y3)/\mlbbb ((1 + x12\^{}2) Sqrt[1 + x13\^{}2] Sqrt[1 + x14\^{}2])\},\mlb
  \{"12", "13", "14", "23", "24", "34"\}];\\
LogicalExpand[FullSimplify[Phi1111b[[1, 2]]]]

\endgroup

\bibliographystyle{amsplain}

\bibliography{HigherWeight}

\end{document}